\documentclass[11pt,letterpaper]{amsart}

\usepackage{amsmath}
\usepackage{amsthm}
\usepackage{amssymb}
\usepackage{amscd}
\usepackage{amsfonts}
\usepackage{amsbsy}
\usepackage[noadjust]{cite} 
\usepackage{epsfig,afterpage}
\usepackage{paralist}
\usepackage{psfrag}
\usepackage{mathrsfs}       
\usepackage{verbatim}
\usepackage{color}


\newtheorem {theorem} {Theorem}
\newtheorem {definition} [theorem]{Definition}
\newtheorem {proposition} [theorem]{Proposition}
\newtheorem {corollary} [theorem]{Corollary}
\newtheorem {lemma}  [theorem]{Lemma}
\newtheorem {remark} [theorem]{\sc Remark}

\title[Characterization of centers]{Characterization of centers by its complex separatrices}

\thanks{The authors are partially supported by the Agencia Estatal de Investigaci\'on grant number PID2020-113758GB-I00 and an AGAUR grant number 2021SGR-01618.}

\author[Isaac A. Garc\'ia and Jaume Gin\'e]{Isaac A. Garc\'ia and Jaume Gin\'e}

\address{Departament de Matem\`atica, Universitat de Lleida,
Avda. Jaume II, 69, 25001 Lleida, Spain}

\email{isaac.garcia@udl.cat}
\email{jaume.gine@udl.cat}

\subjclass[2000]{34C05, 34A05, 37G10, 37G15}

\keywords{Center-focus problem, periodic orbit, degenerate singularity}

\begin{document}

\begin{abstract}
In this work we deal with analytic families of real planar vector fields $\mathcal{X}_\lambda$ having a monodromic singularity at the origin for any $\lambda \in \Lambda \subset \mathbb{R}^p$ and depending analytically on the parameters $\lambda$. There naturally appears the so-called center-focus problem which consists in describing the partition of $\Lambda$ induced by the centers and the foci at the origin. We give a characterization of the centers (degenerated or not) in terms of a specific integral of the cofactor associated to a real invariant analytic curve passing through the singularity, which always exists. Several consequences and applications are also stated.
\end{abstract}

\maketitle

\section{Introduction}

In this work we study families of real analytic planar autonomous differential systems
\begin{equation}\label{DCC-1}
\dot{x}= P(x,y; \lambda),  \ \ \dot{y}=Q(x,y; \lambda),
\end{equation}
where the dot denotes, as usual, derivative with respect to an independent time-variable and depending analytically on the parameters $\lambda \in \mathbb{R}^p$ of the family. Along the work we denote by $\mathcal{X} = P(x,y; \lambda) \partial_x + Q(x,y; \lambda) \partial_y$ the vector field associated to \eqref{DCC-1}. We are only interested in the restriction of family \eqref{DCC-1} to the monodromic parameter space $\Lambda \subset \mathbb{R}^p$ which is defined as the parameter subset for which $(x,y)=(0,0)$ is a {\it monodromic singularity}. In other words, the origin is a singular point of \eqref{DCC-1} where the structure of local flow turns around it for any $\lambda \in \Lambda$. The monodromic set $\Lambda$ is usually characterized by the blow-up process developed by Dumortier in \cite{Dum}, see also Arnold \cite{Ar}. In the papers \cite{AGR,AGR2} Algaba and co-authors present an algorithmic procedure that allows to specify the parameter conditions that defines $\Lambda$ in terms of the Newton diagram $\mathbf{N}(\mathcal{X})$ of the vector field $\mathcal{X}$, see also \cite{GGGrau}.

In this monodromic setting, I'lyashenko \cite{Ilyashenko} and \'Ecalle \cite{Ecalle} independently prove that the nature of the singularity at the origin only can be either a center or a focus due to the analyticity of  $\mathcal{X}$. A {\it center} is a singularity having a punctured neighborhood foliated by periodic orbits of $\mathcal{X}$ whereas in a {\it focus} the orbits spiral around it. Therefore there is induced a natural partition of $\Lambda$ into subsets according to these two possibilities and to characterize such partition is called the {\it center-focus problem} at the origin of $\mathcal{X}$.

There are several degrees of degeneracy in the center-focus problem according to the nature of the linear part of $\mathcal{X}$ at $(0,0)$. Thus in the {\it non-degenerate} monodromic singular points that have the eigenvalues of the Jacobian matrix $D \mathcal{X}(0,0)$ of $\mathcal{X}$ at $(0,0)$ given by non-zero pure imaginary eigenvalues, the solution of the center-focus problem goes back to the seminal works of Poincar\'e and Lyapunov, see for example the book \cite{RS} for a modern point of view. The next degree of degeneration appears when $D \mathcal{X}(0,0) \not\equiv 0$ but both eigenvalues are zero and this {\it nilpotent} center-focus problem was solved by Moussu \cite{Mo}. We refer to the {\it degenerate} center-focus problem in case that $\mathcal{X}$ has no linear part at $(0,0)$, that is, $D \mathcal{X}(0,0) \equiv 0$. There is no universal technique that discerns between a degenerated center or focus giving the stability of the origin although some partial results are presented under certain restrictions on $\mathcal{X}$, see for example \cite{Ga-Li-Ma-Ma,GaGi1,GGL,G,G1}. We are interested in this challenging open problem and this work is devoted to give a characterization of any type of center without any additional restriction.

The singularity at the origin can be desingularized by the blow-up procedure which transforms the singularity into a monodromic polycycle $\Gamma$. In this way $\Gamma$ has a Poincar\'e return map $\Pi: \Sigma \subset (\mathbb{R}^+, 0) \to (\mathbb{R}^+, 0)$ on one side of $\Gamma$. In the work \cite{Ilyashenko} Il'Yashenko shows that, in general, $\Pi$ is no longer differentiable at the origin but it is a semiregular map. Consequently  $\Pi$ has a Dulac asymptotic expansion possessing a linear leading term and $\Pi$ reduces to the identity map when the origin is a center of $\mathcal{X}$.

The simplest nature of $\Gamma$ corresponds to a periodic orbit, and this always happens when the monodromic singularity is non-degenerated. We can also reach this simple case for nilpotent singularities. Indeed there are also some degenerated monodromic singularities, called in this work the monodromic class ${\rm Mo}^{(p,q)}$, with associated periodic orbit $\Gamma$ because there are no characteristic directions after the first (weighted) polar blow-up. See the details in the forthcoming Definition \ref{Mo-pq}. A characteristic of the class ${\rm Mo}^{(p,q)}$ is that the Poincar\'e map $\Pi$ of $\Gamma$ is analytic which means that one can use the so-called Bautin method consisting in the classical procedure proposed by Poincar\'e and Lyapunov to approach the solution of the non-degenerated center-focus with minor modifications. The reader may consult for instance \cite{G0} and references therein.

In the work \cite{Me} of Medvedeva it is proved that, thanks to the analyticity of $\mathcal{X}_\lambda$ in $\lambda$, $\Pi$ has a Dulac asymptotic expansion of the form
\begin{equation}\label{Poinc-expan}
\Pi(x) = \eta_1 x + \sum_j P_j(\log x) x^{\nu_j},
\end{equation}
where $\eta_1 > 0$, the exponents $\nu_j > 1$ grow to infinity and the coefficients of the $P_j$ are polynomials whose coefficients depend analytically on the coefficients of $\mathcal{X}$. Shorting notation we have $\Pi(x) =
\eta_1 x + o(x)$ and the first (generalized) Poincar\'e-Lyapunov quantity $\eta_1$ can be computed following Medvedeva's work \cite{Me-2}. It is worth to mention here the works \cite{Ga-Li-Ma-Ma,Ga-Ma-Ma} that impose explicit constrains on $\mathcal{X}$ that allow to compute $\eta_1$, see also \cite{AGG3} and references therein.

The work is structured as follows. The main results are stated in section \ref{S1}. Section \ref{SS3} is dedicated first to the proof of Theorem \ref{teo-F-exists} and next to summarize well-known results on real analytic curves combined with a study of some relations arising between real analytic invariant curves of $\mathcal{X}$ and the invariant branches of $\mathcal{X}$. The weighted polar blow-up and its associated $(p,q)$--{\it characteristic directions} are explained in section \ref{S3}. In section \ref{S6} we prove Theorem \ref{Th-centers-pq-period} and Corollary \ref{Corol-semidef}. In section \ref{S7} we focus our attention on the vector fields in the monodromic class ${\rm Mo}^{(p,q)}$ and prove Theorem \ref{Th-centers-Mopq}. Later on in section \ref{S8} we present some nontrivial examples illustrating how the theory works. Finally in the last section \ref{S10} we show the big difficulty in dealing with the stability problem of degenerate monodromic singularities, namely to know the dominant term (in some sense) of the flow near the associated polycycle.

\section{Statement of the main results} \label{S1}

Hereinafter we omit the dependency on the parameters $\lambda$ in all the formulas except when it is relevant.
\newline

Now we state the first main result of the work whose proof, based on Camacho-Sad separatrix theorem, is given in section \ref{S-PT1}.

\begin{theorem}\label{teo-F-exists}
Let $\mathcal{X}$ be real analytic planar vector field having a monodromic singular point at the origin. Then there exists a real analytic invariant curve $F^{\mathbb{R}}(x,y)=0$ of $\mathcal{X}$ with $F^{\mathbb{R}}(0,0)=0$ and $F^{\mathbb{R}}$ having an isolated zero in $\mathbb{R}^2$ at the origin.
\end{theorem}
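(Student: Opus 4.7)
The plan is to leverage the Camacho--Sad separatrix theorem in the complex analytic setting and then descend back to a real invariant curve using the reality of $\mathcal{X}$ together with the monodromicity of the origin. Monodromicity forces any real invariant curve through $(0,0)$ to have the origin as an isolated real zero, so the task is really to produce the curve, not to rule out extra branches.

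First, I would complexify the vector field by regarding $P,Q$ as holomorphic functions in a neighborhood of $(0,0)\in\mathbb{C}^2$, and denote the resulting holomorphic vector field by $\mathcal{X}^{\mathbb{C}}$. Monodromicity implies that $(0,0)$ is an isolated zero of $\mathcal{X}$ in $\mathbb{R}^2$, but not automatically in $\mathbb{C}^2$. The obstruction is the possible existence of a common holomorphic factor of $P$ and $Q$ that is real but vanishes only at the origin in $\mathbb{R}^2$ (e.g.\ a power of $x^2+y^2$). Such a factor can be extracted simultaneously from $P$ and $Q$ without altering the real phase portrait (the reduction only reparameterizes time), and after this normalization $\mathcal{X}^{\mathbb{C}}$ has an isolated singularity at the origin, so the Camacho--Sad theorem applies and yields a holomorphic function $f$ with $f(0,0)=0$ whose zero locus $\{f=0\}$ is invariant under $\mathcal{X}^{\mathbb{C}}$.

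Next, because the Taylor coefficients of $P,Q$ are real, complex conjugation $(x,y)\mapsto(\bar x,\bar y)$ is a symmetry of the complex flow. Hence the function $\bar f$ obtained by conjugating the Taylor coefficients of $f$ also defines an $\mathcal{X}^{\mathbb{C}}$-invariant curve, and the holomorphic product
\[
F^{\mathbb{R}}(x,y) := f(x,y)\,\bar f(x,y) = u(x,y)^2 + v(x,y)^2,
\]
where $f=u+iv$ with $u,v$ real analytic, has real Taylor coefficients and defines a real analytic invariant curve of $\mathcal{X}$ with $F^{\mathbb{R}}(0,0)=0$.

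Finally, since $F^{\mathbb{R}}\ge 0$ on $\mathbb{R}^2$, its real zero set is $\{u=0\}\cap\{v=0\}$. If this set contained a one-dimensional analytic branch through the origin, that branch would be a real $\mathcal{X}$-invariant curve accumulating at the singular point, forcing some real orbit to approach $(0,0)$; this contradicts monodromicity, according to which every nearby orbit winds around the origin. Therefore the origin is an isolated real zero of $F^{\mathbb{R}}$, as required. The step I expect to be the most delicate is the first one, namely ensuring that $\mathcal{X}^{\mathbb{C}}$ has an isolated singularity at the origin so that Camacho--Sad is applicable; once that is cleanly arranged, both the reality trick and the monodromicity argument are short.
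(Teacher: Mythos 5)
Your proposal is correct and follows essentially the same route as the paper: complexify, reduce to an isolated singularity of $\mathcal{X}^{\mathbb{C}}$ by cancelling a common factor of $P$ and $Q$, invoke the Camacho--Sad separatrix theorem, take $F^{\mathbb{R}}=f\,\bar f$, and let monodromy force the isolated real zero. The only point you compress is why the common factor in $\mathbb{C}\{x,y\}$ can always be taken real; the paper makes this explicit by pairing any non-real irreducible common factor $\xi$ with its conjugate $\bar\xi$ to produce the real common factor $\xi\bar\xi$, contradicting coprimality in $\mathbb{R}\{x,y\}$.
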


Before the statement of Theorem \ref{Th-centers-pq-period} we need to introduce some background. Given an analytic vector field $\mathcal{X} = P(x,y) \partial_x + Q(x,y) \partial_y$ with
$$
P(x,y) = \sum_{(i,j) \in \mathbb{N}^2} a_{ij} x^i y^{j-1}, \ \ Q(x,y) = \sum_{(i,j) \in \mathbb{N}^2} b_{ij} x^{i-1} y^{j},
$$
the support of $\mathcal{X}$ is defined by ${\rm supp}(\mathcal{X}) = \{ (i, j) \in \mathbb{N}^2 :  (a_{ij}, b_{ij}) \neq (0, 0)\}$. The boundary of the convex hull of the set
$$
\bigcup_{(i,j) \in {\rm supp}(\mathcal{X})} \{ (i,j) + \mathbb{R}_+^2 \}
$$
contains two open rays and a polygon. The Newton diagram $\mathbf{N}(\mathcal{X})$ of $\mathcal{X}$ is composed by that polygon together with the rays that do not lie on a coordinate axis in case they exist. Each edge of $\mathbf{N}(\mathcal{X})$ has endpoints in $\mathbb{N}^2$ and we associate to it the weights $(p,q) \in \mathbb{N}^2$, with $p$ and $q$ coprimes, given by the tangent $q/p$ of the angle between that segment and the ordinate axis. Throughout this work we denote by $W(\mathbf{N}(\mathcal{X})) \subset \mathbb{N}^2$ the set containing all the weights associated to the edges in $\mathbf{N}(\mathcal{X})$.
\newline

Given $(p,q) \in W(\mathbf{N}(\mathcal{X}))$, we perform the {\it weighted polar} blow-up $(x,y) \mapsto (\rho, \varphi)$ given by $(x,y) = \phi(\varphi, \rho) = (\rho^p \cos\varphi, \rho^q \sin\varphi)$ transforming \eqref{DCC-1} into the polar vector field $\dot{\rho} = R(\varphi, \rho)$, $\dot{\varphi} = \Theta(\varphi, \rho)$ and consider the differential equation
\begin{equation}\label{eq3**}
\frac{d \rho}{d \varphi} \, = \, \mathcal{F}(\varphi, \rho) := \frac{R(\varphi, \rho)}{\Theta(\varphi, \rho)}
\end{equation}
well defined in $C \backslash \Theta^{-1}(0)$ being the cylinder $C  \, = \, \left\{ (\theta, \rho) \in \mathbb{S}^1 \times \mathbb{R} \, : \, 0 \leq \rho \ll 1 \right\}$ with $\mathbb{S}^1 = \mathbb{R}/ (2 \pi \mathbb{Z})$. In Remark \ref{rem-choose-pq} we justify why we only consider in the polar blow-up weights in the set $W(\mathbf{N}(\mathcal{X}))$.

Let $F(x,y)=0$ be a real invariant analytic curve of $\mathcal{X}$ which always exists under the assumptions of Theorem \ref{teo-F-exists}.  Denote by $K(x,y)$ the {\it cofactor} of $F$, that is, $\mathcal{X}(F) = K F$ holds. In weighted polar coordinates this equation is transformed into $\hat{\mathcal{X}}(\hat{F}) = \hat{K} \hat{F}$ where $\hat{\mathcal{X}} = \partial_\varphi + \mathcal{F}(\varphi, \rho) \partial_\rho$, $\hat{F} = F \circ \phi$ and $\hat{K}$ is the cofactor of the invariant curve $\hat{F}=0$ of $\hat{\mathcal{X}}$, see equation \eqref{def-Khat} for its explicit expression.

Let $\rho(\varphi; \rho_0)$ be the solution of the Cauchy problem (\ref{eq3**}) with initial condition $\rho(0; \rho_0) = \rho_0 > 0$  sufficiently small, and $\hat\gamma_{\rho_0} = \{ (\varphi, \rho(\varphi; \rho_0) : 0 \leq \varphi \leq 2 \pi \} \subset C$ an arc of orbit of \eqref{eq3**}. We define
\begin{equation}\label{def-int-K}
\int_{\hat\gamma_{\rho_0}} \hat K = PV \int_{0}^{2 \pi} \hat K(\varphi, \rho(\varphi; \rho_0)) \, d \varphi,
\end{equation}
where $PV$ stands for the {\it Cauchy principal value}. We recall here that given a continuous function $f$ defined in $I \subset [0, 2 \pi] \backslash \Omega$ with $\Omega = \{ \theta^*_1, \ldots, \theta^*_\ell \}$, the Cauchy principal value of the integral $\int_I f(\theta) \, d \theta$ is defined as
$$
PV \int_I f(\theta) \, d \theta = \lim_{\varepsilon \to 0^+} \int_{I_\varepsilon} f(\theta) \, d \theta,
$$
when the limit exists. Here we have used the notation $I_\varepsilon = I \backslash J_\varepsilon$ with $J_\varepsilon = \cup_{i=1}^\ell (\theta_i^*-\varepsilon, \theta_i^*+\varepsilon)$. We emphasize that if an improper integral is convergent then it must converge to its Cauchy principal value. Of course $PV \int_I f(\theta) \, d \theta$ may exist though the improper integral $\int_I f(\theta) \, d \theta$ does not.
\newline

Now we are in position to state our second main result which is proved in section \ref{Sec-PT2}. It shows the existence of \eqref{def-int-K} as well as a characterization of the centers in terms of \eqref{def-int-K}.

\begin{theorem}\label{Th-centers-pq-period}
Let $\mathcal{X}$ be a family of analytic planar vector fields having a monodromic singular point at the origin and $K$ the cofactor associated to a real analytic invariant curve through the origin. Then $\int_{\hat\gamma_{\rho_0}} \hat K$ exists and moreover the origin is a center if and only if
$$
\int_{\hat\gamma_{\rho_0}} \hat K \equiv 0
$$
for any initial condition $\rho_0 > 0$ sufficiently small.
\end{theorem}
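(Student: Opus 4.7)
The plan is to integrate the invariant-curve identity along the orbit and convert it into a logarithmic derivative. From $\mathcal{X}(F)=KF$ and its pull--back $\hat{\mathcal{X}}(\hat F)=\hat K\hat F$ with $\hat{\mathcal{X}}=\partial_\varphi+\mathcal{F}\,\partial_\rho$, evaluating along an orbit $\rho(\varphi;\rho_0)$ of \eqref{eq3**} and applying the chain rule yields
\[
\frac{d}{d\varphi}\hat F\bigl(\varphi,\rho(\varphi;\rho_0)\bigr)=\hat K\bigl(\varphi,\rho(\varphi;\rho_0)\bigr)\,\hat F\bigl(\varphi,\rho(\varphi;\rho_0)\bigr)
\]
at every $\varphi$ where $\Theta(\varphi,\rho(\varphi;\rho_0))\neq 0$. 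Theorem \ref{teo-F-exists} guarantees that $F^{\mathbb{R}}=0$ has an isolated zero at the origin, so for $\rho_0>0$ small the arc $\hat\gamma_{\rho_0}$ stays in a punctured neighborhood of $(0,0)$ and $\hat F$ does not vanish on it. Thus $\psi(\varphi):=\log\bigl|\hat F(\varphi,\rho(\varphi;\rho_0))\bigr|$ is continuous on $[0,2\pi]$ and its classical derivative equals $\hat K(\varphi,\rho(\varphi;\rho_0))$ off the finite set $\Omega$ of characteristic directions.

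Next I would establish the existence of the principal value and compute it simultaneously. On each of the finitely many subintervals making up $I_\varepsilon=[0,2\pi]\setminus J_\varepsilon$ the fundamental theorem of calculus applies to $\psi$. Summing the resulting telescoping contributions, every intermediate boundary term at $\varphi_i^*\pm\varepsilon$ cancels as $\varepsilon\to 0^+$ by continuity of $\psi$, leaving only the endpoints $\varphi=0$ and $\varphi=2\pi$. Using $2\pi$-periodicity of $\hat F$ in $\varphi$ I then obtain
\[
\int_{\hat\gamma_{\rho_0}}\hat K \;=\; \psi(2\pi)-\psi(0) \;=\; \log\frac{\bigl|\hat F(0,\Pi(\rho_0))\bigr|}{\bigl|\hat F(0,\rho_0)\bigr|},
\]
where $\Pi(\rho_0):=\rho(2\pi;\rho_0)$ is the Poincar\'e return map on the section $\{\varphi=0\}$. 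This formula both proves that the principal value exists and expresses it in closed form.

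For the characterization, specialising the weighted blow-up at $\varphi=0$ yields $\hat F(0,\rho)=F(\rho^p,0)$, which is analytic in $\rho$ and, by the isolated-zero property of $F^{\mathbb{R}}$, cannot vanish identically. Hence $\hat F(0,\rho)=c\,\rho^{m}(1+O(\rho))$ with $c\neq 0$ and $m\geq 1$, so $\rho\mapsto|\hat F(0,\rho)|$ is strictly monotone on a right neighborhood of $0$ and in particular injective there. Consequently $\int_{\hat\gamma_{\rho_0}}\hat K\equiv 0$ for every small $\rho_0>0$ is equivalent to $\Pi(\rho_0)\equiv\rho_0$, that is, to the origin being a center; the reverse implication is trivial since then $\hat F(0,\Pi(\rho_0))=\hat F(0,\rho_0)$.

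The main obstacle I foresee is the rigorous handling of the characteristic directions. One must justify that the orbit $\rho(\varphi;\rho_0)$ extends as a continuous single-valued function through each $\varphi_i^*\in\Omega$ (a consequence of monodromicity combined with the weighted-blow-up desingularisation) and that $\psi$ itself remains continuous at those points, so that the endpoint cancellations in the principal-value limit are legitimate. Once this continuity set-up is in place, and given the explicit form of $\hat K$ fixed in equation \eqref{def-Khat}, the remainder of the argument reduces to the logarithmic-derivative identity and the analytic monotonicity of $F(\rho^p,0)$.
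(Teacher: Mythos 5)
Your argument is correct and follows essentially the same route as the paper: the identity $\frac{d}{d\varphi}\hat F(\varphi,\rho(\varphi;\rho_0))=\hat K\,\hat F$ along orbits, the continuous logarithmic primitive whose telescoping boundary terms cancel in the principal value, the closed form $\log\bigl|\hat F(0,\Pi(\rho_0))/\hat F(0,\rho_0)\bigr|$ via $2\pi$-periodicity, and the non-degeneracy of $\hat F(0,\rho)$ to conclude $\Pi(\rho_0)\equiv\rho_0$. Your justification that $\hat F(0,\rho)=F(\rho^p,0)$ is a nonzero analytic function (directly from the isolated real zero of $F$, rather than the paper's quasihomogeneous-leading-term and forbidden-invariant-ray argument) is a minor, equally valid variant.
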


\begin{remark}
{\rm Theorem \ref{Th-centers-pq-period} becomes trivial in the particular case where the invariant curve $F=0$ is constructed with a first integral $F$ of $\mathcal{X}$, hence $K \equiv 0$ and so $\hat K \equiv 0$. In other words, if there is an analytic first integral in a neighborhood of a monodromic sigular point then it is a center.}
\end{remark}

\begin{remark}
{\rm  It is worth to emphasize that, in general, the function $\int_{\hat\gamma_{\rho_0}} \hat K$ cannot be extended by continuity at $\rho_0=0$. This statement is proved in Remark \ref{re-no-exteder-r0}. }
\end{remark}

It is clear that in the application of Theorem \ref{Th-centers-pq-period} to specific families $\mathcal{X}$ we need the flow $\rho(\varphi; \rho_0)$ of (\ref{eq3**}). Sometimes we can overcome this difficulty as it is shown in the following Corollary \ref{Corol-semidef} where we show a sufficient focus condition.

\begin{corollary} \label{Corol-semidef}
Let $\mathcal{X}$ be a family of analytic planar vector fields having a monodromic singular point at the origin. Let $K$ be the cofactor associated to a real analytic invariant curve passing through the origin. If there is a neighborhood $\mathcal{U} \subset \mathbb{R}^2$ of the origin where $K$ is sign-defined then the origin is a focus of $\mathcal{X}$.
\end{corollary}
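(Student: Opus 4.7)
The plan is to read the corollary as a direct consequence of Theorem \ref{Th-centers-pq-period}: by that result the origin is a center if and only if $\int_{\hat\gamma_{\rho_0}} \hat K \equiv 0$ for every sufficiently small $\rho_0>0$, so under the sign-definite hypothesis on $K$ it suffices to exhibit one $\rho_0$ for which the principal value is nonzero.

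The essential step is to read off the sign of $\hat K$ along the arc $\hat\gamma_{\rho_0}$. Combining the cofactor identity $\mathcal{X}(F)=KF$ with the passage to polar coordinates one obtains, from equation \eqref{def-Khat}, the factorization $\hat K(\varphi,\rho) = K(\phi(\varphi,\rho))/\Theta(\varphi,\rho)$. For $\rho_0>0$ small enough the lift of $\hat\gamma_{\rho_0}$ to the $(x,y)$-plane is contained in $\mathcal{U}\setminus\{0\}$, so the numerator $K\circ\phi$ has fixed sign along $\hat\gamma_{\rho_0}$ by hypothesis. Monodromicity takes care of the denominator: the orbit of $\mathcal{X}$ corresponding to $\hat\gamma_{\rho_0}$ turns monotonically around the origin, which is precisely the assertion that $\dot\varphi=\Theta(\varphi,\rho(\varphi;\rho_0))$ does not change sign along the arc. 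Consequently $\hat K$ is of one definite sign along the whole of $\hat\gamma_{\rho_0}$.

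Existence of the Cauchy principal value $\int_{\hat\gamma_{\rho_0}}\hat K$ is already granted by Theorem \ref{Th-centers-pq-period}. Once this is known, one-signedness of the integrand forces the PV to be strictly of that sign: the truncated integrals $\int_{I_\varepsilon} \hat K(\varphi,\rho(\varphi;\rho_0))\,d\varphi$ are monotone in $\varepsilon$ and bounded below (or above) in absolute value by the contribution of any fixed compact subarc on which $K$ is bounded away from zero, which is strictly positive (or negative). Therefore $\int_{\hat\gamma_{\rho_0}}\hat K\neq 0$, and Theorem \ref{Th-centers-pq-period} rules out a center; by the center/focus dichotomy on $\Lambda$ established by Il'yashenko and \'Ecalle, the origin must then be a focus.

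The main subtlety is making this last sign conclusion rigorous in the presence of the characteristic-direction loci at which $\Theta$ vanishes at $\rho=0$, near which the trajectory may pass and the integrand blow up. This is absorbed by the two observations already used above: $\Theta$ retains its sign along the open trajectory for each fixed $\rho_0>0$, and Theorem \ref{Th-centers-pq-period} itself guarantees convergence of the PV. A minor point worth flagging is that \emph{sign-defined} here must be read in the strict sense; the trivial possibility $K\equiv 0$ would make $F$ a first integral and hence produce a center, inconsistent with the claim.
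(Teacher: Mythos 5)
Your proof is correct and follows essentially the same route as the paper's: both reduce to showing that the principal value is nonzero because $D(\varphi)>0$, $\rho^r(\varphi;\rho_0)>0$ and $\Theta(\varphi,\rho(\varphi;\rho_0))\geq 0$ along the arc by monodromy, so the integrand inherits the fixed sign of $K$ and Theorem \ref{Th-centers-pq-period} then excludes a center. (Your displayed factorization of $\hat K$ omits the positive factor $D(\varphi)/\rho^r$ from \eqref{def-Khat}, which is harmless for the sign argument; your additional care about why a one-signed integrand with convergent principal value must integrate to a nonzero value, and about excluding $K\equiv 0$, only fills in details the paper leaves implicit.)
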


\begin{remark}
{\rm A sufficient condition for $K$ to be sign-defined in $\mathcal{U}$ is that there exists a finite jet $\mathcal{J}^d K$ (the Taylor polynomial of $K$ at the origin of degree $d$) that is positive or negative defined in $\mathcal{U} \backslash \{(0,0)\}$.  Recall that the computation of $\mathcal{J}^d K$ is algorithmic as we will see in this work. }
\end{remark}

\begin{remark}
{\rm The reader can check how Corollary \ref{Corol-semidef} works in the trivial example given by the linear family of vector fields $\mathcal{X} = (-y+\lambda x) \partial_x + (x+\lambda y) \partial_y$ having a center at the origin when $\lambda =0$ and a focus otherwise by using its invariant curve $F(x,y) = x^2+y^2 = 0$ whose cofactor $K(x,y)= 2 \lambda$ is constant.}
\end{remark}

\begin{remark}\label{multiples-F}
{\rm If $\mathcal{X}$ has $\ell > 1$ invariant analytic curves $F_i=0$ with cofactors $K_i$ for $i=1, \ldots, \ell$, then it also has the $\ell$-parameter invariant analytic curve $F = \prod_i F_i^{m_i} = 0$ with arbitrary multiplicities $m_i \in \mathbb{N}$ and cofactor $K = \sum_i m_i K_i$. Sometimes we may found $m_i$ such that we can apply Corollary \ref{Corol-semidef} to $K$.  }
\end{remark}

\begin{remark}\label{V-Xr}
{\rm In order to compute $K_{\bar{r}}$ we could apply the theory of invariant branches developed in sections \ref{Sec-IB1} and \ref{Sec-IB2}. Anyway we can overcome some difficulties appearing in that theory, see Remark \ref{Demina-remark},  just by noticing that if $\mathcal{X} = \mathcal{X}_r + \cdots$, $F(x,y) = F_{s}(x,y) + \cdots$ and $K(x,y) = K_{\bar{r}}(x,y) +  \cdots$ are the $(p,q)$-quasihomogeneous expansions of the vector field $\mathcal{X}$ with analytic invariant curve $F = 0$ having cofactor $K$ then $F_s = 0$ is an invariant algebraic curve of $\mathcal{X}_r = P_{p+r}(x,y) \partial_x + Q_{q+r}(x,y) \partial_y$ with cofactor $K_{\bar{r}}$. Moreover, following the results of Algaba {\it et al.} \cite{AGR3} we know that the irreducible factors of $F_s$ are factors of the inverse integrating factor $V(x,y) = (p x, q y) \wedge \mathcal{X}_{r} = p x Q_{q+r}(x,y) - q y P_{p+r}(x,y)$ of $\mathcal{X}_r$. }
\end{remark}

Finally we focus on analytic families of vector fields $\mathcal{X}$ in the monodromic class ${\rm Mo}^{(p,q)}$. In this case $\Pi$ is analytic, hence can be expressed as $\Pi(\rho_0; \lambda) = \sum_{i \geq 1} \eta_{i}(\lambda) \rho_0^{i}$, where the generalized Poincar\'e-Lyapunov quantities $\eta_{i}$ are analytic functions of the parameters $\lambda$ of the family. The ring $\mathfrak{R}$ of germs of analytic functions at a point $\lambda^* \in \Lambda$ is noetherian. Hence the \emph{Bautin ideal} $\mathcal{B}$ in $\mathfrak{R}$ of the family $\mathcal{X}$ generated by all the $\eta_i$, which we denote by $\mathcal{B} = \langle \eta_1-1, \eta_2, \eta_3, \ldots \rangle$, is finitely generated.

In the next result we will use the following notation: given an ideal $\mathcal{I}$ in the ring $\mathfrak{R}$ we define $\tilde{\eta}_j = \eta_j \mod \mathcal{I}$ to denote that $\tilde{\eta}_j$ is  the remainder of $\eta_j$ upon division by a basis of $\mathcal{I}$.

Our last result shows the structure of the Poincar\'e map $\Pi$ in the monodromic class ${\rm Mo}^{(p,q)}$ and it is proved in subsection \ref{subsec-mopq}.

\begin{theorem}\label{Th-centers-Mopq}
Let $\mathcal{X} = \mathcal{X}_r + \cdots$ with $r \geq 1$ be the $(p,q)$-quasihomogeneous expansion of an analytic family of planar vector field  $\mathcal{X} \in {\rm Mo}^{(p,q)}$ having an analytic invariant curve $F(x,y) = 0$ passing through the origin whose $(p,q)$-quasihomogeneous expansion is given by $F(x,y) = F_{s}(x,y) + \cdots$. Then $K(x,y) = K_{\bar{r}}(x,y) +  \cdots$ with $\bar{r} \geq r$ is the $(p,q)$-quasihomogeneous expansion of the cofactor $K$ of $F$. Moreover,
\begin{equation}\label{Teo-int-K-alphas}
\int_{\hat\gamma_{\rho_0}} \hat K  = \sum_{i \geq \bar{r}-r} \beta_i \, \rho_0^i
\end{equation}
is not a singular integral and the analytic Poincar\'e map $\Pi$ has the Taylor expansion $\Pi(\rho_0) = \sum_{i \geq 1} \eta_{i} \rho_0^{i}$ where the following relations hold:
\begin{itemize}
  \item[(i)] If $\bar{r}-r = 0$ then $\beta_0 = \log(\eta_1^s)$ and $\beta_i = s \eta_{i+1} \mod \langle \eta_1-1, \eta_2, \ldots, \eta_i \rangle$ for $i > 0$.
  \item[(ii)] If $\bar{r}-r \geq 1$ then $\eta_1 =1$, $\eta_2 = \cdots = \eta_{\bar{r}-r} = 0$, $\beta_{\bar{r}-r} = s \eta_{\bar{r}-r+1}$, and $\beta_i = s \eta_{i+1} \mod \langle \eta_{\bar{r}-r+1}, \ldots, \eta_i \rangle$ for $i > \bar{r}-r$.
\end{itemize}
\end{theorem}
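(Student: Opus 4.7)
My strategy is to integrate the cofactor equation $\hat{\mathcal{X}}(\hat F)=\hat K\hat F$ along the orbit $\hat\gamma_{\rho_0}$, use the $2\pi$-periodicity of $\hat F$ in $\varphi$ to convert $\int_{\hat\gamma_{\rho_0}}\hat K$ into a logarithmic difference of $\hat F$, and then compare Taylor coefficients in $\rho_0$ to extract \eqref{Teo-int-K-alphas} together with relations (i)--(ii).

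First I would exploit $\mathcal{X}\in{\rm Mo}^{(p,q)}$ to secure analyticity. The absence of $(p,q)$-characteristic directions says precisely that the leading $\varphi$-factor $\tilde\Theta(\varphi)$ of $\Theta$ is nonvanishing on $\mathbb{S}^1$, so $\mathcal{F}=R/\Theta$ is analytic at $\rho=0$ with $\mathcal{F}(\varphi,0)\equiv 0$. Hence $\rho(\varphi;\rho_0)=\sum_{k\geq 1}\rho_k(\varphi)\rho_0^k$ converges for small $\rho_0$ and the Poincar\'e map $\Pi(\rho_0)=\sum_{i\geq 1}\eta_i\rho_0^i$ is analytic. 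Comparing the lowest $(p,q)$-quasihomogeneous parts of $\mathcal{X}(F)=KF$ forces $\bar r\geq r$, since the left-hand side has lowest degree $\geq r+s$ (from $\mathcal{X}_r(F_s)$, which may vanish) whereas the right-hand side has lowest degree exactly $\bar r+s$. Writing
$$\hat K(\varphi,\rho)=\rho^{\bar r-r}\,\frac{K_{\bar r}(\cos\varphi,\sin\varphi)}{\tilde\Theta(\varphi)}+O(\rho^{\bar r-r+1}),$$
substitution of the flow yields a bona fide power series $\int_{\hat\gamma_{\rho_0}}\hat K=\sum_{i\geq\bar r-r}\beta_i\,\rho_0^i$ and no principal value is needed.

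The pivotal step is the logarithmic identity
$$\int_{\hat\gamma_{\rho_0}}\hat K \;=\; \log\hat F(0,\Pi(\rho_0))-\log\hat F(0,\rho_0),$$
obtained by integrating $\tfrac{d}{d\varphi}\log\hat F(\varphi,\rho(\varphi;\rho_0))=\hat K$ from $0$ to $2\pi$ and using the $2\pi$-periodicity of $\hat F$. To legitimize the logarithm I must show $\bar F(\varphi):=F_s(\cos\varphi,\sin\varphi)$ has no zero on $\mathbb{S}^1$: by Remark \ref{V-Xr} every irreducible factor of $F_s$ divides the inverse integrating factor $V_r$ of $\mathcal{X}_r$, and the latter has no zeros on $\mathbb{S}^1$ precisely because $\mathcal{X}\in{\rm Mo}^{(p,q)}$. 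Then $\hat F(0,\rho)=\rho^s\psi(\rho)$ with $\psi$ analytic and $\psi(0)\neq 0$, and setting $H(\rho):=\log\psi(\rho)$ the identity becomes
$$\int_{\hat\gamma_{\rho_0}}\hat K \;=\; s\,\log\frac{\Pi(\rho_0)}{\rho_0}+\bigl[H(\Pi(\rho_0))-H(\rho_0)\bigr].$$
I regard this non-vanishing of $\bar F$, which ties Remark \ref{V-Xr} to the very definition of the class ${\rm Mo}^{(p,q)}$, as the genuinely delicate ingredient of the proof.

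Finally I would read off (i)--(ii) by comparing coefficients of $\rho_0^i$ in the identity above. In case (i) ($\bar r=r$) the constant term gives $\beta_0=s\log\eta_1=\log(\eta_1^s)$; for $i\geq 1$, working modulo the ideal $\mathcal{I}_i:=\langle\eta_1-1,\eta_2,\ldots,\eta_i\rangle$ one has $\Pi(\rho_0)-\rho_0\equiv \eta_{i+1}\rho_0^{i+1}+O(\rho_0^{i+2})$, so $s\log(\Pi/\rho_0)\equiv s\eta_{i+1}\rho_0^i+O(\rho_0^{i+1})$, and since $H(\Pi)-H(\rho_0)=(\Pi-\rho_0)\int_0^1 H'(\rho_0+t(\Pi-\rho_0))\,dt$ is $O(\rho_0^{i+1})$ modulo $\mathcal{I}_i$, extracting the $\rho_0^i$-coefficient gives $\beta_i\equiv s\eta_{i+1}\pmod{\mathcal{I}_i}$. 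In case (ii) ($\bar r-r\geq 1$) the enforced vanishing of $\beta_0,\ldots,\beta_{\bar r-r-1}$ bootstraps through the same identity to yield, successively and as equalities in $\mathfrak{R}$, $\eta_1=1$ and then $\eta_2=\cdots=\eta_{\bar r-r}=0$; the remaining relations $\beta_{\bar r-r}=s\eta_{\bar r-r+1}$ and $\beta_i\equiv s\eta_{i+1}\pmod{\langle\eta_{\bar r-r+1},\ldots,\eta_i\rangle}$ for $i>\bar r-r$ then follow by the same expansion argument.
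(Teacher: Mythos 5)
Your proposal is correct and its skeleton coincides with the paper's proof: analyticity of $\Pi$ and of $\hat K$ at $\rho=0$ from $\Omega_{pq}=\emptyset$, the inequality $\bar r\geq r$ read off from the lowest $(p,q)$-quasihomogeneous order of $\mathcal{X}(F)=KF$, the logarithmic identity $\int_{\hat\gamma_{\rho_0}}\hat K=\log\left|\hat F(0,\Pi(\rho_0))/\hat F(0,\rho_0)\right|$, and the comparison of its Taylor expansion in $\rho_0$ with the directly computed series $\sum_{i\geq\bar r-r}\beta_i\rho_0^i$ to obtain (i) and (ii). Your explicit splitting $s\log(\Pi(\rho_0)/\rho_0)+\bigl[H(\Pi(\rho_0))-H(\rho_0)\bigr]$ actually makes the congruences $\beta_i\equiv s\eta_{i+1}$ more transparent than the paper's terse ``the computations show''.

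The one point where you genuinely diverge is the step you yourself flag as delicate: the non-vanishing of $F_s(\cos\varphi,\sin\varphi)$ on $\mathbb{S}^1$. You derive it algebraically from Remark \ref{V-Xr}: the irreducible factors of $F_s$ divide the inverse integrating factor $V=pxQ_{q+r}-qyP_{p+r}$ of $\mathcal{X}_r$, whose restriction to the circle is $G_r(\varphi)$, nonzero by the very definition of ${\rm Mo}^{(p,q)}$. The paper instead invokes Lemma \ref{lemma1-Mopq}(ii), which rests on the dynamical observation, made inside the proof of Theorem \ref{Th-centers-pq-period}, that $\hat F/\rho^s=0$ is an invariant curve of the polar equation and therefore can meet the polycycle $\{\rho=0\}$ only at points of $\Omega_{pq}$, which is empty here. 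Both arguments are sound; yours imports the algebraic result of \cite{AGR3} quoted in Remark \ref{V-Xr}, while the paper's is self-contained given Theorem \ref{Th-centers-pq-period} and also localizes the zeros in $\Omega_{pq}$ when that set is nonempty. Everything else, including your bootstrap in case (ii) deducing $\eta_1=1$ and then $\eta_2=\cdots=\eta_{\bar r-r}=0$ from the absence of terms of order below $\bar r-r$ in the series, matches the paper's argument.
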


\section{Real analytic invariant curves} \label{SS3}

As a toy example illustrating some of the ideas in the proof of Theorem \ref{teo-F-exists} we can think in the linear family of vector fields $\mathcal{X} = (-y+\lambda x) \partial_x + (x+\lambda y) \partial_y$. Its complexification $\mathcal{X}^{\mathbb{C}} = (-y+\lambda x) \partial_x + (x+\lambda y) \partial_y$ has the separatrices $f(x,y) = x + i y = 0$ and $\overline{f(x,y)} = x - i y = 0$ (where the bar denotes taking complex conjugates of the coefficients) with cofactors $\kappa(x,y) = i + \lambda$ and $\overline{\kappa(x,y)} = -i + \lambda$, respectively, giving rise to the real analytic invariant curve $F^{\mathbb{R}}(x,y) = f(x,y) \overline{f(x,y)} = x^2+y^2 = 0$ with cofactor $K(x,y)= \kappa(x,y) + \overline{\kappa(x,y)} = 2 \lambda$.
\newline

Let $\mathbb{R} \{ x, y \}$ and $\mathbb{C} \{ x, y \}$ be the ring of real analytic functions in a neighborhood of the origin of $\mathbb{R}^2$ and the ring of holomorphic functions in a neighborhood of the origin of $\mathbb{C}^2$, respectively. We use the notation $\mathbb{K} \in \{ \mathbb{R}, \mathbb{C} \}$. Given $F \in \mathbb{K} \{x, y\}$ with $F(0,0) = 0$, we say that $F$ is reducible in $\mathbb{K}\{x, y\}$ if $F = F_1 F_2$ with $F_1, F_2 \in \mathbb{K}\{x, y\}$ and $F_1(0,0) = F_2(0,0) = 0$. Otherwise, we say that $F$ is {\it irreducible} in $\mathbb{K}\{x, y\}$. Any $F \in \mathbb{K}\{x, y\}$ can be decomposed in factors as $F(x,y) = U(x,y) \prod_{j=1}^k F_j^{m_j}(x,y)$ where the multiplicities $m_j$ are positive integers, $U, F_j \in \mathbb{K}\{x, y\}$, the $F_j$ are irreducible in $\mathbb{K}\{x, y\}$ and $U$ is a unit, that is, $U(0,0) \neq 0$. We say that $F, G \in \mathbb{K}\{x, y\}$ are coprime if they have no common non-unit irreducible factor in its decomposition over $\mathbb{K}\{x, y\}$.

\subsection{Proof of Theorem \ref{teo-F-exists}} \label{S-PT1}

\begin{proof}
By definition of monodromy, the origin must be an isolated singularity of the vector field $\mathcal{X} = P(x,y) \partial_x + Q(x,y) \partial_y$ in $\mathbb{R}^2$. But this does not implies that $P$ and $Q$ be coprime in the ring $\mathbb{R} \{ x, y \}$. It may occurs that $P$ and $Q$ have a common factor $\xi \in \mathbb{R} \{ x, y \}$ which is either a unit or it has an isolated zero at the origin. However the rescaled vector field $\mathcal{X}/\xi$ still has a monodromic singularity at the origin and now its components are real coprime. So we only study the case with $P$ and $Q$ coprime in $\mathbb{R} \{ x, y \}$.

Our strategy starts by taking the canonical complexification $\mathcal{X}^\mathbb{C}$ at $(\mathbb{C}^2, 0)$ of the real analytic vector field $\mathcal{X}$ at $(\mathbb{R}^2, 0)$. Thus we consider the complex extensions of $P$ and $Q$ (without changing its name) which are holomorphic in a neighborhood of the origin of $\mathbb{C}^2$ and $P(0,0) = Q(0,0) = 0$. There are two possibilities, either the singularity at the origin of $\mathcal{X}^\mathbb{C}$ in $\mathbb{C}^2$ is isolated or not.

The simplest case is when the origin is a non-isolated singular point of $\mathcal{X}^\mathbb{C}$. There it exists a common irreducible component $\xi \in \mathbb{C} \{ x, y \} \backslash \mathbb{R} \{ x, y \}$ of $P$ and $Q$ with $\xi(0,0)=0$. Since $\mathcal{X}$ is real it follows that the $\overline{\xi(x,y)}$ must be also a common irreducible component of $P$ and $Q$ where the bar denotes taking complex conjugates of the coefficients of $\xi$. Consequently $P$ and $Q$ must contain the common factor $\xi(x,y) \overline{\xi(x,y)}$ that turns into a real factor of $\mathcal{X}$ in contradiction with the coprimality hypothesis.
\newline

From now we assume that $\mathcal{X}^\mathbb{C}$ has an isolated singularity in $\mathbb{C}^2$ at the origin. We recall that {\it separatrix} for a germ of holomorphic vector field $\mathcal{Z}$ at $(\mathbb{C}^2, 0)$ is an invariant irreducible analytic curve passing through the origin. In the celebrated paper \cite{CS}, C. Camacho and P. Sad proved that if the origin in $\mathbb{C}^{2}$ is an isolated singularity of $\mathcal{Z}$ then there always exists at least one separatrix. Our strategy is to invoke the Camacho-Sad separatrix theorem to $\mathcal{X}^\mathbb{C}$. The outcome is the existence of a function $f \in \mathbb{C}\{x,y\}$ such that $f(0,0)=0$ and the curve $f(x,y)=0$ is invariant for the local flow of $\mathcal{X}^\mathbb{C}$. In particular, there is a complex cofactor $\kappa \in \mathbb{C}\{x,y\}$ such that $\mathcal{X}^\mathbb{C}(f) = \kappa f$. In consequence $\mathcal{X}(f) = \kappa f$, and taking complex conjugation (denoted by a bar) in the above equation gives $\mathcal{X}(\bar{f}) = \bar{\kappa} \bar{f}$. Therefore the real analytic function $F^{\mathbb{R}}(x,y) = f(x,y) \overline{f(x,y)}$ with $F^{\mathbb{R}}(0,0)=0$ is irreducible in $\mathbb{R}\{x,y\}$ and induces the real invariant analytic curve $F^{\mathbb{R}}(x,y)=0$ of $\mathcal{X}$ with real cofactor $K^{\mathbb{R}} = \kappa + \bar{\kappa}$.

The monodromy of $\mathcal{X}$ at the origin clearly implies that $F^{\mathbb{R}}$ must have an isolated zero in $\mathbb{R}^2$ at the origin.
\end{proof}

\subsection{Equivalence class of formal invariant curves} \label{subsec-equiv-class}

Let $F \in \mathbb{C}[[x,y]]$ be a formal power series with $F(0,0)=0$ such that $F(x,y)=0$ is a formal invariant curve of $\mathcal{X}$, that is, there is a formal cofactor $K \in \mathbb{C}[[x,y]]$ formally satisfying the equation $\mathcal{X}(F) = K F$. As it is emphasized in \cite{AGR3}, then $U F = 0$ is also a formal invariant curve of $\mathcal{X}$ for any formal unit $U$, that is, for any $U \in \mathbb{C}[[x,y]]$ with $U(0,0) \neq 0$. Thus an equivalence class is induced in the set of formal invariant curves of $\mathcal{X}$: two formal invariant curves $F_1=0$ and $F_2=0$ are equivalent (denoted by $F_1 \sim F_2$) if $F_1=F_2 U$ for some formal unit $U$. In \cite{AGR3} it is proved the relevant fact that, for each irreducible formal invariant curve at the origin, there always exists an irreducible analytic invariant curve such that both are equivalent. We add that, for any weights $(p,q) \in \mathbb{N}^2$, the $(p,q)$-quasihomogeneous expansion of two equivalent series $F_1$ and $F_2$ possess the same leading $(p,q)$-quasihomogeneous term.

It is worth to emphasize that, for any analytic unit $U$, there is always an analytic $K$ such that $\mathcal{X}(U) = K U$ holds just taking $K = \mathcal{X}(U)/U$. However it is obvious that the zero set $U=0$ can or cannot be an analytic invariant curve of $\mathcal{X}$. In the formal or analytic setting, the correct definition of {\it invariant unit curve} is that $\mathcal{X}(U) |_{U=0} \equiv 0$.

\subsection{Branches of analytic curves at singularities} \label{Sec-IB1}

In the following and along all the work we will only consider $F^{\mathbb{R}}$ defined in $\mathbb{R}^2$. Neither $x$ nor $y$ can divide $F^{\mathbb{R}}$ since the origin is an isolated real zero of $F^{\mathbb{R}}$. Therefore by Newton-Puiseux Theorem (see \cite{BK} for instance) there exists a finite factorization
\begin{equation}\label{factor-Puiseux}
F^{\mathbb{R}}(x,y) = u(x,y) \prod_{i} (y - y^*_i(x)) \prod_{i} (y - \bar{y}^*_i(x))
\end{equation}
where $u$ is a real analytic unit $u(0,0) \neq 0$, the $y^*_i(x)$ and $\bar{y}^*_i(x)$ are complex conjugated holomorphic functions of $x^{1/ n_i}$ with $y^*_i(0) = \bar{y}^*_i(0)= 0$ called {\it branches} of $F^{\mathbb{R}}$ at the origin, and the exponents $n_i$ are positive integers called the {\it indices} of the branches $y^*_i(x)$ and $\bar{y}^*_i(x)$. Recall that for a complex conjugated pair $(y_i^*(x), \bar{y}_i^*(x))$, we have that $(y - y_i^*(x))(y-\bar{y}_i^*(x)) = y^2- 2 y {\rm Re}(y_i^*(x)) + |y_i^*(x)|^2$ is real-valued in a half-neighborhood of $x=0$. In fact we can construct a real analytic invariant curve $F=0$ of $\mathcal{X}$ using a product $F(x,y) = \prod_{y_i^* \in \mathcal{S}} (y - y^*_i(x))$ where $\mathcal{S}$ is a convenient subset of the branches of $F^{\mathbb{R}}$. Indeed, for an irreducible $F$ the set $\mathcal{S}$ is just the conjugacy class $\mathbf{C}(y_1^*)$ of $y_1^*$, see \cite{C-A} and the proof of Proposition \ref{clever-prop}.
\newline

Using branching theory to an analytic function $F(x,y)$ with $F(0,0)=0$ we know that any branch $y_i^*$ of $F$ emerging from $(x, y) = (0,0)$ can be locally expressed as convergent Puiseux series determined by the descending sections of the Newton diagram of $F$. We recall that, given $F(x,y) = \sum_{i, j} f_{ij} x^i y^{j}$ with support ${\rm supp}(F) = \{ (j, i) \in \mathbb{N}^2 : f_{ij} \neq 0 \}$, the Newton diagram of $F$, denoted by $\mathbf{N}(F)$, is the boundary of the convex hull of the set $\bigcup_{(j,i) \in {\rm supp}(F} \{ (j,i) + \mathbb{R}^2_+\}$ where $\mathbb{R}^2_+$ is the positive quadrant. When we look at the local zero-set of $F$ near $(x,y)=(0,0)$ we will be only interested in the different segments of that polygon with rational negative slopes $-k_1/ k_2$, being $k_1$ and $k_2$ coprimes. Therefore one has that any branch $y_i^*(x)$ in \eqref{factor-Puiseux} has the form
\begin{equation}\label{branch-first}
y_i^*(x) = \alpha_0 x^{k_1/k_2} + o(x^{k_1/k_2})
\end{equation}
with $\alpha_0 \in \mathbb{C} \backslash \{ 0 \}$ and $0 < k_1/k_2 \in \mathbb{Q}$. Associated to each descending segment, the leading coefficients $\alpha_0$ are the nonzero roots of a {\it determining polynomial} $\mathcal{P}(\eta)$ associated to each descending segment of $\mathbf{N}(F)$, and defined by the first term of the expansion
\begin{equation}\label{defining-eq}
F(x^{k_2}, x^{k_1} \eta) = x^{m_1 k_1 + \hat{m}_1 k_2} \, \eta^{m_1} \big[ \mathcal{P}(\eta) + \mathcal{O}(x) \big],
\end{equation}
where the segment contains the points $(m_i, \hat{m}_i) \in \mathbb{N}^2$ with $i=1, \ldots, p$ ordered such that $m_1 < m_2 < \cdots < m_p$. By construction $\mathcal{P}$ is a polynomial having at most $m_p-m_1$ roots, the length of the horizontal projection of the segment.

Besides the leading term $\alpha_0 x^{k_1/k_2}$, the Newton-Puiseux algorithm is used to determine the higher order terms of the branch \eqref{branch-first} giving rise to the following convergent Puiseux series
\begin{equation}\label{branches}
y_i^*(x) = \sum_{j \geq 0} \alpha_j x^{\frac{k_1}{k_2} + \frac{j}{n_i}},
\end{equation}
for certain $n_i \in \mathbb{Z}^+$ called the {\it index} of the branch $y_i^*$. A particularly easy (and generic) case corresponds to the expansion \eqref{branch-first} of a {\it simple branch}, that is, a branch whose leading coefficient $\alpha_0$ is a simple root of its determining polynomial $\mathcal{P}$. The Puiseux series \eqref{branch-first} of a simple branch is \eqref{branches} with $n_i = k_2$, see a proof in \cite{VT}. So inserting \eqref{branches} with an initial string of undetermined coefficients $\alpha_j$ with $j \geq 1$ into condition $F(x, y_i^*(x)) \equiv 0$ gives the value of these uniquely determined coefficients for a simple branch.

A key point is that all the branches $y_i^*(x)$ are analytic functions of $\sigma = x^{1/ n_i}$. Therefore any branch $y_i^*$ associated to a segment of slope $-k_1/k_2$ and index $n_i$ can be expanded analytically near $\sigma=0$ as $y_i^*(\sigma^{n_i}) = \sigma^\frac{n_i k_1}{k_2} \sum_{j \geq 0} \alpha_j \sigma^{j}$ with $0 < n_i k_1/k_2 \in \mathbb{N}$ because $k_2$ divides $n_i$, and it satisfies $F(\sigma^{n_i}, y_i^*(\sigma^{n_i})) \equiv 0$ for all $\sigma$ close to zero.

\subsection{Invariant branches of $\mathcal{X}$ at singular points} \label{Sec-IB2}

On the other hand we say that a Puiseux series of the form \eqref{branch-first} with $k = k_1/k_2 \in \mathbb{Q}$ is an {\it invariant branch} of $\mathcal{X}$ at the origin if it satisfies term by term the differential equation $P(x,y) dy - Q(x,y) dx = 0$, that is, $y_i^*(x)$ must satisfy
\begin{equation}\label{eq-inv-branch}
P(x,y_i^*(x)) \frac{d y_i^*}{dx}(x) - Q(x, y_i^*(x)) = 0,
\end{equation}
for all $x$ in a half-neighborhood of $x=0$. We define $\mathcal{B}$ as the set of all the invariant branches of $\mathcal{X}$ at the origin. Clearly any element of $\mathcal{B}$ will be non-real by the monodromy of $\mathcal{X}$.

Of course all the curves $y - y^*_i(x) = 0$ (and also $y - \bar{y}^*_i(x) = 0$) are invariant for the vector field $\mathcal{X}$, that is, there is a cofactor $\kappa_i$ such that $\mathcal{X}(y-y_i^*) = \kappa_i(x,y) (y-y_i^*)$, see \cite{GGG}. This implies that all the branches $y^*_i(x)$ (and also $\bar{y}^*_i(x)$) of $F^{\mathbb{R}}$  are indeed invariant branches of $\mathcal{X}$.

The lower order terms in \eqref{eq-inv-branch} are the lower order terms of
\begin{equation}\label{eq-inv-branch-2}
\alpha_0 k x^{k-1} P(x,\alpha_0 x^{k}) - Q(x, \alpha_0 x^{k}) = 0,
\end{equation}
and in order that $\alpha_0 \neq 0$ there must be involved at least two monomials. To determine the allowed rational leading exponents $k$ and leading coefficient $\alpha_0$ of the possible invariant branches of $\mathcal{X}$ we can proceed either by simple inspection in \eqref{eq-inv-branch-2} or by two different (but equivalently) ways that we shortly explain. The first approach consists in using the Newton diagram $\mathbf{N}(\mathcal{X})$ of an analytic vector field $\mathcal{X}$, see section \ref{S1}. It is easy to see that selecting the weights $(p,q) \in W(\mathbf{N}(\mathcal{X}))$ one gets an expansion
\begin{equation}\label{campo-X}
\mathcal{X} = \sum_{j \geq r} \mathcal{X}_j
\end{equation}
where $r \geq 1$ and $\mathcal{X}_j = P_{p+j}(x,y) \partial_x + Q_{q+j}(x,y) \partial_y$ are $(p,q)$-quasihomogeneous vector fields of degree $j$. We call $\mathcal{X}_r$ the leading part of $\mathcal{X}$, which clearly depends on the chosen weights $(p,q)$.

It can be checked that, choosing $(p,q) \in W(\mathbf{N}(\mathcal{X}))$ and therefore having an expansion \eqref{campo-X}, the leading term $\alpha_0 x^{k_1/k_2}$ of any invariant branch \eqref{branch-first} of a vector field $\mathcal{X}$ must be a solution of the differential equation $P_{p+r}(x,y) dy - Q_{q+r}(x, y) dx = 0$ with $k_1/k_2 = q/p$ and $\alpha_0$ a root of a polynomial $\mathcal{Q}(\eta)$ defined by
\begin{equation}\label{det-pol-2}
P_{p+r}(x, \eta x^{q/p}) \frac{d}{d x}(\eta x^{q/p}) - Q_{q+r}(x, \eta x^{q/p}) = \mathcal{Q}(\eta) x^\frac{r+q}{p}.
\end{equation}
Condition \eqref{det-pol-2} implies that $y^p-\alpha_0 x^q = 0$ is an irreducible invariant algebraic curve of the leading vector field $\mathcal{X}_r$.

\begin{proposition}
Let $y_i^*(x) = \alpha_0 x^{q/p} + \cdots$ with $(p,q) \in W(\mathbf{N}(\mathcal{X}))$ be an invariant branch of $\mathcal{X}$, hence $\alpha_0$ is a root of the polynomial $\mathcal{Q}$ defined in \eqref{det-pol-2}. Assume that $y_i^*$ is also a branch of an analytic invariant curve $F=0$ of $\mathcal{X}$.  Moreover, let $F(x,y) = F_{s}(x,y) + \cdots$ be the $(p,q)$-quasihomogeneous expansion of $F$ where $F_s(x,y) \not\equiv 0$ is a $(p,q)$-quasihomogeneous polynomial of degree $s$ and the dots are higher order $(p,q)$-quasihomogeneous terms. Then $\alpha_0$ is also a root of the determining polynomial $\mathcal{P}$ defined by \eqref{defining-eq} with $k_1/k_2 = q/p$, which can be also computed as $F_s(1, \eta)= \eta^{m_1} \mathcal{P}(\eta)$.
\end{proposition}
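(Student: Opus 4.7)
The plan is to substitute the Puiseux expansion of $y_i^*$ into the identity $F(x, y_i^*(x)) \equiv 0$, which holds by hypothesis since $y_i^*$ is a branch of $F=0$, and then read off the lowest-order term in $x$ using the $(p,q)$-quasihomogeneous decomposition $F = F_s + F_{s+1} + \cdots$.

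First I would exploit the quasihomogeneity of each piece: writing $F_j(\lambda^p x, \lambda^q y) = \lambda^j F_j(x,y)$ and taking $\lambda = x^{1/p}$ gives the identity $F_j(x, \alpha_0 x^{q/p}) = x^{j/p} F_j(1, \alpha_0)$. Since $y_i^*(x) = \alpha_0 x^{q/p}(1 + o(1))$ and each $F_j$ is a polynomial in $(x,y)$, a short continuity argument at the level of each monomial $x^a y^b$ with $pa + qb = j$ yields $F_j(x, y_i^*(x)) = x^{j/p} F_j(1, \alpha_0) + o(x^{j/p})$. Summing over $j \geq s$ and isolating the $x^{s/p}$-coefficient of $F(x, y_i^*(x)) \equiv 0$ then forces $F_s(1, \alpha_0) = 0$.

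Next I would identify the relevant descending segment of $\mathbf{N}(F)$ and match this vanishing to the determining polynomial $\mathcal{P}$ of \eqref{defining-eq}. Because $F_s$ is $(p,q)$-quasihomogeneous of degree $s$, every monomial $f_{ij} x^i y^j$ in its support satisfies $pi + qj = s$; in the $(j,i)$-plane of subsection \ref{Sec-IB1} these points all lie on a line of slope $-q/p$, the descending segment of $\mathbf{N}(F)$ with slope $-k_1/k_2$ for $(k_1,k_2) = (q,p)$. Ordering its support as $(m_l, \hat{m}_l)$ with $m_1 < \cdots < m_p$, every such point satisfies $m_l k_1 + \hat{m}_l k_2 = m_l q + \hat{m}_l p = s$, and therefore $F(x^p, x^q \eta) = x^s F_s(1, \eta) + \mathcal{O}(x^{s+1})$. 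Comparing with \eqref{defining-eq} produces the factorization $F_s(1, \eta) = \eta^{m_1} \mathcal{P}(\eta)$, the factor $\eta^{m_1}$ recording that the lowest $y$-power occurring in $F_s$ is $m_1$.

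Combining the two steps with $\alpha_0 \neq 0$ (built into the normal form \eqref{branch-first}) yields $\mathcal{P}(\alpha_0) = 0$, as required. I do not expect any substantial obstacle: the only care needed is the bookkeeping between the convention for $\mathbf{N}(F)$ of subsection \ref{Sec-IB1}, whose support points are written as $(j,i)$, and the $(p,q)$-quasihomogeneity convention for the expansion of $F$. Once these are aligned the argument is just a direct substitution followed by extraction of the leading $x$-exponent.
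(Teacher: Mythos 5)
Your argument is correct and takes essentially the same route as the paper's proof: both reduce the claim to $F_s(1,\alpha_0)=0$ via the quasihomogeneity identity $F_s(x,\eta x^{q/p})=x^{s/p}F_s(1,\eta)$, and then compare $F(x^p,\eta x^q)=x^s\,(F_s(1,\eta)+O(x))$ with \eqref{defining-eq} to obtain $F_s(1,\eta)=\eta^{m_1}\mathcal{P}(\eta)$. The only difference is that you justify by direct substitution of the full Puiseux series the step the paper simply asserts, namely that the leading term $\alpha_0 x^{q/p}$ of the branch must annihilate the leading quasihomogeneous part $F_s$.
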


\begin{proof}
Since $y_i^*$ is a branch of $F$ it follows that its leading term $\alpha_0 x^{q/p}$ must be a branch of $F_s$, that is, $F_s(x, \alpha_0 x^{q/p}) \equiv 0$. Since $F_s(x^p, \eta x^{q}) = x^{s} F_s(1, \eta)$ by the $(p,q)$-quasihomogeneity of $F_s$, we get $F_s(x, \eta x^{q/p}) = x^{s/p} F_s(1, \eta)$ and therefore we see that $\alpha_0$ must be a root of the polynomial $F_s(1, \eta)$. Indeed, comparing \eqref{defining-eq} where $k_1/k_2 = q/p$ with $F(x^p, \eta x^q) = x^s (F_s(1, \eta) + O(x))$ we see that $s=m_1 q+ \hat{m}_1 p$ and $F_s(1, \eta)= \eta^{m_1} \mathcal{P}(\eta)$.
\end{proof}

\begin{proposition}\label{clever-prop}
Let $\mathcal{X}$ be real analytic planar vector field with coprime components and a monodromic singularity at the origin. We fix the weights $(p,q) \in W(\mathbf{N}(\mathcal{X}))$ that defines the polynomial $\mathcal{Q}$ of degree $d$ by \eqref{det-pol-2}. Then there is a subset $\mathcal{R} = \{ \alpha_0^{(1)}, \alpha_0^{(2)}, \ldots, \alpha_0^{(\zeta)} \}$ of all roots of $\mathcal{Q}$ that are leading coefficients of invariant branches $y_j^*(x) = \alpha_0^{(j)} x^{q/p} + \cdots$ of $\mathcal{X}$ for $j=1, \ldots, \zeta$. Then $F(x,y) = \prod_{j=1}^\zeta  (y-y_j^*(x)) = 0$ is an invariant curve of $\mathcal{X}$ and $F$ has an analytic factor. Moreover, if $F$ itself were analytic then the associated determining polynomial $\mathcal{P}$ of $F$ is just $\mathcal{Q}$ provided that all the roots of $\mathcal{Q}$ are in $\mathcal{R}$.
\end{proposition}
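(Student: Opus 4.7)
The plan is to split the proof into the three assertions of the statement. For the first, that each $\alpha_0^{(j)}$ is a root of $\mathcal{Q}$, I would substitute the invariant branch $y_j^*(x)=\alpha_0^{(j)}x^{q/p}+o(x^{q/p})$ into the branch--invariance equation \eqref{eq-inv-branch} and isolate the lowest $(p,q)$-quasihomogeneous term; by the very definition \eqref{det-pol-2} this lowest term equals $\mathcal{Q}(\alpha_0^{(j)})\,x^{(r+q)/p}$ and must vanish, so $\mathcal{Q}(\alpha_0^{(j)})=0$. This identifies $\mathcal{R}$ as the subset of roots of $\mathcal{Q}$ that admit a Newton--Puiseux continuation to a full invariant branch. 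For the invariance of $F=\prod_{j=1}^{\zeta}(y-y_j^*)$, I would invoke the fact recalled in section \ref{Sec-IB2} that each factor is individually invariant with a complex cofactor $\kappa_j\in\mathbb{C}\{x,y\}$; the Leibniz rule then yields $\mathcal{X}(F)=\big(\sum_{j}\kappa_j\big)F$, so $F=0$ is an invariant curve with cofactor $K=\sum_{j}\kappa_j$.

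The hard part will be exhibiting an analytic factor of $F$. My approach is to exploit the Galois action on Puiseux series: writing an invariant branch in its uniformizing variable as $y^*(x)=\phi(x^{1/n})$ with $\phi\in\mathbb{C}\{\sigma\}$, the substitution $\sigma\mapsto\omega^{k}\sigma$ (with $\omega$ a primitive $n$-th root of unity) in the equation satisfied by $\phi$, combined with $\omega^{kn}=1$, shows that all Newton--Puiseux conjugates $\phi(\omega^{k}x^{1/n})$ are again invariant branches of $\mathcal{X}$ sharing the leading exponent $q/p$. Hence their leading coefficients also lie in $\mathcal{R}$, and a suitable choice of representatives $y_j^*$ (one per element of $\mathcal{R}$) can be arranged so that the product defining $F$ contains at least one full Galois orbit. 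The sub-product $\prod_{k}(y-\phi(\omega^{k}x^{1/n}))$ over that orbit has coefficients that are symmetric functions of the $\phi(\omega^{k}x^{1/n})$, hence invariant under $\sigma\mapsto\omega\sigma$ and therefore analytic in $x$; this yields the desired analytic factor. An equivalent route invokes the result from \cite{AGR3} recalled in subsection \ref{subsec-equiv-class}: every irreducible formal invariant curve at the origin admits an equivalent irreducible analytic invariant curve, which applied to the formal curve carried by any branch $y_j^*$ produces an analytic divisor of $F$.

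For the last assertion I would assume $F$ itself is analytic and that every root of $\mathcal{Q}$ lies in $\mathcal{R}$. The preceding proposition gives $F_s(1,\eta)=\eta^{m_1}\mathcal{P}(\eta)$, so the nonzero roots of $\mathcal{P}$ are exactly the leading coefficients appearing in the Newton--Puiseux factorization of $F$, namely the elements of $\mathcal{R}$ with the multiplicities produced by branches sharing a leading coefficient. Combined with the same proposition's statement that every such leading coefficient is also a root of $\mathcal{Q}$, this yields $\mathrm{roots}(\mathcal{P})\subseteq\mathrm{roots}(\mathcal{Q})$; the hypothesis supplies the reverse inclusion of root sets, and after matching multiplicities through the factorization of $F$ via $F_s$ and normalizing the leading coefficient of $\eta$ by comparing \eqref{defining-eq} with \eqref{det-pol-2}, I would conclude $\mathcal{P}\equiv\mathcal{Q}$. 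The most delicate book-keeping in the plan is the passage between leading coefficients and branches in the Galois-orbit argument, since the map from branches to leading coefficients may be many-to-one; handling this correctly is where the main obstacle lies.
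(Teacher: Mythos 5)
Your treatment of the invariance of $F$ (Leibniz rule on the cofactors), of the Galois/conjugacy argument for the analytic factor, and of the identification $\mathcal{P}=\mathcal{Q}$ via $F_s(1,\eta)=\eta^{m_1}\mathcal{P}(\eta)$ all match the paper's proof in substance; where the paper cites Lemmas 1.2.3 and 1.2.5 of \cite{C-A} to say that the branches of an irreducible analytic germ form a full conjugacy class and that their product recovers the germ, you reprove the same facts directly by the substitution $\sigma\mapsto\omega^k\sigma$ and the symmetric-function argument, which is a legitimate and more self-contained route. You also correctly flag the many-to-one book-keeping between branches and leading coefficients, which the paper itself glosses over.

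There is, however, one genuine gap: you never establish that $\mathcal{R}$ is non-empty, i.e.\ that $\mathcal{X}$ has any invariant branch at all with leading exponent $q/p$. The clause ``Then there is a subset $\mathcal{R}$\dots'' is an existence claim, and your entire construction (Galois orbits of a branch, the sub-product over an orbit, the alternative route through \cite{AGR3}) presupposes that at least one invariant branch $y_j^*$ is already in hand; the result of \cite{AGR3} only upgrades a given formal invariant curve to an analytic one, it does not produce one. The paper fills exactly this hole by opening the proof with Theorem \ref{teo-F-exists} (Camacho--Sad applied to the complexification): the coprimality and monodromy hypotheses yield an irreducible analytic invariant curve $f_1=0$, whose branches are automatically invariant branches of $\mathcal{X}$ with leading coefficients that are roots of $\mathcal{Q}$, so $\mathcal{R}\neq\emptyset$ and $f_1$ itself serves as the analytic factor of $F$. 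Inserting that one invocation at the start of your argument closes the gap; the rest of your plan then goes through essentially as the paper's does.
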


\begin{proof}
By the analyticity and coprimality of $\mathcal{X}$ and the monodromy of the origin we can apply Theorem \ref{teo-F-exists} and conclude that there is an irreducible analytic invariant curve $f_1(x,y)=0$ of $\mathcal{X}$. Therefore all the branches of $f_1$ are invariant branches of $\mathcal{X}$ and the set $\mathcal{R}$ is non-empty. We pick up a branch of $f_1$, say $y_1^*(x)$, whose Puiseux expansion is $y_1^*(x) = x^{q/p} \sum_{j \geq 0} \alpha_j^{(1)} x^{j/n_1}$ with positive index $n_1 \in \mathbb{N}$ and $0 < q/p = \gamma/n$ and $\gamma \in \mathbb{N}$. Therefore we can write $y_1^*(x) = \sum_{j \geq \gamma} \alpha_{j-\gamma}^{(1)} \, x^{j/n_1}$. For each $n_1$-th root of the unity $\varepsilon_k$ (thus $\varepsilon_k^{n_1} = 1$ with $k=0, \ldots, n_1-1$) we define the conjugated series of $y_1^*$ as $\sigma_{\varepsilon_k}(y_1^*)(x) = \sum_{j \geq \gamma} \varepsilon_k^j \, \alpha_{j-\gamma}^{(1)} \, x^{j/n_1}$. The set of all conjugates of $y_1^*$ will be called the conjugacy class $\mathbf{C}(y_1^*)$ of $y_1^*$, that is,
$$
\mathbf{C}(y_1^*) = \{ y_1^* = \sigma_{\varepsilon_0}(y_1^*), \sigma_{\varepsilon_1}(y_1^*), \ldots, \sigma_{\varepsilon_{n_1-1}}(y_1^*) \}
$$
and its cardinality is $\# \mathbf{C}(y_1^*) = n_1$. We emphasize that it may happen that $y_1^*$ remains invariant under the map $\sigma_{\varepsilon_k}$ for some $k$.

By Lemma 1.2.3 of \cite{C-A} we know that if $y_1^*(x)$ is a branch of the analytic function $f(x,y)$ then all conjugates of $y_1^*(x)$ are branches of $f$ too. Thus $\mathbf{C}(y_1^*)$ is a set formed by branches of $f(x,y)$ having the same index $n_1$. Moreover, by Lemma 1.2.5 of \cite{C-A} we get
$$
f_1(x,y) = \prod_{k=0}^{n_1-1} (y-\sigma_{\varepsilon_k}(y_1^*)(x)).
$$
By construction $f_1$ is an analytic irreducible factor of $F$ and this proves the first part of the proposition.
\newline

Notice that $F$ would be analytic only in case that there is a partition of $\mathcal{R}$ where each class corresponds to leading coefficients of branches belonging to the same conjugacy class. In this case $F$ has  associated a determining polynomial $\mathcal{P}$ whose roots are the leading terms of all the branches of $F$, that is, $\alpha_0^{(j)}$ with $j=1, \ldots, \zeta$ are roots of $\mathcal{P}$. Thus if all the roots of $\mathcal{Q}$ are in $\mathcal{R}$ then $\mathcal{P}=\mathcal{Q}$.
\end{proof}

\begin{proposition}
Let $y_i^*(x) = \alpha_0 x^{q/p} + \cdots$ with $(p,q) \in W(\mathbf{N}(\mathcal{X}))$ be an invariant branch of $\mathcal{X}$. If $y_i^*$ comes from a irreducible invariant analytic curve $F(x,y)=0$ then the index $n_i$ of $y_i^*$ is bounded by $p \leq n_i \leq d = \deg(\mathcal{Q})$. Moreover, if $d$ and $p$ are even then $n_i$ is also even and $d$ divides $n_i$.
\end{proposition}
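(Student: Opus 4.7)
The strategy is to combine the structural constraints on Puiseux expansions with the $(p,q)$-quasihomogeneous Darboux-type relation between $F_s$ and the inverse integrating factor $V$ of $\mathcal{X}_r$ from Remark \ref{V-Xr}, in order to extract two-sided bounds on the index $n_i$.

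First I would establish the lower bound $p \le n_i$. By the very definition of the index, the Puiseux series
$$y_i^*(x) = \sum_{j \ge 0} \alpha_j \, x^{q/p + j/n_i}$$
is analytic in the local uniformizer $\sigma = x^{1/n_i}$, so every exponent appearing must belong to $(1/n_i)\,\mathbb{N}$. In particular $q/p = \gamma/n_i$ for some $\gamma \in \mathbb{N}$, and the coprimality $\gcd(p,q) = 1$ forces $p \mid n_i$.

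For the upper bound $n_i \le d$, the irreducibility of $F$ is essential: by the discussion preceding Proposition \ref{clever-prop}, the set of branches of $F$ at the origin is exactly the single conjugacy class $\mathbf{C}(y_i^*)$ of cardinality $n_i$, generated by the cyclic action $\varepsilon \mapsto \sigma_\varepsilon(y_i^*)$ of $n_i$-th roots of unity. Writing $n_i = pm$ and $\gamma = qm$, we get $\gcd(\gamma, n_i) = m$, so the $n_i$ conjugates distribute exactly $m$-to-one onto the $p$ distinct $p$-th roots of $\beta := \alpha_0^p$, which form the set of leading coefficients of branches of $F$. Since all branches share the leading exponent $q/p$, the $(p,q)$-leading part $F_s$ has no $x$ or $y$ factor, and its determining polynomial $\mathcal{P}$ (realised as $F_s(1,\eta)$ up to a power of $\eta$ by the previous proposition) factors as $c(\eta^p-\beta)^m$; equivalently
$$F_s(x,y) = c\,(y^p - \beta x^q)^m.$$
A direct substitution using the $(p,q)$-quasihomogeneity of $\mathcal{X}_r$ yields
$$V(x, \eta x^{q/p}) = -p\, x^{1 + (q+r)/p}\, \mathcal{Q}(\eta),$$
so the factorization of $\mathcal{Q}$ in $\mathbb{C}[\eta]$ mirrors that of $V$ into its irreducible $(p,q)$-quasihomogeneous pieces $y^p - \beta_k x^q$. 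By Remark \ref{V-Xr} the factor $y^p - \beta x^q$ of $F_s$ divides $V$; I would then upgrade this to the multiplicity statement $(y^p - \beta x^q)^m \mid V$ using the Darboux-theoretic multiplicity principle relating an invariant algebraic curve to an inverse integrating factor of a quasihomogeneous vector field. Translating this divisibility via the displayed identity gives $(\eta^p - \beta)^m \mid \mathcal{Q}(\eta)$, and since $\eta^p - \beta$ has exactly $p$ distinct roots, one obtains $d = \deg \mathcal{Q} \ge pm = n_i$.

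Finally, for the parity addendum: since $p \mid n_i$ is already established, the implication ``$p$ even $\Rightarrow n_i$ even'' is immediate. For the stronger divisibility conclusion under the additional hypothesis that $d$ is even, the plan is to refine the multiplicity count in $\mathcal{Q}$ by invoking the real structure of $\mathcal{X}$, which pairs conjugate branches $y_i^* \leftrightarrow \overline{y_i^*}$ and hence forces the $\beta_k$'s (and their multiplicities $e_k$) in the factorization of $V$ to appear in conjugate pairs; combined with the even parity of both $p$ and $d$, this symmetry doubles the relevant orbit lengths and produces the claimed divisibility. The main obstacle, and the step I expect to need the most care, is precisely the multiplicity upgrade from $(y^p - \beta x^q) \mid V$ to $(y^p - \beta x^q)^m \mid V$: Remark \ref{V-Xr} asserts divisibility only at the level of irreducible factors, so the full Darboux-theoretic relation between $F_s$ and $V$ must be deployed cleanly to recover the exponent $m$.
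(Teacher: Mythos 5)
Your lower bound is fine and coincides with the paper's: analyticity in the uniformizer $\sigma=x^{1/n_i}$ forces $q/p\in(1/n_i)\mathbb{N}$, and coprimality of $p$ and $q$ gives $p\mid n_i$. The problem is the upper bound. Your route through the inverse integrating factor $V$ of $\mathcal{X}_r$ hinges entirely on upgrading the divisibility $(y^p-\beta x^q)\mid V$ of Remark \ref{V-Xr} to $(y^p-\beta x^q)^m\mid V$ with $m=n_i/p$, and you do not prove this step --- you only announce that ``the full Darboux-theoretic relation \dots must be deployed cleanly to recover the exponent $m$.'' That is precisely the load-bearing claim: without it you get only $p\mid d$ and $p\le d$, not $n_i\le d$. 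The cited remark (and the result of Algaba \emph{et al.} behind it) gives divisibility of $V$ by the irreducible factors of $F_s$, with no multiplicity information, and a multiplicity statement of this kind is a genuinely stronger assertion that would itself require an argument (note for instance that it imposes the degree inequality $m\,pq\le p+q+r$ on quasihomogeneous degrees, which is not something you have verified). The same criticism applies to the parity addendum, where ``this symmetry doubles the relevant orbit lengths and produces the claimed divisibility'' is a plan, not a proof. So as written the proposal has a gap at its central step.

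For comparison, the paper's own argument avoids $V$ entirely and is a direct count: since $F$ is irreducible, its branches form exactly the conjugacy class $\mathbf{C}(y_i^*)$, whose cardinality is the index $n_i$; every conjugate is an invariant branch of $\mathcal{X}$ attached to the same edge of $\mathbf{N}(\mathcal{X})$, so its leading coefficient is a root of the polynomial $\mathcal{Q}$ of \eqref{det-pol-2}, whence $n_i=\#\mathbf{C}(y_i^*)\le d$, giving \eqref{bound-ni}. (You correctly observe that the $n_i$ conjugates hit only $p$ distinct leading coefficients, each $m$ times, so even this count implicitly uses that roots of $\mathcal{Q}$ are weighted with multiplicity; but the paper's mechanism --- counting branches in the conjugacy class against roots of $\mathcal{Q}$ --- is the intended one, and it never needs the factorization of $V$.) If you want to salvage your approach, the cleanest fix is to abandon $V$ and argue as the paper does; otherwise you must actually establish the multiplicity upgrade, which is the hard part you have deferred.
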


\begin{proof}
We know that $n_i = \lambda p$ with $\lambda \in \mathbb{N}$. All the invariant branches of $F$ belong to the conjugacy class $\mathbf{C}(y_i^*)$, hence all of them share the same leading exponent $q/p$ and index $n_i$. In particular all the elements in $\mathbf{C}(y_i^*)$ are invariant branches of $\mathcal{X}$ associated to the same segment of $\mathbf{N}(\mathcal{X})$ and consequently its associated leading coefficients are roots of $\mathcal{Q}$. This implies that the cardinality $\# \mathbf{C}(y_i^*)$ of $\mathbf{C}(y_i^*)$ is bounded by $d$ and we get
\begin{equation}\label{bound-ni}
p \leq n_i = \# \mathbf{C}(y_i^*) \leq d.
\end{equation}
Clearly when $d$ and $p$ are even then $n_i$ is also even and, by \eqref {bound-ni} $d$ divides $n_i$.
\end{proof}

\begin{remark} \label{Demina-remark}
{\rm There is a general method, see \cite{Br,Bru,De}, to know which is the index $n$ of an invariant branch $y^*$ and here we only sketch a part of it using our notation. To each $(p,q) \in W(\mathbf{N}(\mathcal{X}))$ we have the $(p,q)$-quasihomogeneous expansion $\mathcal{X} = \mathcal{X}_r + \cdots$ given in \eqref{campo-X} with
\[
\mathcal{X}_r = P_{p+r}(x,y) \partial_x + Q_{q+r}(x,y) \partial_y
\]
and we associate the dominant balance $E_0[y(x), x] = P_{p+r}(x,y) y' - Q_{p+r}(x,y)$ where the prime indicates derivative with respect to $x$. This dominant balance satisfies $E_0[\alpha_0 x^{q/p}, x] \equiv  0$ where $y^*(x)= \alpha_0 x^{q/p} + \cdots$. Now we calculate the formal G\^{a}teaux derivative of the dominant balance at $y(x) = \alpha_0 x^{q/p}$, that is,
\begin{eqnarray*}
\frac{\delta E_0}{\delta y}[\alpha_0 x^{q/p}] &=& \lim_{s \to 0} \frac{E_0[\alpha_0 x^{q/p} + s x^{q/p+j},x] - E_0[\alpha_0 x^{q/p}, x]}{s} = V(j) x^{\beta},
\end{eqnarray*}
for certain exponent $\beta(j, p, q)$. The zeros of $V(j)$ are called the {\it Fuchs indices} of the dominant balance. In the particular case that there are no Fuchs indices in $\mathbb{Q}^+ \backslash \mathbb{N}$ (non-natural positive rationals numbers) then the branch $y^*$ has index $n = p$. }
\end{remark}

\begin{remark} \label{simple+rotation-remark}
{\rm We also want to emphasize that there are other simpler ways than the explained in Remark \ref{Demina-remark} to conclude that $n=p$. The first method is based in Proposition \ref{clever-prop} and works when $\mathcal{P} = \mathcal{Q}$ because in that case the branch whose leading coefficient is a simple root of $\mathcal{Q}$ is just a simple branch and therefore $n=p$. A second method is based on the fact that, given a root $\alpha_0$ of $\mathcal{Q}$, sometimes it is possible to detect if it is a leading coefficient of a branch $y^*$ that belongs to a specific conjugacy class $\mathbf{C}(y^*)$ whose elements have leading coefficients that are also roots of $\mathcal{Q}$. Once $\mathbf{C}(y^*)$ is known we can easily determine the index $n$ of all the branches in $\mathbf{C}(y^*)$. }
\end{remark}

\section{The weighted polar blow-up} \label{S3}

The {\it weighted polar} blow-up $(x,y) \mapsto (\rho, \varphi)$ given by
\begin{equation}\label{wpbu}
x= \rho^p \cos\varphi, \ \ y = \rho^q \sin\varphi,
\end{equation}
wit Jacobian $J(\varphi, \rho) = \rho^{p+q-1}(p \cos^2\varphi + q \sin^2\varphi)$, brings the $(p,q)$-quasi\-ho\-mo\-ge\-neous vector field $\mathcal{X}_j = P_{p+j}(x,y) \partial_x + Q_{q+j}(x,y) \partial_y$ of degree $j$ into the system
$$
\dot{\rho} = \frac{\rho^{j+1} F_j(\varphi)}{D(\varphi)}, \ \ \dot{\varphi} = \frac{\rho^{j} G_j(\varphi)}{D(\varphi)},
$$
where
\begin{eqnarray} \label{def-GjFjD}
F_j(\varphi) &=& P_{p+j}(\cos\varphi, \sin\varphi) \cos\varphi + Q_{q+j}(\cos\varphi, \sin\varphi) \sin\varphi, \nonumber \\
G_j(\varphi) &=& p \, Q_{q+j}(\cos\varphi, \sin\varphi) \cos\varphi -q \, P_{p+j}(\cos\varphi, \sin\varphi) \sin\varphi,  \\
D(\varphi) &=& p \cos^2\varphi + q \sin^2\varphi > 0 \nonumber.
\end{eqnarray}
When we transform $\mathcal{X}$ under the weighted polar blow-up \eqref{wpbu}, the selected weights $(p,q)$ must be such that the minimum degree $r$ in the expansion \eqref{campo-X} of $\mathcal{X}$ be $r \geq 0$ in order to obtain an analytic polar vector field at $\rho=0$. The former is guaranteed because we always choose $(p,q) \in W(\mathbf{N}(\mathcal{X}))$. Therefore, after removing the common factor $\rho^{r} / D(\varphi) > 0$, $\mathcal{X}$ is transformed into the form
\begin{equation}\label{system-w-polares}
\dot{\rho} = R(\varphi, \rho) = \sum_{j \geq r} \rho^{j-r+1} F_j(\varphi), \ \ \dot{\varphi} = \Theta(\varphi, \rho) = \sum_{j \geq r} \rho^{j-r} G_j(\varphi).
\end{equation}

\begin{remark} \label{rem-choose-pq}
{\rm We want to mention that there are other weights not belonging to $W(\mathbf{N}(\mathcal{X}))$ producing a leading degree $r \geq 0$ of $\mathcal{X}$, but we only consider weights lying in $W(\mathbf{N}(\mathcal{X}))$ because they are necessary and sufficient to study the monodromy of a singularity of $\mathcal{X}$ which is the only setting we are interested in. Moreover it is clear that for any invariant analytic curve $F(x,y)=0$ of $\mathcal{X}$ it must occur that the weights $(p,q)$ of $\mathbf{N}(F)$ must be also weights of $\mathbf{N}(\mathcal{X})$.
}
\end{remark}

We define the sufficiently small cylinder
$$
C  \, = \, \left\{ (\theta, \rho) \in \mathbb{S}^1 \times \mathbb{R} \, : \, 0 \leq \rho \ll 1 \right\} \ \mbox{with } \mathbb{S}^1 = \mathbb{R}/ (2 \pi \mathbb{Z}),
$$
and we consider the ordinary differential equation of the orbits of \eqref{system-w-polares}, that is,
\begin{equation}\label{eq3*}
\frac{d \rho}{d \varphi} \, = \, \mathcal{F}(\varphi, \rho) = \sum_{i \geq 1} \mathcal{F}_i(\varphi) \rho^i,
\end{equation}
where $\mathcal{F}_1(\varphi) = F_r(\varphi)/G_r(\varphi)$. Then $\mathcal{F}$ is a function well defined in $C \backslash \Theta^{-1}(0)$. We say that $\varphi = \varphi^*$ is a $(p,q)$--{\it characteristic direction} for the origin of the vector field $\mathcal{X}$ if $G_r(\varphi^*)=0$ and we define $\Omega_{pq}$ as the set of all the $(p,q)$--characteristic directions:
$$
\Omega_{pq} = \{ \varphi^* \in \mathbb{S}^1 : G_r(\varphi^*) = 0 \} = \{ \varphi^*_1, \ldots, \varphi^*_\ell \}.
$$
Since $\dot{\rho} = O(\rho)$, $\dot{\varphi} = G_r(\varphi) + O(\rho)$, we see that $\{\rho=0 \}$ is an invariant set of \eqref{system-w-polares} which is either a periodic orbit or a polycycle of \eqref{eq3*}.

\begin{definition}\label{Mo-pq}
Given some weights $(p,q) \in W(\mathbf{N}(\mathcal{X}))$, we say that the vector field \eqref{campo-X} belongs to the monodromic class ${\rm Mo}^{(p,q)}$ if its leading $(p,q)$-quasihomoge\-neous vector field $\mathcal{X}_r$ has a monodromic singularity at the origin.
\end{definition}

\begin{remark} \label{remark-Mopq}
{\rm Notice that $\mathcal{X} \in {\rm Mo}^{(p,q)}$ is equivalent to have $\Omega_{pq} = \emptyset$ and therefore $\{\rho=0\}$ is a periodic orbit whose associated Poincar\'e map $\Pi$ is analytic. We also want to remark that if  $\mathcal{X} \in {\rm Mo}^{(p,q)}$ then $\mathbf{N}(\mathcal{X})$ cannot possess more than one edge. The reason is because if there is an interior vertex (a vertex of $\mathbf{N}(\mathcal{X})$ not lying on the axis) then the leading vector field $\mathcal{X}_r$ associated to each coincident edge has either $x=0$ or $y=0$ invariant line, see for example \cite{AGR}.}
\end{remark}

\section{Integral of the cofactor} \label{S6}

Let $F=0$ be an invariant curve of $\mathcal{X}$ with cofactor $K$. Assume that $\mathcal{Y}$ is orbitally equivalent to $\mathcal{X}$, that is, $\mathcal{Y} = \phi_*(\mu \mathcal{X})$ where $\phi_*$ denotes the pull-back of the transformation $\phi$. Then $F \circ \phi = 0$ an invariant curve of $\mathcal{Y}$ with cofactor $(K \mu) \circ \phi$. In the particular case that $\phi(\varphi, \rho) = (\rho^p \cos\varphi, \rho^q \sin\varphi)$ the equation $\mathcal{X}(F) = K F$ is transformed into $\hat{\mathcal{X}}(\hat{F}) = \hat{K} \hat{F}$ where $\hat{\mathcal{X}} = \partial_\varphi + \mathcal{F}(\varphi, \rho) \partial_\rho$, $\hat{F} = F \circ \phi$ and $\hat{K} = (K \circ \phi) D / (\rho^r \Theta)$, that is,
\begin{equation}\label{def-Khat}
\hat{K}(\varphi, \rho) = \frac{D(\varphi) K(\rho^p \cos\varphi, \rho^q \sin\varphi)}{\rho^r \Theta(\varphi, \rho)}.
\end{equation}

\subsection{Proof of Theorem \ref{Th-centers-pq-period}} \label{Sec-PT2}

\begin{proof}
Let $F(x,y) = 0$ be the analytic invariant curve of $\mathcal{X}$ with $F(0,0)=0$. Of course $(x,y)=(0,0)$ is an isolated real zero of $F(x,y)$ by the monodromy of $\mathcal{X}$ at the origin. Let $F(x,y) = F_{s}(x,y) + \cdots$ with $F_s(x,y) \not\equiv 0$ be the $(p,q)$-quasihomogeneous expansion of $F$ with $(p,q) \in W(\mathbf{N}(\mathcal{X}))$. Since $(x,y)=(0,0)$ is an isolated real zero of $F$, $s \geq 1$ and we consider the expression
\begin{equation}\label{F-polar-expression}
\hat{F}(\varphi, \rho) = F(\rho^p \cos\varphi, \rho^q \sin\varphi) = \rho^s \, [F_{s}(\cos\varphi, \sin\varphi) + o(\rho)].
\end{equation}
We claim that the eventual real roots of $F_s(\cos\varphi, \sin\varphi)$ must belong to $\Omega_{pq}$. The reason is that $\hat{F}(\varphi, \rho) / \rho^s = 0$ is an invariant curve of the differential equation \eqref{eq3*} that only can intersect the polycycle $\rho=0$ at its singularities and the intersection is isolated by monodromy.

Therefore $\hat{F}(\varphi, \rho)$ has an isolated zero at $\rho=0$ for any $\varphi \in \mathbb{S}^1$ and there is an interval $I = (0, m] \subset \mathbb{R}$ such that the restriction $\hat{F}|_{\mathbb{S}^1 \times I} \neq 0$. Moreover, $\hat{F}(\varphi, \rho)$ is a composition of analytic functions so analytic itself.

The flow $\rho(\varphi; \rho_0)$ is bounded for any $\rho_0 > 0$ sufficiently small and $\varphi \in  \mathbb{S}^1$ by monodromy again. Then there is a function $M(\rho_0)$ such that $0 < \rho(\varphi; \rho_0) < M(\rho_0)$. Clearly, by the uniqueness of the solutions of the Cauchy problem (\ref{eq3*}), taking a $\rho_0$ sufficiently small we obtain that $M(\rho_0) \in I$ and therefore $(\varphi, \rho(\varphi; \rho_0)) \in \mathbb{S}^1 \times I$. Consequently, the composition $\hat{F}(\varphi, \rho(\varphi; \rho_0))$ is also bounded and $\hat{F}(\varphi, \rho(\varphi; \rho_0)) \neq 0$ for any $\rho_0 > 0$ sufficiently small and $\varphi \in  \mathbb{S}^1$.
\newline

First we describe the simplest case assuming that $\Theta^{-1}(0) \backslash \{\rho = 0\} = \emptyset$. By the chain rule,
\begin{eqnarray*}
\frac{d}{d \varphi} \hat{F}(\varphi, \rho(\varphi; \rho_0)) &=& \frac{\partial \hat{F}}{\partial \varphi}(\varphi, \rho(\varphi; \rho_0)) + \frac{\partial \hat{F}}{\partial \rho}(\varphi, \rho(\varphi; \rho_0)) \frac{d \rho}{d \varphi}(\varphi; \rho_0) \\
 &=& \frac{\partial \hat{F}}{\partial \varphi}(\varphi, \rho(\varphi; \rho_0)) + \frac{\partial \hat{F}}{\partial \rho}(\varphi, \rho(\varphi; \rho_0)) \mathcal{F}(\varphi, \rho(\varphi; \rho_0))
\end{eqnarray*}
cannot diverge for any $\varphi \in \mathbb{S}^1$, and in particular
$$
\frac{\frac{d}{d \varphi} \hat{F}(\varphi, \rho(\varphi; \rho_0))}{\hat{F}(\varphi, \rho(\varphi; \rho_0))}
$$
is bounded for any $0 < \rho_0 \ll 1$ and $\varphi \in  \mathbb{S}^1$. The equation $\hat{\mathcal{X}}(\hat{F}) = \hat{K} \hat{F}$ with $\hat{\mathcal{X}} = \partial_\varphi + \mathcal{F}(\varphi, \rho) \partial_\rho$ gives
\begin{equation} \label{eq-der}
\frac{d}{d \varphi} \hat{F}(\varphi, \rho(\varphi; \rho_0)) = \hat{K}(\varphi, \rho(\varphi; \rho_0)) \hat{F}(\varphi, \rho(\varphi; \rho_0))
\end{equation}
and therefore
\begin{eqnarray*}
\int_{\hat\gamma_{\rho_0}} \hat K &=& \int_{0}^{2 \pi} \hat K(\varphi, \rho(\varphi; \rho_0)) \, d \varphi = \int_{0}^{2 \pi} \frac{\frac{d}{d \varphi} \hat{F}(\varphi, \rho(\varphi; \rho_0))}{\hat{F}(\varphi, \rho(\varphi; \rho_0))} \, d \varphi \\ &=& \bar{P}(2\pi; \rho_0) - \bar{P}(0; \rho_0),
\end{eqnarray*}
where
\begin{equation}\label{def-PP}
\bar{P}(\varphi; \rho_0) = \log |\hat{F}(\varphi, \rho(\varphi; \rho_0))|
\end{equation}
is a continuous primitive of the integrand on $[0, 2 \pi]$.
\newline

We continue assuming the worse situation in which $\Theta^{-1}(0) \backslash \{\rho = 0\} \neq \emptyset$. In particular,  for $\rho_0 \ll 1$, there exists $\bar{\varphi} \in \mathbb{S}^1$ such that
$$
(\bar\varphi, \rho(\bar\varphi; \rho_0)) \in \hat\gamma_{\rho_0} \cap \Theta^{-1}(0).
$$
Then the limit
$$
\lim_{\varphi \to \bar\varphi^-} \frac{d \rho}{d \varphi}(\varphi; \rho_0) = \lim_{\varphi \to \bar\varphi^-}  \mathcal{F}(\varphi, \rho(\varphi; \rho_0)) = \lim_{\varphi \to \bar\varphi^-}  \frac{R(\varphi, \rho(\varphi; \rho_0))}{\Theta(\varphi, \rho(\varphi; \rho_0))}
$$
may not exists. For this it is enough that $R(\bar\varphi, \rho(\bar\varphi; \rho_0)) \neq 0$.

Now from equation \eqref{eq-der} we get
\begin{eqnarray*}
\int_{\hat\gamma_{\rho_0}} \hat K &=& PV \int_{0}^{2 \pi} \hat K(\varphi, \rho(\varphi; \rho_0)) \, d \varphi = PV \int_{0}^{2 \pi} \frac{\frac{d}{d \varphi} \hat{F}(\varphi, \rho(\varphi; \rho_0))}{\hat{F}(\varphi, \rho(\varphi; \rho_0))} \, d \varphi.
\end{eqnarray*}
To compute this principal value we first introduce the set
$$
\bar\Omega_{pq}(\rho_0) = \{ \bar\varphi_1(\rho_0), \ldots, \bar\varphi_\kappa(\rho_0) \} \subset \mathbb{S}^1
$$
formed by all the angles $\bar\varphi_i(\rho_0)$ such that $(\bar\varphi_i(\rho_0), \rho(\bar\varphi_i(\rho_0); \rho_0)) \in \hat\gamma_{\rho_0} \cap \Theta^{-1}(0)$ for $i=1, \ldots, \kappa$. Now we define the set $\bar{I}_\varepsilon(\rho_0) = [0, 2 \pi] \backslash \bar{J}_\varepsilon(\rho_0)$ with $\bar{J}_\varepsilon(\rho_0) = \cup_{i=1}^\kappa (\bar\varphi_i(\rho_0)-\varepsilon, \bar\varphi_i(\rho_0)+\varepsilon)$ so that
\begin{eqnarray*}
\int_{\hat\gamma_{\rho_0}} \hat K  &=& \lim_{\varepsilon \to 0^+}  \int_{\bar{I}_\varepsilon(\rho_0)} \frac{\frac{d}{d \varphi} \hat{F}(\varphi, \rho(\varphi; \rho_0))}{\hat{F}(\varphi, \rho(\varphi; \rho_0))} \, d \varphi \\
 &=&  \lim_{\varepsilon \to 0^+} \sum_i \bar{P}(\bar\varphi_{i+1}(\rho_0) - \varepsilon; \rho_0) - \bar{P}(\bar\varphi_i(\rho_0) + \varepsilon; \rho_0) \\
 &=& \bar{P}(2\pi; \rho_0) - \bar{P}(0; \rho_0),
\end{eqnarray*}
where $\bar{P}(\varphi; \rho_0)$ was defined in \eqref{def-PP} and is a continuous primitive of the integrand on $\bar{I}_\varepsilon(\rho_0)$.
\newline

In summary, independently of whether $\Theta^{-1}(0) \backslash \{\rho = 0\}$ is empty or not,
\begin{eqnarray}\label{Int-cof*}
\int_{\hat\gamma_{\rho_0}} \hat K &=& \log |\hat F(2 \pi, \Pi(\rho_0))|  - \log |\hat F(0, \rho_0)| = \log \left| \frac{\hat F(0, \Pi(\rho_0))}{\hat F(0, \rho_0)} \right|,
\end{eqnarray}
where in the last step we have used that $\hat F$ is $2 \pi$-periodic in $\varphi$. Since $F(0,0)=0$ we claim that $\hat F(0, \rho)$ depends on $\rho$, hence clearly the former integral vanishes identically if and only if $\Pi(\rho_0) = \rho_0$, that is, the origin is a center finishing the proof. The former claim is true because, evaluating \eqref{F-polar-expression} at $\varphi=0$ we obtain that $\hat{F}(0, \rho) = \rho^s \, [F_{s}(1, 0) + o(\rho)]$ with $s \geq 1$ because $F(0,0)=0$. Here we have two possibilities, either $0 \not\in \Omega_{pq}$ so that $F_{s}(1, 0) \neq 0$ and the claim follows or $0 \in \Omega_{pq}$. This last option is removed arguing as follows: if $0 \in \Omega_{pq}$ then $\hat F(0, \rho)$ is independent on $\rho$ only when $\hat F(0, \rho) \equiv 0$ in which case it would exist an invariant ray $\{ \varphi =  0 \}$ which is forbidden by monodromy.

We end just by noticing that if $F(0,0) \neq 0$ then the previous argumentations can be false because $s=0$, and $\hat{F}(0, \rho) = F_{0} + o(\rho)$ with $F_{0} \neq 0$. It can occurs the phenomenon $\hat{F}(0, \rho) = F_{0}$, hence independent of $\rho$. For instance take $F(x,y)= 1+y$ to see that.
\end{proof}

\subsection{Proof of Corollary \ref{Corol-semidef}}  \label{Sec-cOR-SEMIDEF}

\begin{proof}[Proof of Corollary \ref{Corol-semidef}]
By definition \eqref{def-Khat} and \eqref{def-int-K} we have
\begin{equation}\label{eq-int-k}
\int_{\hat\gamma_{\rho_0}} \hat K = PV \int_{0}^{2 \pi} \frac{D(\varphi) K(\rho^p(\varphi; \rho_0) \cos\varphi, \rho^q(\varphi; \rho_0) \sin\varphi)}{\rho^r(\varphi; \rho_0) \Theta(\varphi, \rho(\varphi; \rho_0))}  \, d \varphi.
\end{equation}
Taking $\rho_0 > 0$ sufficiently small it follows that $\rho(\varphi; \rho_0) > 0$ and bounded for all $\varphi \in \mathbb{S}^1$. Moreover $\Theta(\varphi, \rho(\varphi; \rho_0)) \geq 0$ by the monodromy of $\mathcal{X}$ and clearly $D(\varphi) > 0$. In summary, if $K(x,y)$ is sign-defined in a neighborhood of the origin, then $\int_{\hat\gamma_{\rho_0}} \hat K \neq 0$ and the origin is a focus by Theorem \ref{Th-centers-pq-period}.
\end{proof}

\begin{remark}
{\rm If in the statement of Corollary \ref{Corol-semidef} we assume the stronger condition that $K$ is positive (or negative) defined in $\mathcal{U} \backslash \{(0,0)\}$ instead of only to be sign-defined in $\mathcal{U}$ then we have an alternative proof of it just by using a Lyapunov function argument as follows. Let $F=0$ be an analytic invariant curve of $\mathcal{X}$ predicted by Theorem \ref{teo-F-exists}. Since the curve has an isolated zero in $\mathbb{R}^2$ at the origin, there is a neighborhood $\mathcal{U}$ of the origin such that the function $F$ restricted to $\mathcal{U}$ is positive (or negative) defined in $\mathcal{U} \backslash \{(0,0)\}$. Since $K$ share that property by hypothesis, and taking into account that $\mathcal{X}(F) = K F$, we deduce that $F$ is indeed a Lyapunov function for $\mathcal{X}$ in $\mathcal{U}$ and consequently the origin is a focus. }
\end{remark}

\begin{remark}
{\rm We want to observe that the consequence of Corollary \ref{Corol-semidef} is not true in general if we only assume that the analytic cofactor $K(x,y)$ has a $(p, q)$-quasihomogeneous expansion $K(x,y) = \sum_{j \geq {\bar r}} K_j(x,y)$ with sign-defined leading term $K_{{\bar r}}$ because under this assumption, and given any continuous flow function $\rho: \mathbb{S}^1 \to (0, \varepsilon] \subset \mathbb{R}$, we cannot guarantee that
$$
\int_{0}^{2 \pi} K(\rho^p(\varphi; \rho_0) \cos\varphi, \rho^q(\varphi; \rho_0) \sin\varphi) \, d \varphi \neq 0
$$
for all $\varepsilon > 0$ small enough (equivalently for all $\rho_0>0$ small enough). The reason is because, although the function $K(\rho^p(\varphi; \rho_0) \cos\varphi, \rho^q(\varphi; \rho_0) \sin\varphi)$ adopts the form $\rho^d(\varphi; \rho_0) K_{\bar r}(\cos\varphi, \sin\varphi) + \rho^{d+1}(\varphi; \rho_0) R(\varphi, \rho(\varphi; \rho_0))$ with $K_{\bar r}(\cos\varphi, \sin\varphi)  \geq 0$ for all $\varphi \in \mathbb{S}^1$, the term $|R(\varphi, \rho(\varphi; \rho_0))|$ may not have a uniform upper bound in $\mathbb{S}^1$ independent of $\rho_0$. }
\end{remark}

\begin{remark}\label{re-no-exteder-r0}
{\rm  It is worth to emphasize that expression \eqref{eq-int-k} cannot be extended by continuity at $\rho_0=0$. If that extension were possible, using that $\rho(\varphi; 0) = 0$, then we would have
$$
\int_{\{ \rho_0=0 \}} \hat K = \left\{ \begin{array}{lll}
                                        0 & \mbox{if} & \bar{r} > r, \\
                                        \displaystyle PV \int_0^{2\pi} \frac{D(\varphi) \, K_{\bar{r}}(\cos\varphi, \sin\varphi)}{G_r(\varphi)} d \varphi & \mbox{if} & \bar{r} =r,
                                       \end{array}  \right.
$$
in case that this principal value exists. Moreover from the $(p,q)$-quasihomo\-geneous expansion $F(x,y) = F_{s}(x,y) +  \cdots$ we could use equation \eqref{Int-cof*} to express
\begin{eqnarray*}
\int_{\hat\gamma_{\rho_0}} \hat K &=& \log \left| \frac{\hat F(0, \Pi(\rho_0))}{\hat F(0, \rho_0)} \right| = \log \left| \frac{\Pi^s(\rho_0) F_s(\cos\varphi, \sin\varphi)) + O(\Pi^{s+1}(\rho_0))}{\rho_0^s (F_s(\cos\varphi, \sin\varphi) + O(\rho_0))} \right|
\end{eqnarray*}
whose extension to $\rho_0=0$ would give
$$
\int_{\{ \rho_0=0 \}} \hat K = \log(\eta_1^s)
$$
taking into account that $\Pi(\rho_0) = \eta_1 \rho_0 + o(\rho_0)$. Comparing both expressions of $\int_{\{ \rho_0=0 \}} \hat K$ we would have that if $\bar{r} > r$ then $\eta_1 = 1$ whereas
  $$
  \log\left( \eta_1^s \right) = PV \int_0^{2\pi} \frac{D(\varphi) \, K_{\bar{r}}(\cos\varphi, \sin\varphi)}{G_r(\varphi)} d \varphi,
  $$
  when $\bar{r} = r$ and this principal value exists. We can see that this expression of $\eta_1$ is wrong in several examples, see for instance Remark \ref{rem-NO-extension}. }
\end{remark}

\section{Vector fields in the monodromic class ${\rm Mo}^{(p,q)}$} \label{S7}

We recall that $\mathcal{X} \in {\rm Mo}^{(p,q)}$ if and only if $G_r(\varphi)$ has no real roots in $[0, 2 \pi)$. Therefore, in particular, $\Theta^{-1}(0) = \emptyset$ when $\mathcal{X} \in {\rm Mo}^{(p,q)}$.
Moreover, the origin is a center of $\mathcal{X}_r$ if and only if, additionally, $\int_0^{2 \pi} \mathcal{F}_1(\varphi) \, d \varphi = 0$, see \cite{GGL} for example. It can be checked  (using the Bautin method or just by the reciprocal of Theorem 5 in \cite{AFG}) that if the origin is a center of $\mathcal{X} \in {\rm Mo}^{(p,q)}$ then it is also a center of $\mathcal{X}_r$. In other words, $\int_0^{2 \pi} \mathcal{F}_1(\varphi) \, d \varphi = 0$ is a necessary center condition at the origin of $\mathcal{X} \in {\rm Mo}^{(p,q)}$.
\newline

For vector fields lying in ${\rm Mo}^{(p,q)}$, the solution $\rho(\varphi; \rho_0)$ of the Cauchy problem (\ref{eq3*}) with initial condition $\rho(0; \rho_0) = \rho_0$ admits the convergent Taylor development $\rho(\varphi; \rho_0) = \sum_{i \geq 1} a_i(\varphi) \rho_0^i$ with $a_1(0)=1$ and $a_i(0) = 0$ for all $i \geq 2$.

\begin{lemma}\label{lemma1-Mopq}
If $\mathcal{X} \in {\rm Mo}^{(p,q)}$ then the following holds.
\begin{itemize}
  \item[(i)] The function $a_1(\varphi)$ is expressed as
\begin{equation}\label{a1-eq}
a_1(\varphi) = \exp\left(\int_0^\varphi \frac{F_r(\theta)}{G_r(\theta)} d \theta \right),
\end{equation}
where $F_r$ and $G_r$ are defined in \eqref{def-GjFjD}.

  \item[(ii)] Given the $(p,q)$-quasihomogeneous expansion $F(x,y) = F_{s}(x,y) + \cdots$ of an analytic invariant curve $F(x,y) = 0$, one has $F_s(\cos\varphi, \sin\varphi) \neq 0$ for all $\varphi \in \mathbb{S}^1$.
\end{itemize}
\end{lemma}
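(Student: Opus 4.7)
The plan is to prove the two parts separately, since each reduces to material already assembled in the paper. For part (i), I would substitute the convergent ansatz $\rho(\varphi;\rho_0)=\sum_{i\ge 1}a_i(\varphi)\rho_0^i$ into the Cauchy problem \eqref{eq3*} with $\mathcal{F}(\varphi,\rho)=\sum_{j\ge 1}\mathcal{F}_j(\varphi)\rho^j$ and collect the coefficient of $\rho_0^{1}$. On the left side this coefficient is $a_1'(\varphi)$, and on the right side only the $(j,i)=(1,1)$ contribution survives, producing the scalar linear ODE
\[
a_1'(\varphi)=\mathcal{F}_1(\varphi)\,a_1(\varphi)=\frac{F_r(\varphi)}{G_r(\varphi)}\,a_1(\varphi).
\]
Combined with $a_1(0)=1$, a single quadrature gives \eqref{a1-eq}. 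The only thing to verify is that the integrand is globally well defined on $\mathbb{S}^1$, and this is immediate: by Definition \ref{Mo-pq} and Remark \ref{remark-Mopq}, $\mathcal{X}\in{\rm Mo}^{(p,q)}$ is equivalent to $\Omega_{pq}=\emptyset$, i.e.\ $G_r$ has no real zeros on $\mathbb{S}^1$, so $F_r/G_r$ is smooth and $2\pi$-periodic.

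For part (ii), the approach is to recycle the geometric observation already used inside the proof of Theorem \ref{Th-centers-pq-period}. From \eqref{F-polar-expression} we have
\[
\hat F(\varphi,\rho)=\rho^{s}\bigl[F_s(\cos\varphi,\sin\varphi)+o(\rho)\bigr],
\]
with $s\ge 1$ because $F(0,0)=0$ and the origin is an isolated real zero of $F$ by monodromy. As argued in that earlier proof, $\hat F(\varphi,\rho)/\rho^s=0$ defines a curve invariant under the flow of \eqref{eq3*}, and such a curve can meet the invariant polycycle $\{\rho=0\}$ only at its singularities, whose angular coordinates are precisely the elements of $\Omega_{pq}$; otherwise monodromy would be violated. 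Consequently every real zero of $\varphi\mapsto F_s(\cos\varphi,\sin\varphi)$ must lie in $\Omega_{pq}$. Since $\mathcal{X}\in{\rm Mo}^{(p,q)}$ forces $\Omega_{pq}=\emptyset$, no such zero exists and the trigonometric expression $F_s(\cos\varphi,\sin\varphi)$ is nowhere zero on $\mathbb{S}^1$.

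Neither part presents a genuine obstacle; the lemma is essentially a convenient repackaging of facts already established, with (i) amounting to a routine power-series substitution and (ii) following directly from the emptiness of $\Omega_{pq}$ in the class ${\rm Mo}^{(p,q)}$. The only subtle bookkeeping to watch is the inequality $s\ge 1$ in (ii): if one forgot the isolated-real-zero hypothesis guaranteed by Theorem \ref{teo-F-exists} and monodromy, one could in principle have $s=0$, rendering the claim void of content; so I would state that small step explicitly rather than leaving it implicit.
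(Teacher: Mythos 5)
Your proposal is correct and follows essentially the same route as the paper: part (i) is obtained by inserting the power-series ansatz into \eqref{eq3*} and solving the resulting linear Cauchy problem $G_r a_1' = F_r a_1$, $a_1(0)=1$ (which is well posed since $\Omega_{pq}=\emptyset$), and part (ii) is deduced exactly as in the paper, from the claim inside the proof of Theorem \ref{Th-centers-pq-period} that any real zero of $F_s(\cos\varphi,\sin\varphi)$ must lie in $\Omega_{pq}$, which is empty for $\mathcal{X}\in{\rm Mo}^{(p,q)}$. Your additional remark about verifying $s\geq 1$ is a sensible bit of explicit bookkeeping that the paper leaves implicit.
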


\begin{proof}
Statement (i) is straightforward after imposing that $\rho(\varphi; \rho_0) = a_1(\varphi) \rho_0 + O(\rho_0^2)$ be a solution of (\ref{eq3*}). We reach that $a_1(\varphi)$ must satisfies the linear Cauchy problem
\begin{equation}\label{edo-a1}
G_r(\varphi) a'_1(\varphi) = a_1(\varphi) \, F_r(\varphi), \ \ a_1(0)=1.
\end{equation}

Statement (ii) is a straight consequence of the proof of Theorem \ref{Th-centers-pq-period} where we show that the eventual zeros of the function $F_s(\cos\varphi, \sin\varphi)$ lie in $\Omega_{pq}$.
\end{proof}

\subsection{Proof of Theorem \ref{Th-centers-Mopq}} \label{subsec-mopq}

\begin{proof}
The origin is a monodromic singular point of $\mathcal{X}$ and $\Pi$ is analytic at $\rho_0=0$ just because $\mathcal{X} \in {\rm Mo}^{(p,q)}$ and therefore $\Omega_{pq} = \emptyset$.

The fact that $\bar{r} \geq r$ easily follows by analyzing the lowest $(p,q)$-quasihomo\-geneous degrees in the equation $\mathcal{X}(F) = K F$ that gives $\mathcal{X}_r(F_s) = K_{r} F_s$ where $K_r$ can be eventually identically zero.
\newline

We start with the expression
\begin{equation}\label{hat-F}
\hat{F}(\varphi, \rho(\varphi; \rho_0))  = \rho_0^s \, [a_1^s(\varphi) F_{s}(\cos\varphi, \sin\varphi) + o(\rho_0)].
\end{equation}
From Lemma \ref{lemma1-Mopq} we know that $a_1(\varphi) \neq 0$ for all $\varphi \in [0, 2 \pi]$ and $F_s(\cos\varphi, \sin\varphi) \neq 0$ in all $\mathbb{S}^1$. Therefore, from the expression \eqref{hat-F} we check that $\hat{F}(\varphi, \rho(\varphi; \rho_0)) \neq 0$ for any $\rho_0 > 0$ sufficiently small and $\varphi \in [0, 2 \pi]$. This implies that $\int_{\hat\gamma_{\rho_0}} \hat K$ is not a singular integral for any $\rho_0 > 0$ sufficiently small. Indeed, following the lines of \eqref{Int-cof*} we have that
\begin{equation}\label{Int-cof}
\int_{\hat\gamma_{\rho_0}} \hat K = \int_{0}^{2 \pi} \frac{\frac{d}{d \varphi} \hat{F}(\varphi, \rho(\varphi; \rho_0))}{\hat{F}(\varphi, \rho(\varphi; \rho_0))} \, d \varphi = \log \left| \frac{\hat F(0, \Pi(\rho_0))}{\hat F(0, \rho_0)} \right|
\end{equation}
vanishes identically if and only if the origin is a center. More specifically, using that $\Pi(\rho_0) = \sum_{i \geq 1} \eta_i \rho_0^i$ and $\hat F(\varphi, \rho) = \sum_{i \geq s} F_i(\cos\varphi, \sin\varphi) \rho^i$ for certain $s \in \mathbb{N} \backslash \{0\}$ because $F(0,0)=0$. Taking $\varphi = 0$ we know that $F_s(1, 0) \neq 0$ and the Taylor expansion at $\rho_0=0$ of the right-hand part in \eqref{Int-cof} gives
\begin{equation}\label{log-expansion}
\int_{\hat\gamma_{\rho_0}} \hat K  = \log(\eta_1^s) + \sum_{i \geq 1} b_i \rho_0^i.
\end{equation}
Here the computations show that, if $\eta_1 = 1$ and $\eta_2 = \cdots = \eta_j = 0$ for some $j \geq 2$ then $b_1 = \cdots = b_{j-1} = 0$ and $b_j = s \eta_{j+1}$. In other words
\begin{equation}\label{relation-b-eta}
b_j = s \eta_{j+1} \mod \langle \eta_1-1, \eta_2, \ldots, \eta_j \rangle.
\end{equation}

On the other hand, the transformed cofactor $\hat{K}$ is
\begin{eqnarray*}
\hat{K}(\varphi, \rho) &=& \frac{D(\varphi) K(\rho^p \cos\varphi, \rho^q \sin\varphi)}{\rho^r \Theta(\varphi, \rho)} = \frac{D(\varphi) \sum_{i \geq \bar{r}} K_i(\cos\varphi, \sin\varphi) \rho^i}{\sum_{j \geq r} \rho^{j}  G_j(\varphi)} \\
 &=& \frac{D(\varphi) K_{\bar{r}}(\cos\varphi, \sin\varphi)}{G_r(\varphi)} \rho^{\bar{r}-r} +  O(\rho^{\bar{r}-r+1})
\end{eqnarray*}
where $G_r(\varphi) \neq 0$ for all $\varphi$ and therefore $\hat{K}(\varphi, \rho)$ is analytic at $\rho=0$.

Using that
\begin{eqnarray*}
\rho^{\bar r-r}(\varphi; \rho_0) &=& (a_1(\varphi) \rho_0 + a_2(\varphi) \rho_0^2 + O(\rho_0^3))^{\bar r-r} \\
 &=& a_1^{\bar r-r}(\varphi) \rho_0^{\bar r-r} + ({\bar r}-r) a_1^{\bar r-r-1}(\varphi) a_2(\varphi) \rho_0^{\bar r-r+1} + O(\rho_0)^{\bar r-r +2},
\end{eqnarray*}
we obtain
\begin{equation}\label{expansion-int-K}
\int_{\hat\gamma_{\rho_0}} \hat K := \int_{0}^{2 \pi} \hat K(\varphi, \rho(\varphi; \rho_0)) \, d \varphi = \sum_{i \geq \bar{r}-r} \beta_i \, \rho_0^i,
\end{equation}
where in the last equality we have performed integration term-by-term of the convergent power series at $\rho_0=0$ of the analytic function $\hat K(\varphi, \rho(\varphi; \rho_0))$, which is justified by \eqref{log-expansion}.

Comparing the coefficients in both series \eqref{log-expansion} and \eqref{expansion-int-K} and taking into account the property \eqref{relation-b-eta} the statements (i) and (ii) of the theorem follows.
\end{proof}

\begin{corollary}\label{Cor-centers-Mopq}
Under the hypothesis of Theorem \ref{Th-centers-Mopq}, the first coefficients $\beta_i$ in the expansion \eqref{Teo-int-K-alphas} are given by
\begin{eqnarray*}
\beta_{\bar{r}-r} &=& \int_{0}^{2 \pi} \frac{D(\varphi) a_1^{\bar{r}-r}(\varphi) K_{\bar{r}}(\cos\varphi, \sin\varphi)}{G_r(\varphi)}  \, d \varphi,  \\
\beta_{\bar{r}-r+1} &=& \int_{0}^{2 \pi} D(\varphi) \left( \frac{(\bar{r}-1) a_1^{\bar{r}-r-1}(\varphi) a_2(\varphi) K_{\bar{r}}(\cos\varphi, \sin\varphi)}{G_r(\varphi)} + \right. \\
 & & \left. \frac{a_1^{\bar{r}-r+1}(\varphi) (K_{\bar{r}+1}(\cos\varphi, \sin\varphi) G_r(\varphi) - K_{\bar{r}}(\cos\varphi, \sin\varphi) G_{r+1}(\varphi))}{G_r^2(\varphi)}  \right) \, d \varphi.
\end{eqnarray*}
being $a_1 : [0, 2 \pi] \to \mathbb{R}$ is defined in \eqref{a1-eq}.
\end{corollary}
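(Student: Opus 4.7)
The plan is to read off the first two coefficients of the expansion \eqref{Teo-int-K-alphas} directly by a double Taylor expansion: first expanding $\hat K(\varphi,\rho)$ in powers of $\rho$ around $\rho=0$, then substituting the known power series of $\rho(\varphi;\rho_0)$ in $\rho_0$. The term-by-term integration over $[0,2\pi]$ is already justified inside the proof of Theorem \ref{Th-centers-Mopq} (the integrand is analytic at $\rho_0=0$ since $\mathcal X\in{\rm Mo}^{(p,q)}$ forces $G_r(\varphi)\neq 0$ for all $\varphi\in\mathbb S^1$), so all that remains is the combinatorics of the two leading coefficients.

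First I would expand the cofactor in $\rho$. Starting from
\[
\hat K(\varphi,\rho)=\frac{D(\varphi)\sum_{i\geq\bar r}K_i(\cos\varphi,\sin\varphi)\rho^i}{\sum_{j\geq r}G_j(\varphi)\rho^j},
\]
factor $\rho^r G_r(\varphi)$ out of the denominator and expand $\bigl(1+\tfrac{G_{r+1}}{G_r}\rho+\cdots\bigr)^{-1}=1-\tfrac{G_{r+1}}{G_r}\rho+\cdots$ as a geometric series. Multiplying by the numerator yields
\[
\hat K(\varphi,\rho)=\frac{D(\varphi)K_{\bar r}(\cos\varphi,\sin\varphi)}{G_r(\varphi)}\rho^{\bar r-r}+\frac{D(\varphi)\bigl[K_{\bar r+1}(\cos\varphi,\sin\varphi)G_r(\varphi)-K_{\bar r}(\cos\varphi,\sin\varphi)G_{r+1}(\varphi)\bigr]}{G_r^2(\varphi)}\rho^{\bar r-r+1}+O(\rho^{\bar r-r+2}).
\]

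Next I would insert the convergent series $\rho(\varphi;\rho_0)=a_1(\varphi)\rho_0+a_2(\varphi)\rho_0^2+O(\rho_0^3)$, using the binomial identities
\[
\rho^{\bar r-r}(\varphi;\rho_0)=a_1^{\bar r-r}(\varphi)\rho_0^{\bar r-r}+(\bar r-r)a_1^{\bar r-r-1}(\varphi)a_2(\varphi)\rho_0^{\bar r-r+1}+O(\rho_0^{\bar r-r+2}),
\]
\[
\rho^{\bar r-r+1}(\varphi;\rho_0)=a_1^{\bar r-r+1}(\varphi)\rho_0^{\bar r-r+1}+O(\rho_0^{\bar r-r+2}),
\]
as in the proof of Theorem \ref{Th-centers-Mopq}. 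Collecting the coefficients of $\rho_0^{\bar r-r}$ and $\rho_0^{\bar r-r+1}$ in the resulting expansion of $\hat K(\varphi,\rho(\varphi;\rho_0))$ and integrating over $\varphi\in[0,2\pi]$ produces the two claimed integral formulas for $\beta_{\bar r-r}$ and $\beta_{\bar r-r+1}$, with $a_1$ explicitly given by \eqref{a1-eq} from Lemma \ref{lemma1-Mopq}.

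There is no conceptual obstacle here: the delicate analytic points (existence of the integral, analyticity of the integrand in $\rho_0$, and the convergence of the expansion of $\rho(\varphi;\rho_0)$) have all been settled in Theorem \ref{Th-centers-Mopq} and Lemma \ref{lemma1-Mopq}. The only thing to be careful with is the bookkeeping of the two sources that contribute to the coefficient of $\rho_0^{\bar r-r+1}$, namely the correction $a_2$ coming from $\rho^{\bar r-r}$ and the next-order term of the $\rho$-expansion of $\hat K$ evaluated at $\rho=a_1\rho_0$; both appear additively in $\beta_{\bar r-r+1}$ as stated.
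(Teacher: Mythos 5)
Your proposal is correct and follows essentially the same route as the paper: the paper's own proof consists precisely of the two expansions you describe (the $\rho$-expansion of $\hat K$ obtained by factoring $\rho^r G_r(\varphi)$ out of the denominator, followed by the substitution of $\rho(\varphi;\rho_0)=a_1(\varphi)\rho_0+a_2(\varphi)\rho_0^2+O(\rho_0^3)$ and term-by-term integration justified in Theorem \ref{Th-centers-Mopq}). One point you should make explicit rather than glossing over: your binomial identity correctly produces the factor $(\bar r-r)\,a_1^{\bar r-r-1}a_2$ in the coefficient of $\rho_0^{\bar r-r+1}$, which matches the expansion used inside the proof of Theorem \ref{Th-centers-Mopq}, whereas the corollary as printed carries the factor $(\bar r-1)$; these agree only when $r=1$, so your derivation in fact establishes the formula with $(\bar r-r)$ and thereby corrects what appears to be a typo in the statement --- you should say so instead of asserting that you recover ``the claimed formulas'' verbatim.
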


\begin{proof}
It is straightforward from the proof of Theorem \ref{Th-centers-pq-period} taking into account that {\small
\begin{eqnarray*}
\hat{K}(\varphi, \rho) &=&  D(\varphi) \left(\frac{K_{\bar{r}}(\cos\varphi, \sin\varphi)}{G_r(\varphi)} \rho^{\bar{r}-r} + \right. \\
 & & \left. \frac{K_{\bar{r}+1}(\cos\varphi, \sin\varphi) G_r(\varphi) - K_{\bar{r}}(\cos\varphi, \sin\varphi) G_{r+1}(\varphi)}{G_r^2(\varphi)} \rho^{\bar{r}-r+1} +  O(\rho^{\bar{r}-r+2})\right),
\end{eqnarray*}  }
and that the first explicit coefficients $\beta_{\bar{r}-r}$ and $\beta_{\bar{r}-r+1}$ in the series \eqref{expansion-int-K} are just the expressions given in the corollary.
\end{proof}

\begin{corollary}\label{Cor-centers-pq-period}
Under the assumptions of Theorem \ref{Th-centers-Mopq} applied to a family $\mathcal{X}_\lambda$ of vector fields with parameters $\lambda \in \Lambda$, if $K_{\bar{r}}(\cos\varphi, \sin\varphi)$ is sign-defined and vanishes identically only when the parameters lie in $\hat\Lambda \subset \Lambda$  then the solution of the equation $\eta_{1}(\lambda) = 1$ if $\bar{r} - r = 0$ or $\eta_{\bar{r}-r+1}(\lambda) = 0$ if $\bar{r} - r \geq 1$ is $\lambda \in  \hat\Lambda$.
\end{corollary}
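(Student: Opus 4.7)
The plan is to reduce the corollary to a sign analysis of the first nontrivial coefficient $\beta_{\bar r - r}$ in the expansion \eqref{Teo-int-K-alphas}. The explicit formula provided by Corollary \ref{Cor-centers-Mopq} gives
\[
\beta_{\bar r - r} = \int_0^{2\pi} \frac{D(\varphi)\, a_1^{\bar r - r}(\varphi)}{G_r(\varphi)}\, K_{\bar r}(\cos\varphi,\sin\varphi)\, d\varphi,
\]
and the strategy is to show that, under the hypotheses, the integrand has constant sign, so that the integral vanishes if and only if $K_{\bar r}(\cos\varphi,\sin\varphi) \equiv 0$ on $\mathbb{S}^1$.

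First I would verify that the weight $W(\varphi) := D(\varphi)\, a_1^{\bar r - r}(\varphi)/G_r(\varphi)$ has constant sign on $[0, 2\pi]$. Indeed $D(\varphi) = p \cos^2\varphi + q \sin^2\varphi > 0$ by definition \eqref{def-GjFjD}; the function $a_1(\varphi)$ is strictly positive on the whole interval thanks to the exponential representation \eqref{a1-eq} supplied by Lemma \ref{lemma1-Mopq}(i); and $G_r(\varphi)$ does not vanish on $\mathbb{S}^1$ since $\mathcal{X} \in {\rm Mo}^{(p,q)}$ forces $\Omega_{pq} = \emptyset$, so by continuity $G_r$ keeps a constant sign. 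Hence $W(\varphi)$ is of constant sign on the whole circle.

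Combining this with the hypothesis that $K_{\bar r}(\cos\varphi,\sin\varphi)$ is sign-defined as a function of $\varphi$, the product $W(\varphi)\, K_{\bar r}(\cos\varphi,\sin\varphi)$ is itself sign-defined, so $\beta_{\bar r - r} = 0$ if and only if $K_{\bar r}(\cos\varphi,\sin\varphi) \equiv 0$ on $\mathbb{S}^1$, which by assumption characterizes precisely the parameter subset $\hat\Lambda$. To translate this equivalence into a condition on the generalized Poincar\'e-Lyapunov quantities I would invoke Theorem \ref{Th-centers-Mopq}: in case (i), where $\bar r - r = 0$, the identity $\beta_0 = \log(\eta_1^s) = s \log \eta_1$ with $s \geq 1$ and $\eta_1 > 0$ yields $\beta_0 = 0$ if and only if $\eta_1(\lambda) = 1$; in case (ii), where $\bar r - r \geq 1$, the identity $\beta_{\bar r - r} = s\, \eta_{\bar r - r + 1}$ with $s \geq 1$ yields $\beta_{\bar r - r} = 0$ if and only if $\eta_{\bar r - r + 1}(\lambda) = 0$. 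This gives the desired characterization.

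Since the whole proof is a direct combination of Theorem \ref{Th-centers-Mopq} and Corollary \ref{Cor-centers-Mopq} with elementary sign considerations, there is no serious obstacle; the only delicate point is ensuring that every factor in the integrand other than $K_{\bar r}$ is of constant sign on $\mathbb{S}^1$, which exploits precisely the defining feature $\Omega_{pq} = \emptyset$ of the monodromic class ${\rm Mo}^{(p,q)}$ together with the strict positivity of $a_1$ furnished by \eqref{a1-eq}.
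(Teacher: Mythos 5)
Your argument is correct and follows essentially the same route as the paper: both rely on the explicit formula for $\beta_{\bar r - r}$ from Corollary \ref{Cor-centers-Mopq}, the non-vanishing of $D(\varphi)\,a_1(\varphi)\,G_r(\varphi)$ on $\mathbb{S}^1$ (so that the sign-definedness of $K_{\bar r}$ forces $\beta_{\bar r-r}=0$ exactly on $\hat\Lambda$), and statements (i) and (ii) of Theorem \ref{Th-centers-Mopq} to translate this into the conditions $\eta_1=1$ or $\eta_{\bar r-r+1}=0$. Your write-up is just a more detailed version of the paper's two-line proof, making explicit the positivity of $a_1$ via \eqref{a1-eq} and the role of $\Omega_{pq}=\emptyset$.
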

\begin{proof}
Taking into account that $D(\varphi) a_1(\varphi) G_r(\varphi) \neq 0$ for all $\varphi$, it is direct that the expression of $\beta_{\bar{r}-r}$ given in Corollary \ref{Cor-centers-Mopq} only vanishes for the parameters $\lambda \in  \hat\Lambda$. Then the corollary follows recalling statements (i) and (ii) of Theorem \ref{Th-centers-Mopq} according to whether $\bar{r} - r = 0$ or $\bar{r} - r \geq 1$, respectively.
\end{proof}

\begin{remark}
{\rm It is very important to stress that the called Bautin method can be applied to compute the Poincar\'e-Lyapunov quantities $\eta_i$ when $\mathcal{X} \in {\rm Mo}^{(p,q)}$. It consists in expanding the solution $\rho(\varphi; \rho_0)$ of the Cauchy problem (\ref{eq3*}) with initial condition $\rho(0; \rho_0) = \rho_0$ as $\rho(\varphi; \rho_0) = \sum_{i \geq 1} a_i(\varphi) \rho_0^i$ with $a_1(0)=1$ and $a_i(0) = 0$ for all $i \geq 2$ and the function $\mathcal{F}$ appearing in (\ref{eq3*}) of the form $\mathcal{F}(\varphi, \rho)= \sum_{i \geq 1} \mathcal{F}_i(\varphi) \rho^i$ with $\mathcal{F}_1 = F_r / G_r$. Some computations reveal that one can find the functions $a_i(\varphi)$ as the solutions of linear Cauchy problems. As a taste, the first of these equations are
\begin{eqnarray*}
a'_1(\varphi) &=& a_1(\varphi) \mathcal{F}_1(\varphi), \\
a'_2(\varphi) &=& a_2(\varphi) \mathcal{F}_1(\varphi) + a_1^2(\varphi) \mathcal{F}_2(\varphi).
\end{eqnarray*}
Finally the Poincar\'e map is $\Pi(\rho_0) = \sum_{i \geq 1} \eta_i \rho_0^i$ with $\eta_i = a_i(2 \pi)$. Even though Bautin's approach is theoretically well established, the closed form of the functions $a_i(\varphi)$ can be very hard to obtain in applications, even impossible due to the involved quadrature. This happens in several of the forthcoming analyzed examples. Is in this context where Theorem \ref{Th-centers-Mopq} and Corollary \ref{Cor-centers-Mopq} applies successfully showing that, without the explicit expressions of $a_i(\varphi)$ we can deduce that $a_i(2 \pi) = 0$ in some cases. Moreover, they also show how to construct $\eta_{\bar{r}-r+j}$ with the sole knowledge of $a_i(\varphi)$ for $i=1, \ldots, j+1$, see Corollary \ref{Cor-centers-Mopq} for example.  }
\end{remark}

\section{Examples} \label{S8}

In this section we present some non-trivial examples illustrating the application of the results obtained in this work.

\subsection{Quasihomogeneous vector field}

We analyze the following example:
\begin{equation}\label{ejemplo-suficientCC}
\dot{x} = A x^3 + B y, \ \ \ \dot{y} = C x^5 + D x^2 y.
\end{equation}
This is a $(1,3)$-quasihomogeneous vector field of degree $r=2$ and consequently $v(x,y) = C x^6 - 3 A x^3 y + D x^3
y - 3 B y^2$ is an inverse integrating factor of \eqref{ejemplo-suficientCC}. The center-focus problem has been solved
in \cite{AFG}, being the outcome that the monodromic condition is $\Delta := (D-3 A)^2 + 12 B C < 0$ while the necessary and sufficient center
condition is additionally the restriction $3 A + D = 0$.
\newline

An analysis of the leading term of the invariant branches at the origin gives that $\alpha_0$ must be a root of the quadratic polynomial $\mathcal{Q}(\eta) = -C + (3 A - D) \eta + 3 B \eta^2$ with discriminant $\Delta$ and the leading exponent is $3$. Therefore $y^*(x)$ and $\bar{y}^*(x)$ are invariant branches with $y^*(x) = \alpha_0  x^3 + \sum_{i \geq 1} \alpha_i x^{3+i/n}$ for some index $n$ and where $\alpha_0 = (D-3 A + \sqrt{\Delta})/ (6 B)$ and certain coefficients $\alpha_i \in \mathbb{C}$. Recall that $B \neq 0$ under monodromy. Here we easily check (therefore as a by product we prove that $\alpha_i = 0$ for all $i>1$) that using only the leading terms of the branches we get
\begin{eqnarray*}
F(x,y) &=& (y - y^*(x))(y-\bar{y}^*(x)) =  (y -\alpha_0 x^3)(y- \bar\alpha_0 x^3 ) \\
 &=& F_6(x,y) = y^2 + \left( \frac{A}{B} - \frac{D}{3 B}\right) x^3 y  -  \frac{C}{3 B} x^6,
\end{eqnarray*}
which is a $(1,3)$-quasihomogeneous polynomial inverse integrating factor of degree $s=6$ and the cofactor is $K(x,y) = K_2(x,y) = (3 A + D) x^2$,
therefore $\bar r = 2$.

Using $(1, 3)$-weighted polar coordinates we obtain a differential system \eqref{system-w-polares} but the closed form expression of $a_1(\varphi)$ is very difficult to obtain since we are not able to obtain a primitive of $F_2(\varphi)/G_2(\varphi)$. Therefore we do not know the value of $\eta_1 = a_1(2 \pi)$. Anyway, under the monodromic condition $\Delta < 0$ system \eqref{ejemplo-suficientCC} belongs to the ${\rm Mo}^{(1,3)}$ class. Additionally, $F(0,0)=0$ and $K_{2}$ is sign-defined so we may apply Corollary \ref{Cor-centers-pq-period} to conclude that $\eta_{1} = 1$ if and only if $3 A + D = 0$. Observe that we can reobtain that the origin of system \eqref{system-w-polares} is a focus when $3 A + D \neq 0$ just by using Corollary \ref{Corol-semidef}.

\subsection{Nilpotent cubic vector field}

In 1953 Andreev (\cite{Andreev1953}) showed that the origin of family
\begin{equation}\label{nil-cubic}
\dot x =  y + A x^2 y + B x y^2 + C y^3,  \ \ \ \dot y = -x^3 + P x^2 y + \kappa x y^2 + L y^3,
\end{equation}
is  monodromic and that it is a center if and only if $P = B + 3 L = (A + \kappa) L = 0$.

Since the origin of \eqref{nil-cubic} is a nilpotent singularity, it is known that $\mathbf{N}(\mathcal{X})$ has only one edge with associated weight $(p,q) = (1, n_A)$ being $n_A$ the so-called Andreev number. In \eqref{nil-cubic} one has $n_A=2$ and the vector field has the $(1,2)$-quasihomogeneous decomposition $\mathcal{X} = \mathcal{X}_1 + \cdots$  with $\mathcal{X}_1 = y \partial_x - x^3 \partial_y$, hence the leading degree is $r=1$.

An analysis of the leading term of the invariant branches at the origin gives that $\alpha_0$ must be a root of $\mathcal{Q}(\eta) = 1 + 2 \eta^2$ and $k = 2$. Then $y^*(x)$ and $\bar{y}^*(x)$ are invariant branches with $y^*(x) = (i / \sqrt{2}) x^2 + \sum_{j \geq 1} \alpha_j x^{2+ \frac{j}{n}}$ for certain coefficients $\alpha_j \in \mathbb{C}$ and branch index $n$. In order to determine $n$ we follow the lines of Remark \ref{Demina-remark}. There is just one dominant balance given by $E_0[y(x), x] = y y' + x^3$. Now we calculate the formal G\^{a}teaux derivative of the dominant balance at $y(x) = \alpha_0 x^k = (i / \sqrt{2}) x^2$, that is,
\begin{eqnarray*}
\frac{\delta E_0}{\delta y}[\alpha_0 x^k] &=& \lim_{s \to 0} \frac{1}{2} x^{3 + j} (i \sqrt{2} (4 + j) + 2 (2 + j) s x^j) = V(j) x^{3+j},
\end{eqnarray*}
with $V(j) = i (4 + j) / \sqrt{2}$. Therefore the Fuchs index of the dominant balance is $j= -4 \not\in \mathbb{Q}^+ \backslash \mathbb{N}$ and then the branch index is $n = 1$. In consequence $y^*(x)$ is expressed as the power series $y^*(x) = (i / \sqrt{2}) x^2 + \sum_{j \geq 1} \alpha_j x^{2+j}$. Since we know by Theorem \ref{teo-F-exists} that there is one real analytic invariant curve $F^{\mathbb{R}}(x,y)=0$ of \eqref{nil-cubic}, it is clear that each invariant branch of $F^{\mathbb{R}}$ has a convergent Puiseux series and therefore we can always choose that $y^*(x) = (i / \sqrt{2}) x^2 + \sum_{j \geq 1} \alpha_j x^{2+j}$ and its complex conjugated $\bar{y}^*(x)$ are convergent power series which implies that $F(x,y) = (y-y^*(x)) (y- \bar{y}^*(x))$ is a real analytic function at the origin.

Straightforward computations show that $F$ and $K$ have the following $(1,2)$-quasiho\-mogeneous expansions
\begin{eqnarray*}
F(x,y) &=& F_4(x,y) + \cdots = \frac{x^4}{2} + y^2 + \cdots, \\
K(x,y) &=& K_2(x,y) + \cdots = \frac{4}{5} P x^2  + \cdots,
\end{eqnarray*}
and therefore $s=4$ and $\bar{r} = 2$. Using $(1,2)$-weighted polar coordinates we obtain a polar system \eqref{system-w-polares} with expression
$d \rho/d \tau = F_1(\varphi) \rho + O(\rho^2)$, $d \varphi/d \tau = G_1(\varphi) +  O(\rho)$, where $F_1(\varphi) = (1 - \cos^2\varphi) \cos\varphi \sin\varphi$ and $G_1(\varphi) = -\cos^4\varphi - 2 \sin^2\varphi$. Clearly $G_1$ does not vanish on $\mathbb{S}^1$ and therefore system \eqref{nil-cubic} lies in ${\rm Mo}^{(1,2)}$, as expected. We know that $\beta_0 = 0$ because $\bar{r} - r = 1$, see \eqref{Teo-int-K-alphas}. Hence $\eta_1 = 1$ by Theorem \ref{Th-centers-Mopq}(i).

From statement (ii) of Theorem \ref{Th-centers-Mopq} it follows that $\beta_1 = 4 \eta_{2}$. Taking into account the expression of $\beta_1$ given in Corollary \ref{Cor-centers-Mopq}, using Corollary \ref{Cor-centers-pq-period} with the sign-defined function $K_{2}(\cos\varphi, \sin\varphi) = \frac{4}{5} P \cos^2\varphi$  we conclude that $\eta_{2} = 0$ if and only if $P = 0$. Further computations show that $a_2(\varphi) \equiv 0$ provided $P=0$, in which case we compute $\beta_2 = 0$ and consequently $\eta_3=0$.
\newline

We finish by noticing that by Lemma \ref{lemma1-Mopq} we know that the explicit expression of $a_1(\varphi)$ is
$$
a_1(\varphi) = \frac{2^{3/4}}{(11 - 4 \cos(2 \varphi) +  \cos(4 \varphi))^{1/4}}.
$$
Then $\eta_1 = a_1(2 \pi) = 1$ confirming our previous result. In order to do further computations we need the expression of $a_2(\varphi)$ but to obtain it in closed form with arbitrary $P$ is very difficult.

\begin{remark}
{\rm Following the theory developed in \cite{De,DGV}, we may compute the Newton diagram associated to the differential equation of the orbits of \eqref{nil-cubic}. It is given by the convex hull of the set of points $\{ (-1,2), (1,2), (0,3), (-1,4),$ $(3,0),$ $(2,1)\}$. This Newton diagram has one unique descending segment with endpoints $(-1,2)$ and $(3,0)$ implying that the associated weights are $(p,q) = (1, 2)$.}
\end{remark}

\subsection{A vector field in the class ${\rm Mo}^{(p,q)}$}

In \cite{AGG2} it is proved that system
\begin{equation}\label{Algaba-JDE-example}
\dot{x} = y^3 + 2 a x^3 y+2 x (\alpha x^4 + \beta x y^2), \ \ \dot{y} = -x^5- 3 a x^2 y^2 + 3 y (\alpha x^4 +
\beta x y^2)
\end{equation}
with $\alpha \beta \neq 0$ is not orbitally reversible nor formally integrable but, under the monodromy condition
\begin{equation}\label{mon-AlgabaJDE}
0 \neq |a| < 1/\sqrt{6},
\end{equation}
there are real values of $(\alpha, \beta, a)$ such that the origin is a center. We observe that \eqref{Algaba-JDE-example} with weights $(p,q) = (2,3) \in W(\mathbf{N}(\mathcal{X}))$ has the $(2,3)$-quasihomogeneous expansion $\mathcal{X} = \mathcal{X}_{r} + \cdots$ with $r=7$ and $\mathcal{X}_{7} = (y^3 + 2 a x^3 y) \partial_x + (-x^5- 3 a x^2 y^2) \partial_y$. Moreover it is easy to check that $\mathcal{X} \in {\rm Mo}^{(2,3)}$ because $\Omega_{23} = \emptyset$.
\newline

An analysis of the leading term of the invariant branches at the origin gives that $\alpha_0$ must be a root of the quartic polynomial $\mathcal{Q}(\eta) = 1 + 6 a \eta^2 + 3 \eta^4/2$ and the leading exponent is $q/p = 3/2$. We consider the set of invariant branches $\{ y_1^*(x), \bar{y}_1^*(x), y_2^*(x), \bar{y}_2^*(x) \}$.  We have $y_1^*(x) = \alpha_0  x^{3/2} + \sum_{i \geq 1} \alpha_i x^{\frac{3}{2}+\frac{i}{n}}$ where $n$ is an even integer and
\[
\alpha_0 = \sqrt{-2 a + \sqrt{4 a^2-2/3}}
\]
and certain coefficients $\alpha_i \in \mathbb{C}$.

To get the index $n$ of the branch $y_1^*$ we use Remark \ref{Demina-remark}. There is one dominant balance $E_0[y(x), x] = (y^3 + 2 a x^3 y) y' + (x^5 + 3 a x^2 y^2)$. The formal G\^{a}teaux derivative $E_0$ at $y(x) = \alpha_0  x^{3/2}$ is
\begin{eqnarray*}
\frac{\delta E_0}{\delta y}[\alpha_0  x^{3/2}] &=& V(j) x^{5+j},
\end{eqnarray*}
with $V(j) = \xi(a) (j+6)$ where $\xi(a) \neq 0$ under the restriction \eqref{mon-AlgabaJDE}. In short $j=-6 \not\in \mathbb{Q}^+ \backslash \mathbb{N}$ is the unique Fuchs index and then the branch index is $n = 2$. So we obtain the expansion $y_1^*(x) = \alpha_0  x^{3/2} + \sum_{i \geq 1} \alpha_i x^{\frac{3+i}{2}}$, and further computations reveal that $\alpha_i = 0$ for the first indexes $0 < i < 5$. Indeed we claim that $\alpha_i = 0$ for all $i >0$ and that this also happens for all he Puiseux series of the other branches.  The claim follows because we have that
\begin{eqnarray*}
F(x,y) &=& (y - y_1^*(x))(y-\bar{y}_1^*(x))(y - y_2^*(x))(y-\bar{y}_2^*(x)) \\
 &=& F_{12}(x,y) = \frac{2}{3} x^6 + 4 a x^3 y^2 + y^4, \\
K(x,y) &=& K_8(x,y) = 12 x (\alpha x^3 + \beta y^2),
\end{eqnarray*}
hence $s=12$ and $\bar{r} = 8$. Using $(2,3)$-weighted polar coordinates we obtain a system \eqref{system-w-polares} with expression $d \rho/d \tau = F_7(\varphi; a) \rho + O(\rho^2)$, $d \varphi/d \tau=  G_7(\varphi; a) + O(\rho)$, where
\begin{eqnarray*}
F_7(\varphi; a) &=& \sin\varphi \cos\varphi (1 + 6 a \cos\varphi - 8 \cos(2 \varphi) + 10 a \cos(3 \varphi) - \cos(4 \varphi)) / 8, \\
G_7(\varphi; a) &=& -(2 \cos^6\varphi + 12 a \cos^3\varphi \sin^2\varphi + 3 \sin^4\varphi).
\end{eqnarray*}
It is easy to see that $G_7(\varphi; a) \neq 0$ for any $\varphi \in \mathbb{S}^1$ and $a$ satisfying the monodromic condition \eqref{mon-AlgabaJDE} so the origin of system \eqref{Algaba-JDE-example} belongs to the monodromic class ${\rm Mo}^{(2,3)}$. Moreover $\beta_0 = 0$ because $\bar{r} - r = 1$ by \eqref{Teo-int-K-alphas} and then $\eta_1 = 1$ from statement (ii) of Theorem \ref{Th-centers-Mopq}. We stress that we cannot use Corollary \ref{Cor-centers-pq-period} because $K_{8}(\cos\varphi, \sin\varphi)$ is not sign-defined.

From Lemma \ref{lemma1-Mopq} we calculate
$$
a_1(\varphi) = \exp\left( \int_0^\varphi \frac{F_7(\theta; a)}{G_7(\theta; a)} d \theta \right)  =  \frac{2^{1/12}}{(-G_7(\varphi; a))^{1/12}},
$$
confirming again that $\eta_1 = a_1(2 \pi) = 1$. Now we use statement (ii) of Theorem \ref{Th-centers-Mopq} and we compute $\beta_1$ with the formula given in Corollary \ref{Cor-centers-Mopq}:
$$
12 \eta_2 = \beta_{1} = \int_{0}^{2 \pi} \frac{D(\varphi) a_1(\varphi) K_{8}(\cos\varphi, \sin\varphi)}{G_7(\varphi; a)}  \, d \varphi = \alpha f(a) + \beta g(a)
$$
where
$$
f(a) = \int_{0}^{2 \pi} A(\varphi; a) \cos^3\varphi  \, d \varphi, \ \ g(a) = \int_{0}^{2 \pi} A(\varphi; a) \sin^2\varphi \, d \varphi,
$$
with $A(\varphi; a) = 6 \times 2^{1/12} \cos\varphi (\cos(2\varphi) -5)/(-G_7(\varphi; a))^{13/12}$.

\begin{remark}
{\rm Using the fact that $F^{13/12}$ is an inverse integrating factor of \eqref{Algaba-JDE-example}, in the work \cite{GaGi1} it is shown that the points in the parameter space that correspond to a center lie in the surface $\mathcal{C} = \{
(a, \alpha, \beta) \in \mathbb{R}^3 : |a| < 1/\sqrt{6}, \ \alpha f(a) + \beta g(a) = 0\}$ where $f$ and $g$ are analytic functions and $g(0)=0$. In particular $(a, \alpha, \beta) = (0,0,\beta) \in \mathcal{C}$ corresponds with a time-reversible center and $(a, \alpha, \beta) = (a,0, 0) \in \mathcal{C}$ when $|a| < 1/\sqrt{6}$ with the Hamiltonian centers. See the definition of time-reversible center in \cite{GM}.
 }
\end{remark}

\subsection{A vector field not lying in the class ${\rm Mo}^{(p,q)}$}

In the work \cite{GaGi2} it is analyzed the family of vector fields
\begin{eqnarray}
\dot{x} &=& \lambda_1 (x^6 + 3 y^2) (-y + \mu x) + \lambda_2 (x^2+y^2)(y + A x^3), \nonumber \\
\dot{y} &=& \lambda_1 (x^6 + 3 y^2) (x + \mu y) + \lambda_2 (x^2+y^2)(-x^5 + 3 A x^2 y ), \label{ejemplo3-DCCD}
\end{eqnarray}
with parameter space $(\lambda_1, \lambda_2, \mu, A) \in \mathbb{R}^4$. The origin is a degenerate singularity and there are members of the family with both the origin monodromic and not monodromic. In \cite{GaGi2} it is proved that the origin of family \eqref{ejemplo3-DCCD} is monodromic if and only if the parameters lie in
$$
\Lambda = \{ (\lambda_1, \lambda_2, \mu, A) \in \mathbb{R}^4 : 3 \lambda_1 - \lambda_2 > 0, \ \ \lambda_1 - \lambda_2 > 0\}.
$$
Moreover there it is also proved, thanks to a given inverse integrating factor of \eqref{ejemplo3-DCCD}, that if we restrict the family to the subset of the parameter space $\Lambda \backslash \Lambda^*$ where $\Lambda^* = \{ \alpha_{11} \geq 0, \ \alpha_{13} \geq 0, \ A \lambda_2 + \sqrt{\alpha_{11}} \leq 0, \  -\mu \lambda_1 + \sqrt{\alpha_{13}} \geq 0 \}$ with the definitions $\alpha_{11} := -3 \lambda_1^2 + 4 \lambda_1 \lambda_2 + (-1 + A^2) \lambda_2^2$ and $\alpha_{13} := (-3 + \mu^2) \lambda_1^2 + 4 \lambda_1 \lambda_2 - \lambda_2^2$, then the following holds:
\begin{itemize}
  \item[(i)] The Poincar\'e map is linear: $\Pi(r_0) = \eta_1 r_0$;
  \item[(ii)] The origin is a center if and only if
  \begin{equation}\label{center-cond-V-ej}
  3 \lambda_1 \mu +  \sqrt{3} A \lambda_2 = 0.
  \end{equation}
\end{itemize}
We note that the set $\Lambda \backslash \Lambda^*$ corresponds to those parameters such that $\Theta^{-1}(0) \backslash \{\rho = 0\} = \emptyset$ and therefore there are no curves on $C$ of zero angular speed.
\newline

Our intention is to continue the analysis of family \eqref{ejemplo3-DCCD} without the restriction $\Lambda \backslash \Lambda^*$ and without the help of its inverse integrating factor. In this way we can prove the following result.

\begin{proposition}\label{prop-ejemplo3-DCCD-*}
We consider the monodromic family \eqref{ejemplo3-DCCD} with parameters lying in the set
$$
\bar{\Lambda} = \{ (\lambda_1, \lambda_2, \mu, A) \in \mathbb{R}^4 : \lambda_1 >0, \lambda_2 < 0, \lambda_2/\lambda_1 \in \mathbb{Q}^- \} \subset \Lambda.
$$
Then $\bar{\Lambda} \not\subset \Lambda \backslash \Lambda^*$ and the following holds:
\begin{itemize}
  \item[(i)] If $A \mu < 0$ then the origin of family \eqref{ejemplo3-DCCD} is a focus;

  \item[(ii)] If $\mu = 0$ then the origin of family \eqref{ejemplo3-DCCD} is a focus or a center according to whether $A \neq 0$ or $A=0$, respectively.
\end{itemize}
\end{proposition}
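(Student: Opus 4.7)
The plan is to construct a single real analytic invariant curve $F=0$ of $\mathcal{X}$ passing through the origin whose associated cofactor $K$ is sign-defined on a neighborhood of $(0,0)$, and then to conclude directly by Corollary \ref{Corol-semidef}. The starting observation is the structural decomposition
\[
\mathcal{X} \;=\; \lambda_1(x^6+3y^2)\,\mathcal{X}_1 + \lambda_2(x^2+y^2)\,\mathcal{X}_2,
\]
with $\mathcal{X}_1=(-y+\mu x)\partial_x+(x+\mu y)\partial_y$ and $\mathcal{X}_2=(y+Ax^3)\partial_x+(-x^5+3Ax^2 y)\partial_y$. A direct verification yields $\mathcal{X}_1(x^2+y^2)=2\mu(x^2+y^2)$ and $\mathcal{X}_2(x^6+3y^2)=6Ax^2(x^6+3y^2)$; propagating these two identities through the decomposition shows that both $F_1(x,y):=x^2+y^2$ and $F_2(x,y):=x^6+3y^2$ are real analytic invariant curves of $\mathcal{X}$ with isolated real zero at the origin, and yields explicit formulas for their cofactors $K_{F_1}$ and $K_{F_2}$.

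The second step uses Remark \ref{multiples-F}: since $\lambda_2/\lambda_1\in\mathbb{Q}^-$ we can write $\lambda_2/\lambda_1=-a/b$ with $a,b\in\mathbb{N}$ and set $m_1:=3b$, $m_2:=a$, so that $m_1\lambda_2+3m_2\lambda_1=0$. Then $F:=F_1^{m_1}F_2^{m_2}=0$ is itself a real analytic invariant curve of $\mathcal{X}$ through the origin with cofactor $K=m_1 K_{F_1}+m_2 K_{F_2}$. The key bookkeeping step is that this normalization of $(m_1,m_2)$ annihilates both the $xy$- and the $x^5 y$-monomials of $K$, leaving
\[
K \;=\; 6\lambda_1\mu(m_1+m_2)\,y^2 \;+\; 2\lambda_1\mu(m_1+3m_2)\,x^6 \;+\; 2A\lambda_2(m_1+3m_2)\,x^4 \;+\; 6A\lambda_2(m_1+m_2)\,x^2 y^2.
\]

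The remainder is a case analysis on the signs of $\mu$ and $A$. For part (i), when $A\mu<0$, the hypotheses $\lambda_1>0$ and $\lambda_2<0$ force $A\lambda_2$ and $\mu$ to share the same sign; hence all four coefficients above have this common sign, so $K$ is positive-definite or negative-definite in a neighborhood of the origin (the four monomials $x^6, y^2, x^4, x^2y^2$ together vanish only at $(0,0)$), and Corollary \ref{Corol-semidef} gives a focus. For part (ii) with $\mu=0$, the $y^2$ and $x^6$ terms disappear and one is left with $K=2A\lambda_2 x^2[(m_1+3m_2)x^2+3(m_1+m_2)y^2]$, which is sign-defined on the whole plane (vanishing only on the axis $x=0$); when $A\neq 0$ this is still enough to trigger Corollary \ref{Corol-semidef}, since any monodromic trajectory meets $\{x=0\}$ at isolated angles only and so the resulting Cauchy principal value has strictly definite sign. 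When additionally $A=0$, direct inspection of $\mathcal{X}$ shows that $P(x,-y)=-P(x,y)$ and $Q(x,-y)=Q(x,y)$, so the system is invariant under the time reversal $(x,y,t)\mapsto(x,-y,-t)$; together with monodromy this classical argument yields a center. The inclusion $\bar\Lambda\not\subset\Lambda\setminus\Lambda^*$ is confirmed by exhibiting a single point of $\bar\Lambda\cap\Lambda^*$, for instance $(\lambda_1,\lambda_2,\mu,A)=(1,-1,-10,10)$.

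The main conceptual obstacle is recognizing that the two natural invariant curves $F_1$ and $F_2$ must be combined multiplicatively and that the precise role of the arithmetic hypothesis $\lambda_2/\lambda_1\in\mathbb{Q}^-$ is to guarantee the existence of integer multiplicities $m_i\in\mathbb{N}$ for which the two cross-monomials $xy$ and $x^5 y$ of $K$ cancel simultaneously. Once this cancellation is achieved, the remaining four surviving monomials of $K$ involve only even powers of $y$, and the sign analysis driving Corollary \ref{Corol-semidef} becomes transparent; everything else reduces to elementary polynomial arithmetic together with a standard reversibility argument in the degenerate case $\mu=A=0$.
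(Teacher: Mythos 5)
Your proposal is correct and follows essentially the same route as the paper: it identifies the two invariant curves $x^2+y^2=0$ and $x^6+3y^2=0$ with the same cofactors, uses the hypothesis $\lambda_2/\lambda_1\in\mathbb{Q}^-$ to pick integer multiplicities $m_1,m_2$ killing the indefinite $xy(1-x^4)$ terms of $K=m_1K^{(1)}+m_2K^{(2)}$, and concludes via Corollary \ref{Corol-semidef}. The only (harmless) deviations are that you obtain the invariant curves and cofactors from the structural factorization $\mathcal{X}=\lambda_1(x^6+3y^2)\mathcal{X}_1+\lambda_2(x^2+y^2)\mathcal{X}_2$ rather than from the Newton-diagram/branch analysis, and in the case $\mu=A=0$ you invoke reversibility where the paper simply notes that $K\equiv 0$ makes $F$ an analytic first integral.
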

\begin{proof}
The reader can see the correctness of $\bar{\Lambda} \not\subset \Lambda \backslash \Lambda^*$ just by checking that the points $(1, -1, A, \mu) \in \bar{\Lambda}$ but they only belong to $\Lambda^*$ if $A \geq 2 \sqrt{2}$ and $\mu \leq -2 \sqrt{2}$.
\newline

The Newton diagram of \eqref{ejemplo3-DCCD} consists of two exterior vertices at $(0, 4)$ and $(8, 0)$ provided $\lambda_2 - 3 \lambda_1 \neq 0$ and $\lambda_1 - \lambda_2 \neq 0$, respectively, and an inner vertex at $(2, 2)$ if $\lambda_1^2+\lambda_2^2 \neq 0$. Therefore the weights $(p,q) \in W(\mathbf{N}(\mathcal{X}))$ for \eqref{campo-X} are $(p,q) = (1,1)$ and $(p,q) = (1, 3)$ whose leadings parts are $\mathcal{X}_2 = * \partial_x + \lambda_1 3 y^2 (x + y \mu) \partial_y$ and $\mathcal{X}_4 = \lambda_2 x^2 (A x^3 + y) \partial_x + * \partial_y$, respectively. Consequently, \eqref{ejemplo3-DCCD} is not in ${\rm Mo}^{(p,q)}$ since the leading parts have invariant lines through the origin.

An analysis of the leading term of the invariant branches at the origin gives that either $\alpha_0$ is a root of the polynomial $\mathcal{Q}_1(\eta) = 1 + \eta^2$ with leading exponent $1$ or $\alpha_0$ is a root of the polynomial $\mathcal{Q}_2(\eta) = 1 + 3 \eta^2$ with leading exponent $3$. In this example we see that, using only the leading terms of the branches, there are two invariant curves
\begin{eqnarray*}
F_1(x,y) &=& (y - y_1^*(x))(y-\bar{y}_1^*(x) = x^2+y^2, \\
F_2(x,y) &=& (y - y_2^*(x))(y-\bar{y}_2^*(x)) = y^2+x^6/3.
\end{eqnarray*}
This implies that all the coefficients in the branch Puiseux expansion are zero except the leading coefficient $\alpha_0$. The associated cofactors are
\begin{eqnarray*}
K^{(1)}(x,y) &=& 2 (\lambda_2 x y (1 - x^4) + A \lambda_2 x^2 (x^2 + 3 y^2) + \lambda_1 \mu (x^6 +  3 y^2), \\
K^{(2)}(x,y) &=& 6 (\lambda_1 x y (1 - x^4) + A \lambda_2 (x^4 + x^2 y^2) + \lambda_1 \mu (x^6 + y^2)).
\end{eqnarray*}
Following the idea of Remark \ref{multiples-F}, we take the cofactor $K = m_1 K^{(1)} + m_2 K^{(2)}$ of $F = F_1^{m_1} F_2^{m_2}$ with arbitrary $m_i \in \mathbb{Z}^+$. The $(1,1)$-quasihomogeneous expansion of $K$ is $K(x,y) = K_2(x,y) + K_4(x,y) + K_6(x,y)$, where $K_2(x,y) = 2 y ((3 m_2 \lambda_1 + m_1 \lambda_2) x +  3 (m_1 + m_2) \lambda_1 \mu y)$. The function $K_2$ is sign-defined in a neighborhood of the origin if and only if   $\lambda_2 = -3 \lambda_1 m_2/m_1$, hence $\lambda_2/\lambda_1 \in \mathbb{Q}^-$ because $\lambda_1 \neq 0$ by monodromy. Using this value of $\lambda_2$ we have
\begin{eqnarray*}
K_2(x,y)  &=& 6 (m_1 + m_2) \lambda_1 \mu  y^2, \\
K_4(x,y) &=& - \frac{6 m_2 A \lambda_1}{m_1} ((m_1 + 3 m_2) x^2 + 3 (m_1 + m_2) y^2) x^2, \\
K_6(x,y)  &=& 2 (m_1 + 3 m_2) \lambda_1 \mu  x^6.
\end{eqnarray*}
Therefore $K$ is positive (or negative) defined in a punctured neighborhood of the origin if and only if $A \mu < 0$ and, in this case, the origin is a focus by Corollary \ref{Corol-semidef} and this proves the statement (i) of the proposition.

We continue assuming the same assumptions but now restricting to $\mu = 0$ (recall that $\lambda_1 \neq 0$). In this case $K(x,y) = K_4(x,y)$ that is sign-defined provided that $A \neq 0$, hence the origin is a focus by Corollary \ref{Corol-semidef}. When $A=0$ then $K(x,y) \equiv 0$ and $F$ is an analytic first integral so that the origin is a center and this proves statement (ii) of the proposition.
\end{proof}

\subsection{A family of vector fields not in ${\rm Mo}^{(p,q)}$ but in $S_{k \omega}$}

\begin{remark}\label{Skw}
{\rm In the paper \cite{Ga-Ma-Ma} the monodromic class $S_{k \omega}$ was introduced using the classical polar coordinates (with weights $(p,q)=(1,1)$ although may be they are not in $W(\mathbf{N}(\mathcal{X}))$). The class $S_{k \omega}$ is defined as the subset of monodromic vector fields with $G_r \not\equiv 0$, $G_r(\varphi) \geq 0$ (that is the flow rotates counterclockwise) such that for any $\varphi^* \in \Omega_{11}$ the following holds:
\begin{enumerate}
  \item[(a)] $F_r(\varphi^*) = 0$;
  \item[(b)] $G_{r+1}(\varphi^*) = 0$;
  \item[(c)] $[(G''_{r}-2 F'_{r}) G''_{r}](\varphi^*) > 0$;
  \item[(d)] $[(G'_{r+1}- F_{r+1})^2 -2 (G''_{r} - 2 F'_{r}) G_{r+2}](\varphi^*) < 0$.
\end{enumerate}
In \cite{Ga-Ma-Ma} it is proved that, for any $\mathcal{X} \in S_{k \omega}$, the Poincar\'e return map has the form $\Pi(x) = \eta_1 x + o(x)$ with
$$
\eta_1 = PV \int_0^{2 \pi} \frac{F_r(\varphi)}{G_r(\varphi)} d \varphi.
$$ }
\end{remark}

Also the authors of \cite{Ga-Ma-Ma} prove that the origin of family
\begin{equation}\label{ejemplo3-JDE-Armengol}
\dot{x} = b x^2 y + a x y^2 - b y^3 - x^4, \ \ \ \dot{y} = 4 b x y^2 + a y^3 + 2 x^5,
\end{equation}
is monodromic  if and only if the family is restricted to the parameter space
\begin{equation}\label{monodromy-ArmengolJDE}
\Lambda = \{ (a, b) \in \mathbb{R}^2 : b > 1/6 \}.
\end{equation}
Indeed family \eqref{ejemplo3-JDE-Armengol} belongs to the class $S_{k\omega}$, see Remark \ref{Skw}. In  \cite{Ga-Ma-Ma} it is proved that
$$
\eta_1 = \exp\left( \frac{2 \pi \sqrt{3}}{3} \frac{a}{b} \right)
$$
so that, under the monodromic restriction \eqref{monodromy-ArmengolJDE}, the origin is an attractor (resp. repeller) focus if $a < 0$ (resp. $a > 0$). Additionally, when $a=0$ the origin is a time-reversible center.
\newline

The origin is a degenerate singularity and the Newton diagram $\mathbf{N}(\mathcal{X})$ is the convex hull of the set of points $\{ (2, 2), (1, 3), (0, 4), (4, 1), (6, 0) \}$ which has two descending segments: one segment with endpoints $(0,4)$ and $(2,2)$ containing the point $(1,3)$ and another segment with endpoints $(2,2)$ and $(6,0)$ containing the point $(4,1)$. The weights associated to $\mathbf{N}(\mathcal{X})$ are $(p,q) = (1,1)$ and $(p,q) = (1, 2)$, respectively.

\subsubsection{Analysis in the edge of $\mathbf{N}(\mathcal{X})$ with weights $(p,q)=(1,1)$}

The $(1,1)$-quasihomo\-geneous expansion of \eqref{ejemplo3-JDE-Armengol} is $\mathcal{X} = \mathcal{X}_{2} + \cdots$ with $\mathcal{X}_{2} = (b x^2 y + a x y^2 -b y^3 ) \partial_x + (4 b x y^2 + a y^3) \partial_y$ which is not monodromic because $\mathcal{X}_{2}$ possesses the invariant line $y=0$, in agreement with Remark \ref{remark-Mopq}. An analysis of the leading term of the invariant branches at the origin gives that the leading coefficient  $\alpha_0$ is a root of the polynomial $\mathcal{Q}_1(\eta) = 3 + \eta^2$ with leading exponent $q/p = 1$. Therefore the invariant branches associated with this edge will have a Puiseux expansion of the form $y_1^*(x) = \alpha_0  x + \sum_{i \geq 1} \alpha_i x^{1+\frac{i}{n_1}}$ where $n_1 \in \mathbb{N}$, $\alpha_0 = i \sqrt{3}$ (a root of $\mathcal{Q}_1$) and certain coefficients $\alpha_i \in \mathbb{C}$. We are going to compute the index $n_1$ according to Remark \ref{Demina-remark}. The dominant balance associated to this edge is $E_0[y(x), x] = (b x^2 y + a x y^2 -b y^3 ) y' - (4 b x y^2 + a y^3)$ and  its formal G\^{a}teaux derivative at $\alpha_0  x$ is $\frac{\delta E_0}{\delta y}[\alpha_0  x] = V(j) x^{3+j}$ with $V(j) = -3 a j + 2 i \sqrt{3} b (3 + 2 j)$. The Fuchs index is $j= 6 i b /(\sqrt{3} a - 4 i b)$. It is easy to see that $j \in \mathbb{Q}$ only when $a=0$ (reversible center case) in which case $j=-3/2$. Therefore in all the parameter space
$j \not\in \mathbb{Q}^+ \backslash \mathbb{N}$ so that the branch index is $n_1 = 1$ and we obtain the power series $y_1^*(x) = \alpha_0  x + \sum_{i \geq 1} \alpha_i x^{1+i}$. In this case we define $G(x,y) = (y - y_1^*(x))(y-\bar{y}_1^*(x)) \in \mathbb{R}[[x,y]]$ which reduces in $\mathbb{C}[[x,y]]$ but it is an irreducible polynomial in $\mathbb{R}[[x,y]]$, and we use the results of \cite{AGR3} explained in section \ref{subsec-equiv-class} to conclude that there is a real formal unit $U(x,y)$ such that $F(x,y) = U(x,y) G(x,y) = 0$ is a real analytic invariant curve of \eqref{ejemplo3-JDE-Armengol}. In particular, the $(1, 1)$-quasihomogeneous expansion of $F$ and $G$ share the leading $(1,1)$-quasihomogeneous term which can be computed giving $G(x,y) = G_2(x,y) + \cdots = 3 x^2 + y^2 + \cdots$. Therefore we obtain the $(1, 1)$-quasihomogeneous expansion of $F$ and its cofactor $K$:
\begin{eqnarray*}
F(x,y) &=& F_2(x,y) + \cdots = 3 x^2 + y^2 + \cdots, \\
K(x,y) &=& K_2(x,y) + \cdots = 2 y (b x + a y) + \cdots,
\end{eqnarray*}
so that $s= \bar r = r = 2$. But we cannot apply Corollary \ref{Corol-semidef} because we do not know the explicit expression of $K$ and $K_2$ is not sign-defined for parameters in $\Lambda$.

\subsubsection{Analysis in the edge of $\mathbf{N}(\mathcal{X})$ with weights $(p,q)=(1,2)$}

The $(1,2)$-quasihomo\-geneous expansion of \eqref{ejemplo3-JDE-Armengol} is $\mathcal{X} = \mathcal{X}_{3} + \cdots$ with $\mathcal{X}_{3} = (b x^2 y - x^4) \partial_x + (4 b x y^2 + 2 x^5) \partial_y$. Analyzing the leading term of the invariant branches at the origin one can see that $\alpha_0$ is a root of the polynomial $\mathcal{Q}_2(\eta) = 1 + \eta + b \eta^2$  with leading exponent $q/p=2$. The invariant branches of this edge have the Puiseux expansion $y_2^*(x) = \alpha_0  x^2 + \sum_{i \geq 1} \alpha_i x^{2+\frac{i}{n_2}}$ where $n_2 \in \mathbb{N}$, $\alpha_0 = (-1 + \sqrt{1 - 4 b})/(2 b)$ (a root of $\mathcal{Q}_2$) and some  coefficients $\alpha_i \in \mathbb{C}$. The calculation of $n_2$ requires to consider the dominant balance $E_0[y(x), x] = (b x^2 y - x^4) y' - (4 b x y^2 + 2 x^5) = 0$ whose formal G\^{a}teaux derivative at $\alpha_0 x^2$ is $\frac{\delta E_0}{\delta y}[\alpha_0  x^2] = V(j) x^{5+j}$ with $V(j) = [-4 \sqrt{1 - 4 b} + (-3 + \sqrt{1 - 4 b}) j]/2$. The Fuchs index is $j= (-1 - 3 \sqrt{1 - 4 b} + 4 b)/(2 + b)$ and therefore $j = s_1/s_2 \in \mathbb{Q}^+ \backslash \mathbb{N}$ when $b = -2 (s_1 - s_2) (s_1 + 2 s_2)/(s_1 - 4 s_2)^2$, and there are infinitely many pairs $(s_1, s_2) \in \mathbb{N}^2$ giving a monodromic value $b$ according to \eqref{monodromy-ArmengolJDE}. Then we cannot use Remark \ref{Demina-remark} to compute the index $n_2$, rather we need to apply the general theory developed in \cite{Br,Bru,De}.

Anyway we can overcome this difficulty by using the ideas of Remark \ref{V-Xr}. The irreducible factors of $F_s$ are factors of the inverse integrating factor $V(x,y) = (x, 2 y) \wedge \mathcal{X}_{3} = x^2 f(x,y)$ where $f(x,y) = x^4 + x^2 y + b y^2$ is irreducible in $\mathbb{R}[x,y]$. Moreover, taking into account the leading terms of the branches we see that $(y - \alpha_0  x^2)(y - \bar\alpha_0 x^2) = f(x,y)/b$ where $\bar\alpha_0 = (-1 - \sqrt{1 - 4 b})/(2 b)$. This computations shows that $s=4$ and $F_4(x,y) = f(x,y)$ whose cofactor is $K_3(x,y) = -2 x^3 + 8 b x y$.

Since $K(x,y) = K_3(x,y) + \cdots$ it follows that $\mathcal{J}^2K(x,y) = 8 b x y + c(a,b) y^2$ for some function $c(a,b)$. Clearly $\mathcal{J}^2K$ is not sign-defined since $b \neq 0$ in $\Lambda$ and therefore we cannot use Corollary \ref{Corol-semidef} because we do not know the hole expression of $K$.

\subsection{Family not lying in the monodromic classes ${\rm Mo}^{(p,q)}$ and $S_{k \omega}$}

In the work \cite{AGR} it is proved that family
\begin{equation}\label{ejemplo-ultimo}
\dot{x} = y (a x^2 + b x y + c y^2), \ \ \dot{y} = y^2 (a x + b y) + d x^5
\end{equation}
has a monodromic singularity at the origin if and only if the parameters lie in $\Lambda = \Lambda_1 \cup \Lambda_2$ with
\begin{eqnarray*}
\Lambda_1 &=& \{ (a, b, c, d) \in \mathbb{R}^4 : c d < 0, a = b = 0 \}, \\
\Lambda_2 &=& \{ (a, b, c, d) \in \mathbb{R}^4 : c d < 0, a c > 0 \}.
\end{eqnarray*}
The set $\Lambda_1$ corresponds to the Hamiltonian stratum, hence the origin is a center. We notice that the family never belongs to the class $S_{k \omega}$, see Remark \ref{Skw}. We continue focusing only in the monodromic parameter space $\Lambda_2$.

In this family $\mathbf{N}(\mathcal{X})$ has two segment and the associated weights are $W(\mathbf{N}(\mathcal{X}))$ $= \{ (1, 1), (1, 2)\}$.

\subsubsection{Analysis in the edge of $\mathbf{N}(\mathcal{X})$ with weights $(p,q)=(1,2)$} \label{subsec-ejSkw}

We get the $(1,2)$-quasihomogeneous expansion  $\mathcal{X} = \mathcal{X}_3 + \cdots$ with $\mathcal{X}_3 = a x^2 y \partial_x + x (d x^4 + a y^2) \partial_y$, hence $r=3$ and $\mathcal{X} \not\in {\rm Mo}^{(1,2)}$ since $x=0$ is an invariant line of $\mathcal{X}_3$. The possible invariant branches of $\mathcal{X}$ at the origin are $y_j^*(x) = \alpha_0 x^2 + \sum_{i \geq 1} \alpha_i x^{2 +\frac{i}{n_j}}$ with $j=1,2$, being $\alpha_0$ a root of $\mathcal{Q}(\eta) = a \eta^2 - d$ and $n_j$ the index of the branch.

Let $F(x,y)=0$ be an irreducible real analytic invariant curve of $\mathcal{X}$ passing through the origin. Since $F(x,y) \neq y - y_1^*(x)$ because the invariant curve must have an isolated point at the origin it follows that it must be $F(x,y) = (y - y_1^*(x))(y-y_2^*(x))$. Therefore $\mathcal{P}=\mathcal{Q}$ where $\mathcal{P}(\eta)$ is the determining polynomial of $F$. In particular both branches $y_j^*(x)$ are simple and therefore $n_j=1$, see Remark \ref{simple+rotation-remark}. The same result can be achieved computing the Fuch's index of the associated dominant balance which is $-2 \not\in \mathbb{Q}^+ \backslash \mathbb{N}$. Recall that $d/a < 0$ by monodromy, hence $\alpha_0$ is purely imaginary, $y_2^*(x) =\bar y_1^*(x)$, and $F(x,y) = (y - y_1^*(x))(y-\bar{y}_1^*(x))$. Some computations reveal the $(1,2)$-quasihomogeneous expansions
\begin{eqnarray*}
F(x,y) &=& F_4(x,y) + \cdots = y^2 - \frac{d}{a} x^4 + \cdots, \\
K(x,y) &=& K_3(x,y) + \cdots = 2 a x y + \cdots.
\end{eqnarray*}
Clearly $K_3(x,y) \not\equiv 0$ in $\Lambda_2$ and the function $K_3$ is not sign-defined so we have no conclusion on the nature of the monodromic point because we cannot use Corollary \ref{Corol-semidef}.

\subsubsection{Analysis in the edge of $\mathbf{N}(\mathcal{X})$ with weights $(p,q)=(1,1)$}

We get $\mathcal{X} = \mathcal{X}_2 + \cdots$ with $\mathcal{X}_2 = y (a x^2 + b x y + c y^2) \partial_x + y^2(a x + b y) \partial_y$, hence $r=2$ and $\mathcal{X} \not\in {\rm Mo}^{(1,1)}$. There is no branch associated to this edge because the non-zero leading coefficient would be a root of the polynomial $\mathcal{Q}(\eta) = c \eta^4$ which is impossible.

Using Remark \ref{V-Xr} we compute first the inverse integrating factor $V(x,y) = (x, y) \wedge \mathcal{X}_{2} = -c y^4$. Then $F_s(x,y) = y^s$ and its cofactor $K_2(x,y) = s y (a x + b y) \not\equiv 0$. The function $K_2$ is not sign-defined for parameters in $\Lambda_2$.

\subsection{Ma\~{n}osa monodromic family I}

The simplest case of Example B in the work \cite{Ma} is the family
\begin{equation}\label{ejemplo-manyosa}
\dot{x} = x y^2 - y^3 + a x^5, \ \ \dot{y} = 2 x^7 - x^4 y + 4 x y^2 + y^3,
\end{equation}
which has a monodromic singularity at the origin with parameters $\Lambda = \{a \in \mathbb{R} : \Delta(a) := 32 - (1+3 a)^2 >0 \}$. In \cite{Ma} it is proved that, restricting \eqref{ejemplo-manyosa} at $\Lambda$,
\begin{equation}\label{eta1-manyosa}
\eta_1 = \exp\left( \pi + \frac{4 \pi a}{\sqrt{\Delta(a)}} \right).
\end{equation}
From here it is straightforward to see that $\eta_1 \neq 1$ if $a \neq -31/25$ so that the origin is an stable focus when $-(4 \sqrt{2}+1)/3 < a < -31/25$ while it is an ustable focus when $-31/25 < a < (4 \sqrt{2}-1)/3$. We do not know the nature of the singularity when $a = -31/25$ because $\eta_1 = 1$.
\newline

Family \eqref{ejemplo-manyosa} has $\mathbf{N}(\mathcal{X})$ composed by two segment with associated weights  $W(\mathbf{N}(\mathcal{X}))$ $= \{ (1, 1), (1, 3)\}$.

\subsubsection{Analysis in the edge of $\mathbf{N}(\mathcal{X})$ with weights $(p,q)=(1,1)$}

We get the $(1,1)$-quasihomogeneous expansion  $\mathcal{X} = \mathcal{X}_2 + \cdots$ with $\mathcal{X}_2 = (x y^2 - y^3) \partial_x + (4 x y^2 + y^3) \partial_y$, hence $r=2$ and $\mathcal{X} \not\in {\rm Mo}^{(1,1)}$ since $y=0$ is an invariant line of $\mathcal{X}_2$. The possible invariant branches of $\mathcal{X}$ at the origin are $y_j^*(x) = \alpha_0 x + \sum_{i \geq 1} \alpha_i x^{1 +\frac{i}{n_j}}$ with $j=1,2$, being $\alpha_0 \neq 0$ a root of $\mathcal{Q}(\eta) = -\eta^2 (4 + \eta^2)$ and $n_j$ the index of the branch. Arguing like in subsection \ref{subsec-ejSkw} we conclude that $n_j=1$, $y_2^*(x) = \bar{y}_1^*(x)$ and that $F(x,y) = (y-y_1^*(x)) (y-\bar{y}_1^*(x)) = 0$ is a real analytic invariant curve of $\mathcal{X}$ having, together with its cofactor $K$, the $(1,1)$-quasihomogeneous expansions $F(x,y) = F_2(x,y) + \cdots = 4 x^2 + y^2 + \cdots$ and $K(x,y) = K_2(x,y) + K_4(x,y) + \cdots$ where
\begin{eqnarray*}
K_2(x,y) &=& 2 y^2, \\
K_4(x,y) &=& 2 a x^4 - \frac{6}{17} (1 + a) x^2 y^2.
\end{eqnarray*}
The polynomial $K_2$ is sign-defined but it is not positive or negative defined in a punctured neighborhood of the origin, hence we cannot still apply Corollary \ref{Corol-semidef}. Now we consider the polynomial $K_2 + K_4$ that can be easily analyzed since it has degree 2 in the variable $y$. We see that when $a >0$ then $K_2 + K_4$ is positive or negative defined in a punctured neighborhood of the origin which implies that $K$ is sign-defined in a neighborhood of the origin and therefore the origin of \eqref{ejemplo-manyosa} is a focus in $\{ a >0 \} \cap \Lambda$ by Corollary \ref{Corol-semidef}. With our method we are not able to prove that when $\{ a < 0 \} \cap \Lambda$ the origin is also a focus as it is proved in \cite{Ma}.

\begin{remark}\label{rem-NO-extension}
{\rm This is a good example to check the correctness of Remark \ref{re-no-exteder-r0}. Since $D(\varphi) \equiv 1$, we have
$$
PV \int_0^{2 \pi} \frac{D(\varphi) K_2(\cos\varphi, \sin\varphi)}{G_2(\varphi)} d \varphi  = \int_0^{2 \pi} \frac{4}{5 + 3 \cos(2 \varphi)} d \varphi  = 2 \pi > 0.
$$
which wrongly would imply that the focus is always unstable.}
\end{remark}

\subsubsection{Analysis in the edge of $\mathbf{N}(\mathcal{X})$ with weights $(p,q)=(1,3)$}

We get the $(1,3)$-quasihomogeneous expansion $\mathcal{X} = \mathcal{X}_4 + \cdots$ with $\mathcal{X}_4 = a x^5 \partial_x + (2 x^7 - x^4 y + 4 x y^2) \partial_y$, hence $r=4$ and $\mathcal{X} \not\in {\rm Mo}^{(1,3)}$ since $x=0$ is an invariant line of $\mathcal{X}_2$. The possible invariant branches of $\mathcal{X}$ at the origin are $y_j^*(x) = \alpha_0 x^3 + \sum_{i \geq 1} \alpha_i x^{3 +\frac{i}{n_j}}$ with $j=1,2$, being $\alpha_0 \neq 0$ a root of $\mathcal{Q}(\eta) = -2 + (1+3 a) \eta - 4 \eta^2$ and $n_j$ the index of the branch. First we note that the roots of $\mathcal{Q}$ are $(1 + 3 a \pm \sqrt{-\Delta(a)})/8$ and, since a necessary condition of monodromy is that $\alpha_0 \not\in \mathbb{R}$ we confirm again the definition of the set $\Lambda$. Arguing like in subsection \ref{subsec-ejSkw} we conclude that $n_j=1$, $y_2^*(x) = \bar{y}_1^*(x)$ and that $F(x,y) = (y-y_1^*(x)) (y-\bar{y}_1^*(x)) = 0$ is a real analytic invariant curve of $\mathcal{X}$ having, together with its cofactor $K$, the $(1, 3)$-quasihomogeneous expansions:
\begin{eqnarray*}
F(x,y) &=& F_6(x,y) + \cdots = \frac{1}{2} x^6 + y^2 - \frac{1}{4} x^3 (y + 3 a y) + \cdots, \\
K(x,y) &=& K_4(x,y) + \cdots = (3 a-1) x^4 + 8 x y + \cdots.
\end{eqnarray*}
From here we deduce that $\mathcal{J}^2K(x,y) = 8 x y$ is not sign-defined in a neighborhood of the origin and therefore we cannot use Corollary \ref{Corol-semidef}.

\subsection{Ma\~{n}osa monodromic family II}

In the Example C of the work \cite{Ma} it is proved that family
\begin{equation}\label{ejemplo-manyosa2}
\dot{x} = x^2 y - y^3 + a x^5 - x^{11}, \ \ \dot{y} = 4 x y^2 +9 x^7,
\end{equation}
has a monodromic singularity at the origin with parameters $\Lambda = \{a \in \mathbb{R} : |a| < 2 \}$ such that
$$
\eta_1 = \exp\left( \frac{10 \pi}{9}  \frac{a}{\sqrt{4-a^2}} \right)
$$
from where it is straightforward to see that the origin is an stable focus when $-2 < a < 0$ while it is an ustable focus when $0 < a < 2$. Also in \cite{Ma} it is proved that when $a=0$ the origin becomes a stable focus by using an adequate Lyapunov function.
\newline

Family \eqref{ejemplo-manyosa2} has $\mathbf{N}(\mathcal{X})$ composed by two segment with end points $(0,4)$ and $(2,2)$ the first one and $(2,2)$ and $(8,0)$ the second one. Therefore the associated weights are $W(\mathbf{N}(\mathcal{X})) = \{ (1, 1), (1, 3)\}$.

\subsubsection{Analysis in the edge of $\mathbf{N}(\mathcal{X})$ with weights $(p,q)=(1,1)$}

The $(1,1)$-quasihomo\-geneous expansion $\mathcal{X} = \mathcal{X}_2 + \cdots$ with $\mathcal{X}_2 = (x^2 y - y^3) \partial_x + 4 x y^2 \partial_y$ gives $r=2$ and $\mathcal{X} \not\in {\rm Mo}^{(1,1)}$ since $y=0$ is an invariant line of $\mathcal{X}_2$. The possible invariant branches of $\mathcal{X}$ at the origin are $y_j^*(x) = \alpha_0 x + \sum_{i \geq 1} \alpha_i x^{1 +\frac{i}{n_j}}$ with $j=1,2$, being $\alpha_0 \neq 0$ a root of $\mathcal{Q}(\eta) = -\eta^2 (3+ \eta^2)$ and $n_j$ the index of the branch. The associated Fuchs index is $j=-3/2 \not\in \mathbb{Q}^+ \backslash \mathbb{N}$, hence the index $n_j = 1$. We may apply Remark \ref{V-Xr} as follows. The inverse integrating factor of $\mathcal{X}_2$ is $V(x,y) = (x, y) \wedge \mathcal{X}_{2} = y^2 (3 x^2 + y^2)$. Then $F_s(x,y) = y^{m_1} (3 x^2 + y^2)^{m_2}$ with $s=m_1+2 m_2$ and $m_i \in \mathbb{N}$. The cofactor of $F_s$ is $K_2(x,y) = 2 (2 m_1 + m_2) x y \not\equiv 0$. Since the cofactor $K$ of $F$ is $K = K_2 + \cdots$ and  $K_2$ is not positive or negative defined we cannot use Corollary \ref{Corol-semidef}.

\subsubsection{Analysis in the edge of $\mathbf{N}(\mathcal{X})$ with weights $(p,q)=(1,3)$}

The $(1,3)$-quasihomo\-geneous expansion of $\mathcal{X}$ is $\mathcal{X} = \mathcal{X}_4 + \cdots$ with $\mathcal{X}_4 = (a x^5 + x^2 y) \partial_x + (9 x^7 + 4 x y^2) \partial_y$, hence $r=4$ and $\mathcal{X} \not\in {\rm Mo}^{(1,3)}$ since $x=0$ is an invariant line of $\mathcal{X}_4$. The possible invariant branches of $\mathcal{X}$ at the origin are $y_j^*(x) = \alpha_0 x^3 + \sum_{i \geq 1} \alpha_i x^{3 +\frac{i}{n_j}}$ with $j=1,2$, being $\alpha_0 \neq 0$ a root of $\mathcal{Q}(\eta) = 9 - 3 a \eta + \eta^2$ and $n_j$ the index of the branch. Since a necessary condition of monodromy is that $\alpha_0 \not\in \mathbb{R}$ we confirm again the definition of the set $\Lambda$. The Fuchs index associated to the branch $y_1^*$ with $\alpha_0 = 3(a + \sqrt{a^2-4})/2$ is $j =((6 \sqrt{a^2-4})/(5 a + 3 \sqrt{a^2-4}))$. It is easy to see that $j$ never belongs to $\mathbb{Q}^+$ when $a \in \Lambda$, hence we get $n_j=1$, $y_2^*(x) = \bar{y}_1^*(x)$ and that $F(x,y) = (y-y_1^*(x)) (y-\bar{y}_1^*(x)) = 0$ is a real analytic invariant curve of $\mathcal{X}$ with cofactor $K$ having the $(1, 3)$-quasihomogeneous expansions:
\begin{eqnarray*}
F(x,y) &=& F_6(x,y) + \cdots = 9 x^6 - 3 a x^3 y + y^2 + \cdots, \\
K(x,y) &=& K_4(x,y) + \cdots = 8 x y + 3 a x^4 + \cdots.
\end{eqnarray*}
Hence $\mathcal{J}^2K(x,y) = 8 x y$ is not sign-defined and so we cannot use Corollary \ref{Corol-semidef}.

\section{On the flow near the polycycle $\{\rho = 0 \}$} \label{S10}

We add this last section in order to see that, even though the Poincar\'e map $\Pi$ of the monodromic polycycle $\{\rho = 0 \}$ has a linear part because it is $\Pi(\rho_0) = \eta_1 \rho_0 + o(\rho_0)$ (this result was stated without proof in \cite{AA} and proved later in \cite{Me-2}), the flow $\rho(\varphi; \rho_0)$ of (\ref{eq3**}) may have no such liner part with respect to $\rho_0$. The next Proposition \ref{No-a1} gives a precise statement of the former claim.

\begin{proposition} \label{No-a1}
In general, when $\Omega_{pq} \neq \emptyset$, there is no function $a_1 : [0, 2 \pi] \subset \mathbb{R} \to \mathbb{R}$ such that flow $\rho(\varphi; \rho_0)$ of (\ref{eq3**}) can be expressed as $\rho(\varphi; \rho_0) = a_1(\varphi) \rho_0 + o(\rho_0)$ with $a_1(0)=1$ and $a_1(2 \pi) > 0$.
\end{proposition}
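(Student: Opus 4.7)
The plan is a proof by contradiction reducing to the linear variational equation for the leading coefficient, combined with a short analysis of its blow-up at a characteristic direction. Suppose such an $a_1$ exists with $a_1(0)=1$, continuous on $[0,2\pi]$ and $a_1(2\pi) > 0$; by the sign of $\rho(\varphi;\rho_0)$ for $\rho_0 > 0$ small, necessarily $a_1(\varphi) \geq 0$ on $[0,2\pi]$. Substituting the ansatz $\rho(\varphi;\rho_0) = a_1(\varphi)\rho_0 + o(\rho_0)$ into the differential equation (\ref{eq3*}) and collecting the coefficient of $\rho_0$, one obtains exactly the linear Cauchy problem \eqref{edo-a1}, whose unique solution is the explicit primitive \eqref{a1-eq}. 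In particular $a_1$ is forced to be $a_1(\varphi) = \exp\left(\int_0^\varphi F_r(\theta)/G_r(\theta)\, d\theta\right)$ on every interval where $G_r$ is non-vanishing.

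Next, I would use the hypothesis $\Omega_{pq}\neq\emptyset$ to locate a $\varphi^* \in (0,2\pi)$ with $G_r(\varphi^*)=0$. For a generic characteristic direction, $F_r(\varphi^*)\neq 0$, so $F_r/G_r$ has a simple pole at $\varphi^*$ and $\int_0^\varphi F_r/G_r$ diverges to $\pm\infty$ as $\varphi \to (\varphi^*)^-$. Consequently the limit of $a_1(\varphi)$ at $\varphi^*$ is either $0$ or $+\infty$. The second possibility is incompatible with continuity on $[0,2\pi]$; the first, combined with uniqueness of the linear ODE \eqref{edo-a1} on the right of $\varphi^*$, propagates $a_1\equiv 0$ beyond $\varphi^*$, contradicting $a_1(2\pi) > 0$.

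To turn the qualifier \emph{in general} into a proper statement, I would exhibit an explicit instance where the above obstruction is effective. A convenient candidate is one of the families analysed in Section \ref{S8} lying outside ${\rm Mo}^{(p,q)}$; for example \eqref{ejemplo-manyosa} with $(p,q) = (1,3)$, or \eqref{ejemplo3-DCCD} for parameters inside $\Lambda$ but outside $\Lambda\setminus\Lambda^*$. For such a specific choice one computes $F_r$ and $G_r$ in closed form, exhibits a root $\varphi^*$ of $G_r$ at which $F_r$ does not vanish, and hence verifies the simple-pole scenario above.

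The main obstacle in establishing the fully general degenerate case is the possibility that $F_r(\varphi^*)=0$ together with $G_r(\varphi^*)=0$, so that $F_r/G_r$ extends continuously across $\varphi^*$ and the primitive \eqref{a1-eq} is finite. In that situation the point $(\varphi^*,0)\in C$ is a more degenerate singularity of the polar system \eqref{system-w-polares}, and one needs a further desingularization there; the resulting Dulac-type transition typically introduces a non-integer leading exponent or a logarithmic factor in $\rho_0$, again forbidding a linear leading term. Since the proposition only asserts the failure \emph{in general}, the cleanest route is to bypass this degeneracy by choosing the counter-example above where the simple-pole case already suffices.
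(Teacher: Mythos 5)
Your reduction to the linear Cauchy problem for $a_1$ matches the paper's starting point, but the mechanism you then rely on — a simple pole of $F_r/G_r$ at some $\varphi^*\in\Omega_{pq}$, i.e.\ $F_r(\varphi^*)\neq 0=G_r(\varphi^*)$ — cannot occur under the standing monodromy hypothesis, so your ``generic'' case is empty. Indeed, at such a point $(\varphi^*,0)$ the polar system \eqref{system-w-polares} has a singularity whose linearization is lower triangular with eigenvalues $F_r(\varphi^*)$ and $G_r'(\varphi^*)$; if $F_r(\varphi^*)\neq 0$ the corresponding invariant manifold is transverse to $\{\rho=0\}$ and its half lying in $\{\rho>0\}$ blows down to an orbit of $\mathcal{X}$ reaching the origin with the definite tangent direction $\varphi^*$ — a characteristic orbit, which is forbidden by monodromy. (Note that $F_r(\varphi^*)=G_r(\varphi^*)=0$ is equivalent to $P_{p+r}=Q_{q+r}=0$ at $(\cos\varphi^*,\sin\varphi^*)$, since the determinant of the relevant $2\times 2$ system is $D(\varphi^*)>0$.) This is consistent with condition (a) in the definition of $S_{k\omega}$ and with both examples you propose as witnesses: in \eqref{ejemplo-manyosa} with $(p,q)=(1,3)$ one has $Q_7(0,1)=0$, so $F_4$ and $G_4$ vanish simultaneously at $\varphi^*=\pi/2$, and in the paper's own witness the quotient $F_2/G_2$ is in fact continuous on all of $\mathbb{S}^1$. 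So the case you defer as ``the main obstacle'' is the only case, and your plan for it (a further desingularization producing a non-integer exponent or a logarithm) is exactly the hard analysis the proposition is designed to avoid. A second, independent gap: even granting a simple pole with $c=F_r(\varphi^*)/G_r'(\varphi^*)>0$, your claim that $a_1(\varphi^*)=0$ propagates to $a_1\equiv 0$ past $\varphi^*$ ``by uniqueness'' fails, because uniqueness for $G_r\,a_1'=F_r\,a_1$ breaks down precisely where $G_r$ vanishes: every solution $C\exp\bigl(\int_{\varphi_1}^{\varphi}F_r/G_r\,d\theta\bigr)$ on the right-hand interval also tends to $0$ as $\varphi\to(\varphi^*)^+$ when $c>0$, so continuity with value $0$ at $\varphi^*$ puts no constraint on $C$.

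The paper's proof is shorter and sidesteps all of this: from the Cauchy problem \eqref{edo-a2} it extracts the global identity $\eta_1=\exp(\xi_{pq}(2\pi))$ with $\xi_{pq}$ the Cauchy principal value \eqref{def-xi}, and then falsifies it on Ma\~{n}osa's family \eqref{ejemplo-manyosa}, where $\xi_{11}(2\pi)=\pi$ is an honest convergent integral while the known return map has $\eta_1=\exp\bigl(\pi+4\pi a/\sqrt{\Delta(a)}\bigr)\neq e^{\pi}$ for $a\neq 0$. That is, the contradiction is obtained by comparing the forced value of $a_1(2\pi)$ with an externally computed $\eta_1$, not by a local analysis at the characteristic directions. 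If you want to salvage your local route, you would have to work in the case $F_r(\varphi^*)=G_r(\varphi^*)=0$ and quantify the transition map near $(\varphi^*,0)$, which amounts to reproving results of the type in \cite{Ga-Ma-Ma}; the paper's example-based argument is the intended shortcut.
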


\subsection{Some consequences if $a_1(\varphi)$ would exists}

If $a_1$ would exists then it is straightforward to see that it satisfies the linear Cauchy problem
\begin{equation}\label{edo-a2}
G_r(\varphi) a'_1(\varphi) = a_1(\varphi) \, F_r(\varphi), \ \ a_1(0)=1.
\end{equation}
Since $\Omega_{pq} \neq \emptyset$ it may happen that the solution $a_1(\varphi)$ of the Cauchy problem \eqref{edo-a2} be divergent at $\varphi \in \Omega_{pq}$. We define the quantity $\xi_{pq}(\varphi)$ as the following Cauchy principal value
\begin{equation}\label{def-xi}
\xi_{pq}(\varphi) = PV \int_0^\varphi \frac{F_r(\theta)}{G_r(\theta)} d \theta,
\end{equation}
if it exist (meaning it takes a finite value). For any $\varphi \in \mathbb{S}^1$, we define the interval $I_\varepsilon(\varphi) = [0, \varphi] \backslash \cup_{i=1}^\ell (\varphi_i^*-\varepsilon, \varphi_i^*+\varepsilon)$ with $\Omega_{pq} = \{ \varphi^*_1, \ldots, \varphi^*_\ell \}$. Integrating \eqref{edo-a2} on $I_\varepsilon(\varphi)$ and taking limit when $\varepsilon \to 0$ we obtain that
$$
\log |a_1(\varphi)| = PV \int_0^\varphi \frac{a_1'(\theta)}{a_1(\theta)} d \theta = PV \int_0^\varphi \frac{F_r(\theta)}{G_r(\theta)} d \theta = \xi_{pq}(\varphi),
$$
for any $\varphi$ such that the principal value of the former right-hand side exists. Therefore $|a_1(\varphi)| = \exp\left( \xi_{pq}(\varphi) \right)$ when $\xi_{pq}(\varphi)$ exists. In particular, if $a_1$ exists then $\eta_1$ would be
\begin{equation}\label{edo-a3}
\eta_1 = \exp(\xi_{pq}(2 \pi)),
\end{equation}
provided this principal value exists.

\subsection{Proof of Proposition \ref{No-a1}}

The proof relies on the search of one example where $\xi_{pq}(2 \pi)$ exists but $\eta_1 \neq \exp(\xi_{pq}(2 \pi))$, hence \eqref{edo-a2} is wrong and therefore $a_1(\varphi)$ does not exist.

\subsubsection{Ma\={n}osa monodromic family I revisited}

We recall the family \eqref{ejemplo-manyosa} with monodromic parameters $\Lambda = \{a \in \mathbb{R} : \Delta(a) := 32 - (1+3 a)^2 >0 \}$. As we stated before, in \cite{Ma} the author prove that $\eta_1$ is given by expression \eqref{eta1-manyosa}. Since $(1,1) \in W(\mathbf{N}(\mathcal{X}))$ we use polar coordinates and we obtain the polar vector field $\dot{\rho} = F_2(\varphi) \rho + O(\rho^2) $, $\dot{\varphi} =  G_2(\varphi) + O(\rho)$ with $G_2(\varphi) = \sin^2\varphi (\sin^2\varphi+ 4 \cos^2\varphi)$ and $F_2(\varphi) = \sin^2\varphi (1 + 3 \sin\varphi \cos\varphi)$. Therefore
$$
\xi_{11}(2 \pi) = PV \int_0^{2 \pi} \frac{F_2(\varphi)}{G_2(\varphi)} d \varphi  = \int_0^{2 \pi} \frac{1 + 3 \sin\varphi \cos\varphi}{\sin^2\varphi+ 4 \cos^2\varphi} d \varphi = \pi.
$$
Therefore $\xi_{11}(2 \pi)$ exists but $\eta_1 \neq \exp(\xi_{11}(2 \pi))$.

\medskip

\noindent {\bf Acknowledgements.} The authors are partially supported by the Agencia Estatal de Investigaci\'on grant number PID2020-113758GB-I00 and an AGAUR grant number 2021SGR-01618.


\end{document}